\DeclareMathOperator{\Hom}{Hom}
\DeclareMathOperator{\Img}{Im}
\DeclareMathOperator{\Ker}{Ker}
\DeclareMathOperator{\trg}{trg}
\DeclareMathOperator{\gr}{gr}
\DeclareMathOperator{\Val}{Val}
\renewcommand{\BibLabel}{
    \Hy@raisedlink{\hyper@anchorstart{cite.\CurrentBib}\hyper@anchorend}%
    [\thebib]%
}
\begin{document}
\title{}
\author{}

\newtheorem{thm}{Theorem}[section]
\newtheorem{cor}[thm]{Corollary}
\newtheorem{lem}[thm]{Lemma}
\newtheorem{prop}[thm]{Proposition}
\newtheorem{defin}[thm]{Definition}
\newtheorem{exam}[thm]{Example}
\newtheorem{examples}[thm]{Examples}
\newtheorem{rem}[thm]{Remark}
\newtheorem*{thmA}{Theorem A}
\newtheorem*{thmA'}{Theorem A'}
\newtheorem*{thmB}{Theorem B}
\swapnumbers
\newtheorem{rems}[thm]{Remarks}
\newtheorem*{acknowledgment}{Acknowledgment}
\numberwithin{equation}{section}
\newcommand\Lref[1]{{Lemma~\ref{#1}}}%
\newcommand\Pref[1]{{Proposition~\ref{#1}}}%
\newcommand\Cref[1]{{Corollary~\ref{#1}}}%
\newcommand\Rref[1]{{Remark~\ref{#1}}}%
\newcommand\Tref[1]{{Theorem~\ref{#1}}}%
\newcommand\Dref[1]{{Definition~\ref{#1}}}%
\newcommand\Sref[1]{{Section~\ref{#1}}}%
\newcommand\Ssref[1]{{Subsection~\ref{#1}}}%
\newcommand{\dec}{\mathrm{dec}}
\newcommand{\dirlim}{\varinjlim}
\newcommand{\nek}{,\ldots,}
\newcommand{\proj}{{\rm proj}}
\newcommand{\inv}{^{-1}}
\newcommand{\isom}{\cong}
\newcommand{\Massey}{\mathrm{Massey}}
\newcommand{\ndiv}{\hbox{$\,\not|\,$}}
\newcommand{\pr}{\mathrm{pr}}
\newcommand{\tensor}{\otimes}
\newcommand{\U}{\mathrm{U}}
\newcommand{\ZX}{\dbZ\langle\langle X\rangle\rangle}
\newcommand{\alp}{\alpha}
\newcommand{\gam}{\gamma}
\newcommand{\Gam}{\Gamma}
\newcommand{\del}{\delta}
\newcommand{\eps}{\epsilon}
\newcommand{\lam}{\lambda}
\newcommand{\Lam}{\Lambda}
\newcommand{\sig}{\sigma}
\newcommand{\dbF}{\mathbb{F}}
\newcommand{\dbR}{\mathbb{R}}
\newcommand{\dbU}{\mathbb{U}}
\newcommand{\dbZ}{\mathbb{Z}}
\newcommand{\gra}{\mathfrak{a}}
\newcommand{\grc}{\mathfrak{c}}
\newcommand{\grd}{\mathfrak{d}}
\newcommand{\grD}{\mathfrak{D}}

\title[Filtrations of free groups]{Filtrations of free groups arising from the lower central series}

\author{Michael Chapman and Ido Efrat}
\address{Department of Mathematics\\
Ben-Gurion University of the Negev\\
P.O.\ Box 653, Be'er-Sheva 84105\\
Israel} \email{efrat@math.bgu.ac.il, michael169chapman@gmail.com}

\thanks{This work was supported by the Israel Science Foundation (grants No.\ 23/09 and 152/13).
It is partially based on the M.Sc.\ thesis of the first author done under the supervision of the second author in Ben-Gurion University of the Negev.}

\keywords{filtrations, free groups, lower central filtration, lower $p$-central filtration, $p$-Zassenhaus filtration, Magnus theory, upper-triangular unipotent representations, Massey products}

\subjclass[2010]{Primary 20F14, Secondary 20E05, 20H25, 20F40}

\begin{abstract}
We make a systematic study of filtrations of a free group $F$ defined as products of powers of the lower central series of $F$.
Under some assumptions on the exponents, we characterize these filtrations in terms of the group algebra, the Magnus algebra of non-commutative power series, and linear representations by upper-triangular unipotent matrices.
These characterizations generalize classical results of Gr\"un, Magnus, Witt, and Zassenhaus from the 1930's, as well as later results on the lower $p$-central filtration and the $p$-Zassenhaus filtrations.
We derive alternative recursive definitions of such filtrations, extending results of Lazard.
Finally, we relate these filtrations to Massey products in group cohomology.
\end{abstract}

 \dedicatory{To the memory of O.V.\ Melnikov}

\maketitle

\section{Introduction}
Given a group $G$,  we denote its \textsl{lower central filtration} by $G^{(n,0)}$, $n=1,2, \ldots$ .
Thus  $G^{(1,0)}=G$ and $G^{(n+1,0)}=[G^{(n,0)},G]$ for $n\geq1$.
Let $F=F_X$ be the free group on a finite basis $X$.
Classical results from the 1930's give the following three alternative descriptions of $F^{(n,0)}$.

\smallskip

(I)$^0$ \ \textsl{Power series description} (Magnus \cite{Magnus37}*{p.\ 111}): \quad
Let $\dbZ\langle\langle X\rangle\rangle$ be the ring of all formal power series over the set $X$ of non-commuting variables and with coefficients in $\dbZ$.
Let $\grd_\dbZ$ be the ideal in $\dbZ\langle\langle X\rangle\rangle$ generated by $X$, and define the \textsl{Magnus homomorphism} $\mu_\dbZ\colon F\to \dbZ\langle\langle X\rangle\rangle^\times$ by $\mu_\dbZ(x)=1+x$ for $x\in X$.
Then $F^{(n,0)}=\mu_\dbZ\inv(1+\grd_\dbZ^n)$.

\smallskip

(II)$^0$ \ \textsl{Group algebra description} (\cite{Magnus37} and Witt \cite{Witt37}): \quad
 $F^{(n,0)}=F\cap(1+\grc_\dbZ^n)$, where  $\grc_\dbZ$ is the augmentation ideal in the group algebra $\dbZ[F]$.

\smallskip

(III)$^0$ \ \textsl{Description by unipotent matrices} (Gr\"un \cite{Grun36}; see also Magnus \cite{Magnus35}): \quad
$F^{(n,0)}$ is the intersection of all kernels of homomorphisms $F\to \dbU_n(\dbZ)$, where
$\dbU_n(\dbZ)$ is the group of all upper-triangular unipotent $n\times n$ matrices over $\dbZ$
(this is somewhat implicit in Gr\"un's paper, and we refer to R\"ohl \cite{Rohl85} for a more modern exposition, where this is shown for \textsl{lower}-triangular matrices).

For a detailed history of these results see \cite{ChandlerMagnus82}.
These descriptions of $F^{(n,0)}$ have natural analogs in the context of free profinite groups, where one considers free profinite groups, complete group rings, and continuous homomorphisms.

\medskip

Next, for a prime number $p$, one defines the \textsl{lower $p$-central filtration} $G^{(n,p)}$, $n=1,2\nek$ of a group $G$ by
\[
G^{(n,p)}=\prod_{i=1}^n(G^{(i,0)})^{p^{n-i}}.
\]
Alternatively, it has the following inductive definition  \cite{NeukirchSchmidtWingberg}*{Prop.\ 3.8.6}:
\[
G^{(1,p)}=G, \qquad G^{(n+1,p)}=(G^{(n,p)})^p[G^{(n,p)},G] \hbox{ \ for } n\geq1.
\]
For a free group $F=F_X$ this filtration has the following analogs of (I)$^0$--(III)$^0$:

\smallskip

(I)$^p$ \ As shown by Skopin \cite{Skopin50} (for $p\neq2$) and Koch \cite{Koch60},
\[
F^{(n,p)}=\mu_\dbZ\inv(1+(p\dbZ\langle\langle X\rangle\rangle+\grd_\dbZ)^n).
\]

(II)$^p$ \
By a result of Koch \cite{Koch02}*{Th.\ 7.14} (who however works in a pro-$p$ context),
\[
F^{(n,p)}=F\cap(1+(p\dbZ[F]+\grc_\dbZ)^n).
\]

(III)$^p$ \ $F^{(n,p)}$ is the intersection of all kernels of homomorphisms $F\to\dbU_{d+1}(\dbZ/p^{n-d}\dbZ)$, where $1\leq d\leq n-1$ (Min\'a\v c and T\^an \cite{MinacTan15}*{\S2});
Alternatively, $F^{(n,p)}$ is the intersection of all kernels of homomorphisms $F\to G(n,p)$, where $G(n,p)$ is the group of all upper-triangular unipotent $n\times n$-matrices $(c_{ij})$ over $\dbZ/p^n\dbZ$ such that $c_{ij}\in p^{j-i}\dbZ/p^n\dbZ$ for every $i\leq j$ \cite{Efrat14b}.

\medskip

A third well-studied filtration of a group $G$ of a similar nature is the \textsl{$p$-Zassenhaus filtration} $G_{(n,p)}$, $n=1,2\nek$ where $p$ is a prime number (see \cite{Zassenhaus39}).
Here
\[
G_{(n,p)}=\prod_{i=1}^n(G^{(i,0)})^{p^{\lceil\log_p(n/i)\rceil}}=\prod_{ip^j\geq n}(G^{(i,0)})^{p^j}.
\]
By a result of Lazard, it has the following alternative inductive definition (see \cite{Lazard65}*{p.\ 209, (3.14.5)},  \cite{DixonDuSautoyMannSegal99}*{Th.\ 11.2}):
\[
G_{(1,p)}=G, \qquad G_{(n,p)}=(G_{(\lceil n/p\rceil,p}))^p\prod_{i+j=n}[G_{(i,p)},G_{(j,p)}] \hbox{ \  for } n\geq2.
\]
For this filtration one has the following analogs of (I)$^0$--(III)$^0$:

\smallskip

(I)$_p$ \
In the pro-$p$ case $F_{(n,p)}=\mu_{\dbF_p}\inv(1+\grd_{\dbF_p}^n)$, where  $\mu_{\dbF_p}\colon F\to\dbF_p\langle\langle X\rangle\rangle^\times$ and $\grd_{\dbF_p}$ are defined similarly to (I)$^0$
(see Morishita \cite{Morishita12}*{\S8.3}), Vogel \cite{Vogel05}*{Th.\ 2.19(ii)}, or \cite{Efrat14a}*{Prop.\ 6.2}).

\smallskip

(II)$_p$ \
One has  $G_{(n,p)}=G\cap (1+\grc_{\dbF_p}^n)$, where $\grc_{\dbF_p}$ denotes the augmentation ideal in the group $\dbF_p$-algebra $\dbF_p[G]$ of $G$.
This was proved by Jennings and Brauer \cite{Jennings41}*{Th.\ 5.5} for a finite $p$-group $G$,
and was extended to arbitrary groups by Lazard \cite{Lazard54}*{Cor.\ 6.10} (see also Quillen \cite{Quillen68}*{Th.\ 2.4}).

\smallskip

(III)$_p$ \ $F_{(n,p)}$ is the intersection of all kernels of homomorphisms $F\to \dbU_n(\dbF_p)$, in both the discrete and profinite contexts (see \cite{Efrat14a}*{Th.\ A}, \cite{Efrat14b}*{Example 6.4}, and \cite{MinacTan15}*{Th.\ 2.6(a)}).

\medskip

In this paper we investigate systematically general filtrations of free groups, obtained as products of powers of the lower central series.
We develop a general framework for studying such filtrations, and prove structure theorems which contain most of
the above-mentioned facts as special cases.

More specifically, given a map $e\colon \{(n,i)\in\dbZ^2\ |\ 1\leq i\leq n\}\to\dbZ_{\geq0}$ we consider the subgroups $\prod_{i=1}^n(F^{(i,0)})^{e(n,i)}$ of $F$.
We also consider the ideal $\grd_\dbZ^{(e,n)}=\sum_{i=1}^ne(n,i)\grd_\dbZ^i$ of $\dbZ\langle\langle X\rangle\rangle$, and the ideal $\grc_\dbZ^{(e,n)}=\sum_{i=1}^ne(n,i)\grc_\dbZ^i$ of $\dbZ[F]$.
We first prove in Theorem \ref{first main theorem} that, under certain assumptions on $e$,
\[
\prod_{i=1}^n(F^{(i,0)})^{e(n,i)}=\mu_\dbZ\inv(1+\grd_\dbZ^{(e,n)})=F\cap(1+\grc_\dbZ^{(e,n)}).
\]
Special choices of $e$ then give (I)$^0$, (I)$^p$, (II)$^0$, (II)$^p$, and new variants of (I)$_p$, (II)$_p$.
Next we show in Theorem \ref{intersection thm} that (again, under certain assumptions on $e$), the group $\mu_\dbZ\inv(1+\grd_\dbZ^{(e,n)})$ is the intersection of all kernels of homomorphisms $\varphi\colon F\to \dbU_{d+1}(\dbZ/e(n,d)\dbZ)$, with $1\leq d\leq n-1$.
This generalizes  (III)$^0$, (III)$^p$, (III)$_p$.

Finally, in \S\ref{section on inductive definitions}--\S\ref{section on Zassenhaus filtration} we study inductive definitions of the above subgroups $\prod_{i=1}^n(G^{(i,0)})^{e(n,i)}$, as we had for the motivating examples $G^{(n,0)}$, $G^{(n,p)}$, $G_{(n,p)}$.
After developing some general machinery in \S\ref{section on inductive definitions}, we generalize in \S\ref{section on A filtration} the inductive definition of the lower central series  $G^{(n,0)}$, and lower $p$-central series $G^{(n,p)}$, to the following situation: for a sequence $A=(a_i)_{i=1}^\infty$ of non-negative integers we define the \textsl{$A$-filtration} $G^{(n,A)}$, $n=1,2\nek$ of $G$ by $G^{(1,A)}=G$ and $G^{(n+1,A)}=(G^{(n,A)})^{a_n}[G^{(n,A)},G]$ for $n\geq1$.
We then prove in Theorem \ref{closed formula for A filtration} that
\[
G^{(n,A)}=\prod_{i=1}^n(G^{(i,0)})^{e(n,i)},
 \]
where $e(n,i)$ is the gcd of all products of $n-i$ elements from $a_1\nek a_{n-1}$ (and $e(n,n)=1$).
Note that taking $a_i=0$ or $a_i=p$ for every $i$, recovers the inductive definitions of $G^{(n,0)},G^{(n,p)}$, respectively.

In \S\ref{section on Zassenhaus filtration} we use the general machinery of \S\ref{section on inductive definitions} to extend Lazard's description of the $p$-Zassenhaus filtration, by showing that for a $p$-power $q$ one has
\[
G_{(n,q)}=\prod_{i=1}^n(G^{(i,0)})^{q^{\lceil\log_p(n/i)\rceil}},
 \]
 where the \textsl{$q$-Zassenhaus filtration} $G_{(n,q)}$ is defined inductively by $G^{(1,q)}=1$ and $G_{(n,q)}=(G_{(\lceil n/p\rceil,q)})^q\prod_{i+j=n}[G_{(i,q)},G_{(j,q)}]$ (Theorem \ref{q Zassenhaus as product}).

\medskip

Finally, in \S\ref{section on Massey products} we investigate these filtrations from a cohomological viewpoint, and relate them to Massey products in $H^2$.
As a new contribution to the classical theory of Magnus, Witt and Zassenhaus, we apply our techniques to compute the subgroup of $H^2(F/F^{(n,0)},\dbZ)$ generated by the $n$-fold Massey products corresponding to words in the alphabet $X$.
Specifically, we show that it is a free $\dbZ$-module whose rank is the necklace function of $n$ and $|X|$; see Corollary \ref{rank of H2} for details.

\medskip

While throughout the paper we work in the context of discrete groups, many of our main results have quite straightforward profinite  analogs.
The study of such filtrations in the profinite context has become increasingly interesting in recent years in infinite Galois theory -- see for instance  \cite{BogomolovTschinkel12}, \cite{CheboluEfratMinac12}, \cite{EfratMinac11a}, \cite{EfratMinac11b}, \cite{MinacTan15}, \cite{Morishita04},  \cite{Topaz12}, \cite{Vogel05}.

\medskip

We thank Ilya Tyomkin for valuable remarks which shortened some of our original arguments.
We also thank the anonymous referee for his very helpful comments, which we have used to improve the presentation at several points.

\section{Graded Lie algebras}
\label{section on garded Lie algebras}
We fix a set $X$.
For a commutative unital ring $R$ let $R\langle X\rangle$ be the free associative $R$-algebra on $X$, i.e., the algebra of non-commuting polynomials in elements of  $X$ and with coefficients in $R$.
It is graded by total degree, and we denote by $R\langle X\rangle^{(n)}$ its homogenous component of degree $n$.
Thus $R\langle X\rangle\isom\bigoplus_{n=0}^\infty R\langle X\rangle^{(n)}$ as additive groups.
We also consider the ring $R\langle\langle X\rangle\rangle=\prod_{n=0}^\infty R\langle X\rangle^{(n)}$.
When $X$ is finite, $R\langle\langle X\rangle\rangle$ is the ring of formal power series over the set $X$ of non-commuting variables and coefficients in $R$.
Given $\alp\in R\langle X\rangle$ or $\alp\in R\langle\langle X\rangle\rangle$, we denote the homogenous component of $\alp$ of degree $n$ by $\alp^{(n)}$.

Let $\grd_R=\langle X\rangle$ be the ideal  in $R\langle\langle X\rangle\rangle$ generated by $X$.
We observe:

\begin{lem}
\label{inverse in Magnus ring}
If $\alp\in\grd_R$, then $\sum_{k=0}^\infty\alp^k$ is a well-defined element of $1+\grd_R$, and is the unique two-sided inverse of $1-\alp$.
Consequently, if $\gra\subseteq\grd_R$ is an ideal of $R\langle\langle X\rangle\rangle$, then $1+\gra$ is a subgroup of $R\langle\langle X\rangle\rangle^\times$.
\end{lem}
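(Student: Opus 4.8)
The plan is to first establish the convergence and inverse claims for a single element, and then deduce the subgroup statement formally. Let $\alp\in\grd_R$. Since $\grd_R$ is generated by $X$, every element of $\grd_R$ has zero constant term, i.e. $\alp^{(0)}=0$; hence for each $k$ the lowest-degree term appearing in $\alp^k$ has degree $\geq k$. Therefore, for each fixed $n$, the degree-$n$ component $(\alp^k)^{(n)}$ vanishes for all $k>n$, so the sum $\sum_{k=0}^\infty(\alp^k)^{(n)}$ is a \emph{finite} sum in the additive group $R\langle X\rangle^{(n)}$. Assembling these over all $n$, the formal series $\beta:=\sum_{k=0}^\infty\alp^k=\prod_{n=0}^\infty\bigl(\sum_{k=0}^\infty(\alp^k)^{(n)}\bigr)$ is a well-defined element of $R\langle\langle X\rangle\rangle=\prod_{n\geq0}R\langle X\rangle^{(n)}$; its degree-$0$ part is $1$ and all higher parts lie in $\grd_R$, so $\beta\in1+\grd_R$.

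Next I would verify that $\beta$ is a two-sided inverse of $1-\alp$. The point is that the identity $(1-\alp)\bigl(\sum_{k=0}^{N}\alp^k\bigr)=1-\alp^{N+1}$ holds in the polynomial-like truncations, and since $\alp^{N+1}$ has all components of degree $\leq N$ equal to $0$, passing to the limit degree by degree gives $(1-\alp)\beta=1$; symmetrically $\beta(1-\alp)=1$. Uniqueness of the inverse is automatic in any ring: if $\gamma(1-\alp)=1=(1-\alp)\beta$ then $\gamma=\gamma(1-\alp)\beta=\beta$. This disposes of the first assertion.

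For the consequence, let $\gra\subseteq\grd_R$ be an ideal of $R\langle\langle X\rangle\rangle$. Clearly $1\in1+\gra$, and $1+\gra$ is closed under multiplication since $(1+a)(1+b)=1+(a+b+ab)$ with $a+b+ab\in\gra$ (using that $\gra$ is closed under addition and is an ideal). It remains to see $1+\gra$ is closed under inversion: given $1+a$ with $a\in\gra\subseteq\grd_R$, apply the first part with $\alp=-a$ to get that $1+a$ is invertible in $R\langle\langle X\rangle\rangle$ with inverse $\beta=\sum_{k=0}^\infty(-a)^k=1+\sum_{k=1}^\infty(-a)^k$; since $\gra$ is an ideal, every $(-a)^k$ for $k\geq1$ lies in $\gra$, and by the convergence argument above the (degreewise finite) sum $\sum_{k=1}^\infty(-a)^k$ again lies in $\gra$ because each graded piece of $\gra$ is an additive subgroup of the corresponding $R\langle X\rangle^{(n)}$. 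Hence $\beta\in1+\gra$, and $1+\gra$ is a subgroup of $R\langle\langle X\rangle\rangle^\times$.

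The only mildly delicate point — and the one I would be most careful about — is the interchange of the infinite sum over $k$ with the product decomposition over degrees $n$: one must argue at the level of each graded component that only finitely many $k$ contribute, so that no genuine infinite summation in a single additive group $R\langle X\rangle^{(n)}$ is ever required, and likewise that $\gra$ being an ideal forces $\gra=\prod_n(\gra\cap R\langle X\rangle^{(n)})$ so that the limit stays inside $\gra$. Everything else is a routine formal manipulation.
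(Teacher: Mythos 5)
Your overall strategy is the standard one (the paper itself states this lemma without proof, merely prefacing it with ``We observe''), and most of your argument is fine: the degreewise-finite convergence of $\sum_{k\ge0}\alp^k$, the telescoping identity giving $(1-\alp)\beta=1=\beta(1-\alp)$, the formal uniqueness of two-sided inverses, and the closure of $1+\gra$ under multiplication are all correct.

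The one step that does not hold up is your justification that the inverse $\beta=\sum_{k\ge0}(-a)^k$ of $1+a$ again lies in $1+\gra$. You assert that ``$\gra$ being an ideal forces $\gra=\prod_n(\gra\cap R\langle X\rangle^{(n)})$'' and then sum the terms $(-a)^k$ inside the graded pieces of $\gra$. That claim is false: ideals of $R\langle\langle X\rangle\rangle$ need not be graded. For instance, with $R=\dbZ$ and $X=\{x\}$, the ideal generated by $2x+x^2$ contains no nonzero homogeneous element of degree $1$ or $2$, so it is strictly larger than the product of its graded pieces; and in general an ideal need not be closed under degreewise-convergent infinite sums of its elements, so you cannot conclude $\sum_{k\ge1}(-a)^k\in\gra$ term by term. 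The fix is immediate and sidesteps the issue entirely: $\sum_{k\ge1}(-a)^k=(-a)\cdot\sum_{k\ge0}(-a)^k=(-a)\beta$, which lies in $\gra$ because $a\in\gra$ and $\gra$ is a two-sided ideal of $R\langle\langle X\rangle\rangle$; hence $\beta=1-a\beta\in1+\gra$. The same remark applies to your claim in the first paragraph that ``all higher parts of $\beta$ lie in $\grd_R$'' (which implicitly identifies $\grd_R$ with the set of all series with zero constant term): the clean statement is $\beta-1=\alp\beta\in\grd_R$, again because $\grd_R$ is an ideal. With these replacements the proof is complete.
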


As with any associative algebra, we consider $R\langle X\rangle$ and $R\langle\langle X\rangle\rangle$ also as Lie $R$-algebras with the Lie brackets $[\alp,\beta]=\alp\beta-\beta\alp$.
This makes  $R\langle X\rangle$ a graded Lie $R$-algebra.

Let $L_{X,R}$ be the free Lie $R$-algebra on $X$.
It has a natural grading \cite{SerreLie}*{p.\ 19}, and we denote the $n$-th homogenous component of $L_{X,R}$ by $L_{X,R}^{(n)}$.
The universal property of $L_{X,R}$ gives rise to a natural graded Lie algebra homomorphism $\phi_R\colon L_{X,R}\to R\langle X\rangle$ which is the identity on $X$.
The algebra $R\langle X\rangle$ may be identified with the universal algebra of the Lie algebra $L_{X,R}$ via this map, and $L_{X,R}$ is a free $R$-module \cite{SerreLie}*{Part I, Ch.\ IV, Th.\ 4.2(1)(3)}.

Now let $F=F_X$ be the free group on the basis $X$, and $F^{(n,0)}$, $n=1,2\nek$ its lower central filtration of $F$ (see the Introduction).
The quotients $F^{(n,0)}/F^{(n+1,0)}$ are commutative.
Therefore we have a graded $\dbZ$-module $\gr(F)=\bigoplus_{n=1}^\infty F^{(n,0)}/F^{(n+1,0)}$.
Moreover, the commutator map induces on $\gr(F)$ a structure of a graded Lie $\dbZ$-algebra.
The identity map on $X$ induces a graded Lie algebra homomorphism $L_{X,\dbZ}\to\gr(F)$.
It is surjective in degree $1$, and by induction, in all degrees
(in fact, it is an isomorphism \cite{SerreLie}*{Part I, Ch.\ IV, Th.\ 6.1}, but we will not need this fact).

We write $R[F]$ for the group $R$-algebra of $F=F_X$.
Let $\grc_R$ be its augmentation ideal, i.e., the ideal generated by all elements of the form $g-1$ with $g\in F$.
We identify $F$ as a subset of $R[F]$.

\begin{lem}
\label{subgroup}
Let $\grc$ be an ideal in $R[F]$.
Then $F\cap(1+\grc)$ is a subgroup of $F$.
\end{lem}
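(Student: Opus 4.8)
The plan is to verify the three subgroup axioms for $F\cap(1+\grc)$ directly, the only nontrivial input being that an ideal absorbs multiplication by arbitrary ring elements. This is the exact group-algebra analogue of the second assertion of \Lref{inverse in Magnus ring}, and the argument is formally the same.

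First I would record that $1=1+0\in F\cap(1+\grc)$, so the identity of $F$ lies in the set. For closure under products, given $g,h\in F\cap(1+\grc)$, write $g=1+a$ and $h=1+b$ with $a,b\in\grc$; then $gh=1+(a+b+ab)$ with $a+b+ab\in\grc$ (here one uses that $\grc$ is closed under addition and under left (or right) multiplication by elements of $R[F]$, so in particular $ab\in\grc$), and since $gh\in F$ we conclude $gh\in F\cap(1+\grc)$.

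For closure under inverses, let $g=1+a\in F\cap(1+\grc)$. Then $g^{-1}\in F$, and from $g^{-1}-1=g^{-1}(1-g)=-g^{-1}a$ together with the fact that $\grc$ is a left ideal we get $g^{-1}-1\in\grc$, i.e.\ $g^{-1}\in 1+\grc$. Hence $g^{-1}\in F\cap(1+\grc)$, and the three axioms are verified.

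I do not expect any real obstacle here: the single place where the ideal hypothesis is genuinely used (as opposed to merely a subring hypothesis) is the inversion step, where we need $\grc$ to absorb left multiplication by $g^{-1}$. If one prefers a more structural phrasing, one can instead observe that $(1+\grc)\cap R[F]^\times$ is a subgroup of the unit group $R[F]^\times$ — closure under products is as above, and if $u\in 1+\grc$ is a unit then $u^{-1}-1=-u^{-1}(u-1)\in\grc$ — and then $F\cap(1+\grc)=F\cap\bigl((1+\grc)\cap R[F]^\times\bigr)$ is an intersection of two subgroups of $R[F]^\times$, using that $F\le R[F]^\times$.
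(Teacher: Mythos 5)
Your proof is correct and uses exactly the paper's key step: the identity $g^{-1}-1=-g^{-1}(g-1)$ for closure under inversion (the paper writes this as $\alpha^{-1}=1-\alpha^{-1}(\alpha-1)$ and calls the remaining axioms immediate). You have simply written out the routine parts in full, so there is nothing to add.
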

\begin{proof}
The identity   $\alp\inv=1-\alp\inv(\alp-1)$ shows that $F\cap(1+\grc)$ is closed under inversion.
The rest is immediate.
\end{proof}

For every $x\in X$ the element $1+x$ of $R\langle\langle X\rangle\rangle$ is invertible, by Lemma \ref{inverse in Magnus ring}.
Let
\[
\mu_R\colon F\to1+\grd_R\leq R\langle\langle X\rangle\rangle^\times, \quad x\mapsto 1+x \hbox{ for } x\in X,
\]
be the {\it Magnus homomorphism}.
It extends canonically to an $R$-algebra homomorphism $\mu_R\colon R[F]\to R\langle\langle X\rangle\rangle$.
We note that
\begin{equation}
\label{c0 and d0}
\mu_R(\grc_R)\subseteq\grd_R.
\end{equation}
There are inclusions
\begin{equation}
\label{inclusions}
F^{(n,0)}\subseteq F\cap(1+\grc_R^n)\subseteq \mu_R\inv(1+\grd_R^n).
\end{equation}
For $R=\dbZ$, the classical results of Magnus and Witt (\cite{Magnus35}, \cite{Magnus37}, \cite{Witt37}) show that these are equalities:
\begin{equation}
\label{MagnusWitt}
F^{(n,0)}=F\cap(1+\grc_\dbZ^n)=\mu_\dbZ\inv(1+\grd_\dbZ^n).
\end{equation}

For every $n\geq 1$ we have a group homomorphism $\mu_R^{(n)}\colon F^{(n,0)}/F^{(n+1,0)}\to R\langle\langle X\rangle\rangle^{(n)}=R\langle X\rangle^{(n)}$ given by $\mu_R^{(n)}(gF^{(n+1,0)})=\mu_R(g)^{(n)}$ for $g\in F^{(n,0)}$.
For $g\in F^{(n,0)}$ and $h\in F^{(m,0)}$ we have as in  \cite{SerreLie}*{p.\ 25, ($*$)}, $[g,h]\in F^{(n+m,0)}$ and
\[
\mu_R([g,h])^{(n+m)}=\mu_R(g)^{(n)}\mu_R(h)^{(m)}-\mu_R(h)^{(m)}\mu_R(g)^{(h)}.
\]
Therefore the group homomorphisms $\mu_R^{(n)}$ combine to a graded Lie $R$-algebra homomorphism
\[
\gr\mu_R=\bigoplus_{n=1}^\infty\mu_R^{(n)}\colon\gr(F)\to R\langle X\rangle.
\]
The triangle of graded Lie $\dbZ$-algebra homomorphisms
\begin{equation}
\label{triangle}
\xymatrix{
L_{X,\dbZ}\ar[r]^{\phi_\dbZ}\ar@{->>}[d]_{}  &  \dbZ\langle X\rangle \\
\gr(F) \ar[ru]_{\gr\mu_\dbZ}
}
\end{equation}
commutes on every $x\in X$, and hence on all of $L_{X,\dbZ}$.

Let  $\Delta_R\colon R\langle X\rangle\to R\langle X\rangle\tensor_RR\langle X\rangle$ be the $R$-algebra homomorphism induced by the diagonal embedding $\lam\mapsto(\lam,\lam)$ for $\lam\in L_{X,R}$ (compare \cite{SerreLie}*{Part I, Ch.\ III, Def.\ 5.1}).
When $R$ is  torsion-free as a $\dbZ$-module,  \cite{SerreLie}*{Part I, Ch.\ III, Th.\ 5.4} gives an exact sequence of $R$-modules
\begin{equation}
\label{exact seq}
L_{X,R}\xrightarrow{\phi_R} R\langle X\rangle\to R\langle X\rangle\tensor_R R\langle X\rangle,
\end{equation}
where the right homomorphism is given by  $\lam\mapsto  \Delta_R(\lam)-\lam\tensor1+1\tensor\lam$.

\begin{lem}
\label{torsion-free}
For $R=\dbZ$, the cokernel of $\gr\mu_\dbZ\colon \gr(F)\to\dbZ\langle X\rangle$ is torsion-free as a $\dbZ$-module.
\end{lem}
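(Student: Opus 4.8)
The plan is to reduce everything to the exact sequence \eqref{exact seq}. First I would note that since the vertical surjection $L_{X,\dbZ}\twoheadrightarrow\gr(F)$ in the commuting triangle \eqref{triangle} is onto, the image of $\gr\mu_\dbZ$ coincides with the image of $\phi_\dbZ$. Consequently there is a canonical identification
\[
\Coker(\gr\mu_\dbZ)=\dbZ\langle X\rangle\big/\Img(\gr\mu_\dbZ)=\dbZ\langle X\rangle\big/\Img(\phi_\dbZ).
\]

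Next I would invoke \eqref{exact seq} with $R=\dbZ$, which is torsion-free as a $\dbZ$-module, so that the hypothesis of that statement is met. Exactness at the middle term says precisely that $\Img(\phi_\dbZ)$ is the kernel of the $\dbZ$-linear map $\dbZ\langle X\rangle\to\dbZ\langle X\rangle\tensor_\dbZ\dbZ\langle X\rangle$ appearing there. Combining this with the identification above yields
\[
\Coker(\gr\mu_\dbZ)\isom\dbZ\langle X\rangle\big/\Ker\isom\Img,
\]
so $\Coker(\gr\mu_\dbZ)$ is isomorphic to a $\dbZ$-submodule of $\dbZ\langle X\rangle\tensor_\dbZ\dbZ\langle X\rangle$.

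Finally, $\dbZ\langle X\rangle$ is a free $\dbZ$-module (on the non-commutative monomials in $X$), hence $\dbZ\langle X\rangle\tensor_\dbZ\dbZ\langle X\rangle$ is again free, and in particular torsion-free; a submodule of a torsion-free $\dbZ$-module is torsion-free, so $\Coker(\gr\mu_\dbZ)$ is torsion-free, as claimed. There is no real obstacle in this argument, as the substantive input is entirely contained in \eqref{exact seq}; the only point that deserves a moment's care is the first one, namely using surjectivity of $L_{X,\dbZ}\to\gr(F)$ together with the commutativity of \eqref{triangle} to replace $\Img(\gr\mu_\dbZ)$ by $\Img(\phi_\dbZ)$.
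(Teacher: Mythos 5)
Your proposal is correct and follows essentially the same route as the paper's own proof: identify $\Coker(\gr\mu_\dbZ)$ with $\Coker(\phi_\dbZ)$ via the commuting triangle \eqref{triangle} and the surjectivity of $L_{X,\dbZ}\to\gr(F)$, then use the exact sequence \eqref{exact seq} to embed that cokernel in the torsion-free module $\dbZ\langle X\rangle\tensor_\dbZ\dbZ\langle X\rangle$. Your write-up merely makes explicit the kernel/image bookkeeping that the paper leaves implicit.
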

\begin{proof}
By (\ref{triangle}), this cokernel coincides with the cokernel $\dbZ\langle X\rangle/\phi_\dbZ(L_{X,\dbZ})$ of $\phi_\dbZ$.
By (\ref{exact seq}), the latter cokernel is a submodule of the torsion-free $\dbZ$-module  $\dbZ\langle X\rangle\tensor_\dbZ \dbZ\langle X\rangle$.
Hence it is also torsion-free.
\end{proof}

\begin{cor}
\label{from g to h}
Suppose that either $R=\dbZ$ or $R$ is a field.
Then the cokernel of $\mu_R^{(n)}\colon F^{(n,0)}/F^{(n+1,0)}\to R\langle X\rangle^{(n)}$ is a torsion-free $R$-module.
\end{cor}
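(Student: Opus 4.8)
The plan is to treat the two alternatives of the hypothesis separately. When $R$ is a field the statement is immediate: every nonzero element of $R$ is invertible, so multiplication by it is an automorphism of any $R$-module, whence every $R$-module --- and in particular $\Coker(\mu_R^{(n)})$ --- is torsion-free. So the only case with genuine content is $R=\dbZ$, which I would derive from \Lref{torsion-free} by exploiting the gradings that are already in place.

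Concretely, for $R=\dbZ$ recall that $\gr(F)=\bigoplus_{n\geq1}F^{(n,0)}/F^{(n+1,0)}$ and $\dbZ\langle X\rangle=\bigoplus_{n\geq0}\dbZ\langle X\rangle^{(n)}$ are graded $\dbZ$-modules, and that $\gr\mu_\dbZ$ was defined precisely as $\bigoplus_{n\geq1}\mu_\dbZ^{(n)}$, hence is a morphism of graded $\dbZ$-modules. Forming cokernels degree by degree therefore gives
\[
\Coker(\gr\mu_\dbZ)\isom \dbZ\langle X\rangle^{(0)}\oplus\bigoplus_{n\geq1}\Coker\bigl(\mu_\dbZ^{(n)}\bigr),
\]
the degree-$0$ term being simply $\dbZ$ since $\gr(F)$ is concentrated in positive degrees. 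By \Lref{torsion-free} the left-hand side is torsion-free over $\dbZ$; since a torsion element of a direct sum of abelian groups is torsion in each coordinate, every direct summand on the right --- in particular each $\Coker(\mu_\dbZ^{(n)})$ --- is torsion-free. This is exactly the assertion for $R=\dbZ$.

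I do not expect a real obstacle here. The only small points to check are that $\gr\mu_\dbZ$ really is the direct sum of the maps $\mu_\dbZ^{(n)}$ (which is how $\gr\mu_\dbZ$ was introduced, just before the commuting triangle \eqref{triangle}) and that $\Coker$ commutes with direct sums; both are routine. If one wanted a single argument covering both cases at once, one could instead note that for $R=\dbZ$ or $R$ a field "torsion-free $R$-module" simply means "multiplication by every nonzero $r\in R$ is injective", a condition obviously inherited by direct summands of a torsion-free module; but the case split above is the most transparent presentation.
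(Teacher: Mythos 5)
Your proposal is correct and follows the same route as the paper: the field case is dismissed by linear algebra, and the $R=\dbZ$ case is deduced from \Lref{torsion-free} by taking the degree-$n$ component of the cokernel of $\gr\mu_\dbZ=\bigoplus_n\mu_\dbZ^{(n)}$. The paper states this in one line; you have merely spelled out the (routine) compatibility of cokernels with gradings and the fact that direct summands of torsion-free modules are torsion-free.
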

\begin{proof}
The case $R=\dbZ$ follows from Lemma \ref{torsion-free}.
The case where $R$ is a field follows from the general structure theory of $R$-linear spaces.
\end{proof}

\section{Multiplicatively descending maps}

\begin{defin}
\label{def: multiplicatively descending map}
\rm
We say that a map
\[
e\colon \{(n,i)\in\dbZ^2\ |\ 1\leq i\leq n\}\to\dbZ_{\geq0}
\]
is \textsl{multiplicatively descending} if it satisfies the following conditions:
\begin{enumerate}
\item[(i)]
$e(n,n)=1$ for every $n$;
\item[(ii)]
$e(n,i)\in e(n,i+1)\dbZ$ for every $1\leq i<n$.
\end{enumerate}
\end{defin}

\begin{exam}
\label{trivial example}
\rm
The \textsl{trivial} multiplicatively descending map is defined by  $e(n,i)=0$ for $1\leq i<n$, and $e(n,n)=1$.
\end{exam}

\begin{exam}
\label{e for A}
\rm
Let $(a_i)_{i=1}^\infty$ be a sequence of non-negative integers.
We define a multiplicatively descending map by setting for $1\leq i<n$,
\[
e(n,i)=\gcd\Bigl\{\prod_{j\in J}a_j\ \Bigm|\ J\subseteq\{1,2\nek n-1\},\ \ |J|=n-i\Bigr\},
\]
and setting $e(n,n)=1$.
\end{exam}

\begin{exam}
\label{lower q-central series}
\rm
In the previous example take the constant sequence $(a)_{i=1}^\infty$, where $a$ is a non-negative integer.
We obtain that $e(n,i)=a^{n-i}$  is a multiplicatively descending map.
For $a=0$ (and $e(n,n)=1$) this recovers the trivial multiplicatively descending map.
\end{exam}

\begin{exam}
\label{log pni}
\rm
Let $t$ be a positive integer and $p$ a prime number.
For integers $1\leq i\leq n$, let $j(n,i)=\left\lceil{\log_p\left(\frac ni\right)}\right\rceil$.
Then $e(n,i)=p^{tj(n,i)}$ is a multiplicatively descending map.
Indeed, (i) is immediate, and condition (ii) follows from $j(n,i)\geq j(n,i+1)$.
\end{exam}

\begin{defin}
\label{binomial}
\rm
We call a multiplicatively descending map $e$ \textsl{binomial} if  for every positive integers $n,i,l$ such that $l\leq e(n,i)$ and $il\leq n$ one has $\binom{e(n,i)}l\in e(n,il)\dbZ$.
\end{defin}

For example, the trivial multiplicatively descending map is clearly binomial.
A useful way to verify this property in more involved situations is given by the next lemma.
For a prime number $p$ let $v_p$ be the $p$-adic valuation on $\dbZ$.

\begin{lem}
\label{condition  (iii)}
Let $e$ be a multiplicatively descending map satisfying the following condition:
\begin{enumerate}
\item[(iii)]
For every positive integers $n,i,r$ and a prime number $p$ such that $ip^r\leq n$, if $v_p(e(n,i))\geq r$, then $v_p(e(n,i))-r\geq v_p(e(n,ip^r))$.
\end{enumerate}
Then $e$ is binomial.
\end{lem}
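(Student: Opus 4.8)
The plan is to reduce the binomial condition to a statement about $p$-adic valuations, prime by prime. Fix positive integers $n, i, l$ with $l \leq e(n,i)$ and $il \leq n$; we must show $e(n,il) \mid \binom{e(n,i)}{l}$, equivalently $v_p(e(n,il)) \leq v_p\!\left(\binom{e(n,i)}{l}\right)$ for every prime $p$. So fix $p$, and write $s = v_p(e(n,i))$ and $m = v_p(l)$. By Kummer's theorem, $v_p\!\left(\binom{e(n,i)}{l}\right)$ equals the number of carries when adding $l$ to $e(n,i) - l$ in base $p$; in particular, since $p^m \mid l$ and $p^s \mid e(n,i)$, one checks easily that $v_p\!\left(\binom{e(n,i)}{l}\right) \geq s - m$ when $m < s$ (the standard estimate $v_p\binom{a}{b} \geq v_p(a) - v_p(b)$, valid because $b \leq a$). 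The case $m \geq s$ is even easier: then $l \geq p^s$, and since $l \leq e(n,i)$, multiplicative descent plus $p^s \mid e(n,i)$ forces things into a trivial inequality, or one simply notes $\binom{e(n,i)}{l}$ need not be controlled and handles it via the $il \geq ip^s$ route below. So the genuine content is: when $m < s$, show $v_p(e(n,il)) \leq s - m$.

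Next I would feed in hypothesis (iii). Write $l = p^m l'$ with $p \nmid l'$. Since $ip^m \leq il \leq n$ and $v_p(e(n,i)) = s \geq m$, hypothesis (iii) (applied with $r = m$) gives $v_p(e(n,i)) - m \geq v_p(e(n,ip^m))$, i.e. $v_p(e(n,ip^m)) \leq s - m$. Now I need to pass from $e(n, ip^m)$ to $e(n, il) = e(n, ip^m l')$. Since $l' \geq 1$ and $ip^m l' \leq n$, we have $ip^m \leq ip^m l' \leq n$, so both indices are legal, and by property (ii) of a multiplicatively descending map, $e(n, j) \in e(n, j+1)\dbZ$ for $j < n$; iterating, $e(n, ip^m)$ is a multiple of $e(n, ip^m l')$ (the first index is smaller). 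Hence $v_p(e(n, ip^m l')) \leq v_p(e(n, ip^m)) \leq s - m$, which is exactly $v_p(e(n,il)) \leq s - m \leq v_p\!\left(\binom{e(n,i)}{l}\right)$, as desired.

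It remains to dispose of the case $m \geq s$, i.e. $p^s \mid l$. Here $p^m \mid l \leq e(n,i)$ and $v_p(e(n,i)) = s$, so $m \leq s$, forcing $m = s$. Then $l = p^s l'$ with $p \nmid l'$, and $ip^s \leq il \leq n$, so hypothesis (iii) with $r = s$ gives $v_p(e(n, ip^s)) \leq v_p(e(n,i)) - s = 0$, i.e. $p \nmid e(n, ip^s)$. As above, $e(n, il) = e(n, ip^s l')$ divides $e(n, ip^s)$ up to the descent relation (indices $ip^s \leq ip^s l' \leq n$), so $p \nmid e(n,il)$, and the divisibility $v_p(e(n,il)) = 0 \leq v_p\binom{e(n,i)}{l}$ is automatic. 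Combining the two cases over all primes $p$ yields $\binom{e(n,i)}{l} \in e(n,il)\dbZ$, so $e$ is binomial.

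The main obstacle I anticipate is bookkeeping in the first paragraph: correctly justifying $v_p\binom{e(n,i)}{l} \geq v_p(e(n,i)) - v_p(l)$ and making sure the two cases $v_p(l) < v_p(e(n,i))$ and $v_p(l) \geq v_p(e(n,i))$ are both routed through a legitimate application of (iii) with the right value of $r$ (namely $r = v_p(l)$, which requires $r \leq v_p(e(n,i))$ — automatic since $p^r \mid l \leq e(n,i)$ — and $ip^r \leq n$ — which follows from $ip^r \mid il$ hence $ip^r \leq il \leq n$). Everything else is a direct application of Kummer's theorem and properties (i)–(ii) of multiplicatively descending maps.
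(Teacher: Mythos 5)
Your proof is correct and follows essentially the same route as the paper: reduce to $p$-adic valuations prime by prime, compare $r=v_p(l)$ with $s=v_p(e(n,i))$, use the estimate $v_p\bigl(\binom{a}{b}\bigr)\ge v_p(a)-v_p(b)$, and chain condition (iii) with property (ii). One false assertion should be deleted: in the case $v_p(l)\ge s$ you claim that $p^{v_p(l)}\mid l\le e(n,i)$ forces $v_p(l)\le s$ and hence $v_p(l)=s$ --- it does not (e.g.\ $l=4\le 6=e(n,i)$ with $p=2$ gives $v_p(l)=2>1=s$). Fortunately this claim is never used: applying (iii) with $r=s$ (legal since $p^s\le p^{v_p(l)}\le l$, so $ip^s\le il\le n$) already yields $v_p(e(n,ip^s))=0$, and then (ii) gives $v_p(e(n,il))=0$, exactly as you proceed to argue.
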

\begin{proof}
Let $n,i,l$ be as in Definition \ref{binomial}.
It suffices to show that for every prime number $p$,
\begin{equation}
\label{p-adic valuations}
v_p\Biggl(\binom{e(n,i)}l\Biggr)\geq v_p(e(n,il)).
\end{equation}
To this end let $r=v_p(l)$ and $s=v_p(e(n,i))$.

When $s<r$ we have $ip^s<ip^r\leq il\leq n$, so by (iii) (with $r$ replaced by $s$), $v_p(e(n,ip^s))=0$.
As $ip^s< il$, (ii) implies that  $v_p(e(n,il))=0$, and (\ref{p-adic valuations}) holds.

Next suppose that $s\geq r$.
For $a\geq b\geq1$ one has $\binom ab=\frac ab\binom{a-1}{b-1}$,
whence $v_p(\binom ab)\geq v_p(a)-v_p(b)$.
As $ip^r\leq il\leq n$, conditions (iii), and (ii), respectively, therefore give
\[
v_p\Biggl(\binom{e(n,i)}l\Biggr)\geq s-r\geq v_p(e(n,ip^r))\geq v_p(e(n,il))
\]
in this case as well.
\end{proof}

\begin{exam}
\label{e for A binom}
\rm
The map of Example \ref{e for A} is binomial.
Indeed, take a sequence $(a_i)_{i=1}^n$ of non-negative integers, a prime number $p$, and $1\leq i\leq n-1$.
We first show that if $v_p(e(n,i))\geq1$, then $v_p(e(n,i+1))<v_p(e(n,i))$.
Indeed, there exists a subset $J$ of $\{1,2\nek n-1\}$ of size $n-i$ such that $v_p(e(n,i))=v_p(\prod_{j\in J}a_j)$.
Further, there exists $j_0\in J$ such that $v_p(a_{j_0})\geq1$.
Then
\[
v_p(e(n,i))>v_p\bigl(\prod_{j\in J\setminus\{j_0\}}a_j\bigr) \geq v_p(e(n,i+1)).
\]
Iterating this, we obtain that if $v_p(e(n,i))\geq r\geq0$ and $i+r\leq n$, then $v_p(e(n,i))-r\geq v_p(e(n,i+r))$.
By (ii), this implies that $ v_p(e(n,i))-r\geq v_p(e(n,ip^r))$ whenever $ip^r\leq n$.
This proves (iii).
We now apply Lemma \ref{condition (iii)}.
\end{exam}

\begin{exam}
\label{log pni binom}
\rm
The map of Example \ref{log pni} is binomial.
Indeed, consider a prime number $p$ and integers $t,n,i,r\geq1$.
Suppose that $v_p(e(n,i))\geq r$.
Therefore
\[
v_p(e(n,i))-r=t\Bigl\lceil\log_p\frac ni\Bigr\rceil-r\geq t\Bigl\lceil\log_p\frac n{ip^r}\Bigr\rceil=v_p(e(n,ip^r)).
\]
Thus condition (iii) holds, and we use again Lemma \ref{condition  (iii)}.
\end{exam}

\section{Powers of the lower central series}
\label{section on powers of lcs}
As before, let $X$ be a fixed set, let $R$ be a unital commutative ring, and let $F=F_X$ be the free group on basis $X$.
For a multiplicatively descending map $e$, we consider the ideals
\[
\grc_R^{(e,n)}=\sum_{i=1}^ne(n,i)\grc_R^i, \qquad \grd_R^{(e,n)}=\sum_{i=1}^ne(n,i)\grd_R^i
\]
of $R[F]$, $R\langle\langle X\rangle\rangle$, respectively.
We record the following alternative description of $\grd_R^{(e,n)}$:

\begin{lem}
\label{description of grd(e,n)}
One has $\grd_R^{(e,n)}=\bigcap_{d=1}^n(e(n,d)\grd_R+\grd_R^{d+1})$.
\end{lem}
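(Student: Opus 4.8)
The key observation is that since $e$ is multiplicatively descending, the coefficients $e(n,i)$ are nested: $e(n,i) \in e(n,i+1)\dbZ \subseteq \cdots \subseteq e(n,n)\dbZ = \dbZ$. So the plan is to prove the two inclusions separately, using this divisibility.

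For the inclusion $\grd_R^{(e,n)} \subseteq \bigcap_{d=1}^n(e(n,d)\grd_R+\grd_R^{d+1})$, fix $d$ with $1\le d\le n$ and show $e(n,i)\grd_R^i \subseteq e(n,d)\grd_R + \grd_R^{d+1}$ for each $1\le i\le n$. Split into two cases. If $i > d$, then $\grd_R^i \subseteq \grd_R^{d+1}$, so certainly $e(n,i)\grd_R^i \subseteq \grd_R^{d+1}$. If $i \le d$, then by condition (ii) iterated, $e(n,i) \in e(n,d)\dbZ$, and $\grd_R^i \supseteq \grd_R^d \supseteq \grd_R$ is irrelevant; rather $\grd_R^i \subseteq \grd_R$ when $i\ge 1$, so $e(n,i)\grd_R^i \subseteq e(n,i)\grd_R \subseteq e(n,d)\grd_R$. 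Summing over $i$ gives $\grd_R^{(e,n)} \subseteq e(n,d)\grd_R + \grd_R^{d+1}$, and intersecting over $d$ yields the desired containment.

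For the reverse inclusion $\bigcap_{d=1}^n(e(n,d)\grd_R+\grd_R^{d+1}) \subseteq \grd_R^{(e,n)}$, I would argue by a descending induction that peels off one graded piece at a time. Write an arbitrary element $\alpha$ of the intersection using its homogeneous components $\alpha = \sum_{k\ge 1}\alpha^{(k)}$ (the degree-$0$ part vanishes since $\alpha\in\grd_R$). The claim is $\alpha^{(k)} \in e(n,k)\grd_R^{(e,n)}$... more precisely, one shows $\alpha \in \grd_R^{(e,n)}$ by checking that for each $d = 1,2,\ldots,n$, the partial sum $\sum_{k\le d}\alpha^{(k)}$ lies in $\sum_{i=1}^d e(n,i)\grd_R^i + \grd_R^{d+1}$, proceeding by induction on $d$: membership in $e(n,d)\grd_R + \grd_R^{d+1}$ forces the degree-$d$ component of $\alpha$ minus the already-accounted-for terms to be $e(n,d)$ times a homogeneous degree-$d$ element, hence in $e(n,d)\grd_R^d$; for $d\ge n$ the tail $\grd_R^{d+1}$ with $d=n$ is absorbed since $e(n,n)=1$ makes $\grd_R^n \subseteq \grd_R^{(e,n)}$, so $\grd_R^{n+1}\subseteq\grd_R^{(e,n)}$ too.

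The main obstacle is the reverse inclusion, and specifically bookkeeping the induction so that the divisibility $e(n,d)\mid$ (the relevant coefficient) is genuinely available at each stage — this is where condition (ii) and the grading interact, and one must be careful that subtracting off the lower-degree corrections (which already lie in $\grd_R^{(e,n)}$) does not disturb the degree-$d$ homogeneous component. Once the degree-by-degree extraction is set up correctly, each individual step is just the definition of the powers $\grd_R^i$ as spanned by monomials of degree $\ge i$ together with the nesting of the $e(n,i)$.
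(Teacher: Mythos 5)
Your proposal is correct and follows essentially the same route as the paper: the forward inclusion is handled by the identical case split on $i\leq d$ versus $i>d$, and your degree-by-degree extraction for the reverse inclusion is the same induction the paper runs, where the key identity $\grd_R^m\cap(e(n,m)\grd_R+\grd_R^{m+1})=e(n,m)\grd_R^m+\grd_R^{m+1}$ is exactly your observation that the degree-$d$ homogeneous component of an element of $e(n,d)\grd_R+\grd_R^{d+1}$ lies in $e(n,d)\grd_R^d$. The interaction you worry about in your last paragraph is in fact harmless, since the lower-degree corrections cannot disturb the degree-$d$ component.
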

\begin{proof}
We denote the ideal $\bigcap_{d=1}^n(e(n,d)\grd_R+\grd_R^{d+1})$ by $J$.

Let $1\leq i,d\leq n$.
If $1\leq i\leq d$, then $e(n,i)\in e(n,d)\dbZ$, so $e(n,i)\grd_R^i\subseteq e(n,d)\grd_R$.
If $d<i\leq n$, then $e(n,i)\grd_R^i\subseteq \grd_R^{d+1}$.
Thus in both cases, $e(n,i)\grd_R^i\subseteq e(n,d)\grd_R+\grd_R^{d+1}$.
Since $i,d$ were arbitrary,  $\grd_R^{(e,n)}\subseteq J$.

Conversely, we show by induction on $1\leq m\leq n$ that
\begin{equation}
\label{inclusion of J}
J\subseteq \sum_{i=1}^{m-1}e(n,i)\grd_R^i+\grd_R^m.
\end{equation}
For $m=1$ this is trivial.
Assume that the induction hypothesis holds for $1\leq m\leq n-1$.
We note that
\[
\grd_R^m\cap J\subseteq \grd_R^m\cap(e(n,m)\grd_R+\grd_R^{m+1})=e(n,m)\grd_R^m+\grd_R^{m+1}.
\]
As $\sum_{i=1}^{m-1}e(n,i)\grd_R^i\subseteq\grd_R^{(e,n)}\subseteq J$, (\ref{inclusion of J}) implies that
\[
\begin{split}
J&\subseteq \sum_{i=1}^{m-1}e(n,i)\grd_R^i+(\grd_R^m\cap J)
\subseteq  \sum_{i=1}^{m-1}e(n,i)\grd_R^i+e(n,m)\grd_R^m+\grd_R^{m+1}\\
&=\sum_{i=1}^me(n,i)\grd_R^i+\grd_R^{m+1},
\end{split}
\]
completing the induction.

For $m=n$ we obtain  $J\subseteq\sum_{i=1}^ne(n,i)\grd_R^i=\grd_R^{(e,n)}$, whence $\grd_R^{(e,n)}=J$.
\end{proof}

\begin{cor}
\label{cor to th 4.2}
One has $\grd_\dbZ^{(e,n)}\cap\grd_\dbZ^m\subseteq e(n,m)\grd_\dbZ^m+\grd_\dbZ^{m+1}$.
\end{cor}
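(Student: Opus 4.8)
The plan is to read the Corollary off of Lemma~\ref{description of grd(e,n)}. Here $1\le m\le n$, so that $e(n,m)$ is defined and the index $d=m$ occurs in the intersection $\grd_\dbZ^{(e,n)}=\bigcap_{d=1}^n(e(n,d)\grd_\dbZ+\grd_\dbZ^{d+1})$ furnished by that lemma; in particular $\grd_\dbZ^{(e,n)}\subseteq e(n,m)\grd_\dbZ+\grd_\dbZ^{m+1}$. Intersecting both sides with $\grd_\dbZ^m$ then reduces the claim to the identity
\[
(e(n,m)\grd_\dbZ+\grd_\dbZ^{m+1})\cap\grd_\dbZ^m=e(n,m)\grd_\dbZ^m+\grd_\dbZ^{m+1},
\]
which is precisely the equality already used (without a separate proof) inside the proof of Lemma~\ref{description of grd(e,n)}. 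So the only real work is to justify this one identity.

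The inclusion $\supseteq$ is immediate, since $e(n,m)\grd_\dbZ^m$ is contained both in $\grd_\dbZ^m$ and in $e(n,m)\grd_\dbZ$, while $\grd_\dbZ^{m+1}\subseteq\grd_\dbZ^m\cap(e(n,m)\grd_\dbZ+\grd_\dbZ^{m+1})$. For $\subseteq$ I would argue with the grading of $\dbZ\langle\langle X\rangle\rangle$: recall that $\grd_\dbZ^j$ consists exactly of the series whose homogeneous components of degree $<j$ all vanish. Let $\gamma$ belong to the left-hand side, and write $\gamma=e(n,m)\alpha+\beta$ with $\alpha\in\grd_\dbZ$ and $\beta\in\grd_\dbZ^{m+1}$. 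For each $k$ with $1\le k<m$, the degree-$k$ components of $\gamma$ and of $\beta$ both vanish, hence $e(n,m)\alpha^{(k)}=0$. Putting $\alpha'=\sum_{k\ge m}\alpha^{(k)}\in\grd_\dbZ^m$, we get $e(n,m)\alpha=e(n,m)\alpha'$, so $\gamma=e(n,m)\alpha'+\beta\in e(n,m)\grd_\dbZ^m+\grd_\dbZ^{m+1}$, as desired.

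I do not anticipate any substantial obstacle; the Corollary is essentially a bookkeeping consequence of Lemma~\ref{description of grd(e,n)}. The one point requiring care is the last step: one cannot conclude $\alpha\in\grd_\dbZ^m$ (which is false in general), only that $e(n,m)$ annihilates the low-degree components of $\alpha$, so one must pass to the truncation $\alpha'$. As a side remark, this grading argument uses nothing special about $\dbZ$ and goes through over an arbitrary commutative unital ring $R$, which is exactly the form in which the identity is invoked inside the proof of Lemma~\ref{description of grd(e,n)}.
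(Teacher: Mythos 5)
Your proof is correct and follows essentially the same route as the paper: both read off the $d=m$ term of Lemma~\ref{description of grd(e,n)} and then verify the identity $(e(n,m)\grd_\dbZ+\grd_\dbZ^{m+1})\cap\grd_\dbZ^m=e(n,m)\grd_\dbZ^m+\grd_\dbZ^{m+1}$, which the paper does in two asserted steps (the modular law followed by $e(n,m)\grd_\dbZ\cap\grd_\dbZ^m=e(n,m)\grd_\dbZ^m$). Your grading/truncation argument is simply a more explicit justification of that identity, and your observation that it avoids any torsion-freeness assumption (one cannot in general conclude $\alpha\in\grd_\dbZ^m$, only that $e(n,m)$ kills the low-degree components) is a fair point of care.
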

\begin{proof}
By Lemma \ref{description of grd(e,n)},
\[
\begin{split}
\grd_\dbZ^{(e,n)}\cap\grd_\dbZ^m
&\subseteq (e(n,m)\grd_\dbZ+\grd_\dbZ^{m+1})\cap\grd_\dbZ^m\\
&=(e(n,m)\grd_\dbZ\cap\grd_\dbZ^m)+\grd^{m+1}=e(n,m)\grd_\dbZ^m+\grd_\dbZ^{m+1}.
\end{split}
\]
\end{proof}

\begin{thm}
\label{first main theorem}
Let $e$ be a binomial multiplicatively descending map.
Then
\[
\prod_{i=1}^n(F^{(i,0)})^{e(n,i)}\subseteq F\cap(1+\grc_R^{(e,n)})\subseteq\mu_R\inv(1+\grd_R^{(e,n)}).
\]
Moreover, when $R=\dbZ$, these are equalities.
\end{thm}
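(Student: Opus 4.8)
The inclusions $\prod_{i=1}^n(F^{(i,0)})^{e(n,i)}\subseteq F\cap(1+\grc_R^{(e,n)})\subseteq\mu_R\inv(1+\grd_R^{(e,n)})$ should follow from elementary manipulations. For the first one, recall from \eqref{inclusions} that $F^{(i,0)}\subseteq F\cap(1+\grc_R^i)$. If $g\in F^{(i,0)}$ then $g-1\in\grc_R^i$, and since $1+\grc_R^{(e,n)}$ is a group (Lemma~\ref{subgroup} applied to the ideal $\grc_R^{(e,n)}$), it suffices to show $g^{e(n,i)}\in 1+e(n,i)\grc_R^i$. Writing $g=1+\gamma$ with $\gamma\in\grc_R^i$ and expanding $g^{e(n,i)}=(1+\gamma)^{e(n,i)}$ by the binomial theorem, the term $\binom{e(n,i)}{l}\gamma^l$ lies in $\binom{e(n,i)}{l}\grc_R^{il}$; for $il\le n$ the binomial property gives $\binom{e(n,i)}{l}\in e(n,il)\dbZ\subseteq e(n,i)\dbZ$ (using that $e(n,il)\in e(n,i)\dbZ$ by condition~(ii) since $il\ge i$, wait--- one needs $e(n,il)\subseteq e(n,i)\dbZ$, which holds as $il\ge i$ and (ii) is monotone in the right direction), while for $il>n$ one has $\gamma^l\in\grc_R^{il}\subseteq\grc_R^{n+1}\subseteq\grc_R^{(e,n)}$ directly; and the leading factor $e(n,i)\gamma$ is in $e(n,i)\grc_R^i\subseteq\grc_R^{(e,n)}$. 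So $g^{e(n,i)}\in 1+\grc_R^{(e,n)}$, and taking products over $i$ gives the first inclusion. The second inclusion $F\cap(1+\grc_R^{(e,n)})\subseteq\mu_R\inv(1+\grd_R^{(e,n)})$ is immediate from $\mu_R(\grc_R)\subseteq\grd_R$ (see \eqref{c0 and d0}) applied ideal-wise: $\mu_R(\grc_R^{(e,n)})=\mu_R\bigl(\sum e(n,i)\grc_R^i\bigr)\subseteq\sum e(n,i)\grd_R^i=\grd_R^{(e,n)}$.

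The substance of the theorem is the reverse inclusion $\mu_\dbZ\inv(1+\grd_\dbZ^{(e,n)})\subseteq\prod_{i=1}^n(F^{(i,0)})^{e(n,i)}$ when $R=\dbZ$. I would prove this by downward induction on $m$ that
\[
\mu_\dbZ\inv(1+\grd_\dbZ^{(e,n)})\subseteq \Bigl(\prod_{i=1}^{m-1}(F^{(i,0)})^{e(n,i)}\Bigr)\cdot F^{(m,0)},
\]
starting from $m=1$ (trivial, as $F^{(1,0)}=F$) and running up to $m=n+1$, where $F^{(n+1,0)}\subseteq (F^{(n,0)})^{e(n,n)}=F^{(n,0)}$ absorbs harmlessly into the last factor, giving the claim. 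For the inductive step, fix $g\in\mu_\dbZ\inv(1+\grd_\dbZ^{(e,n)})$; by hypothesis write $g=\bigl(\prod_{i=1}^{m-1}h_i^{e(n,i)}\bigr)\cdot g_m$ with $g_m\in F^{(m,0)}$. Since each $h_i^{e(n,i)}$ lies in $F\cap(1+\grc_\dbZ^{(e,n)})\subseteq\mu_\dbZ\inv(1+\grd_\dbZ^{(e,n)})$ by the inclusions just proved, and this set is a group, we get $g_m\in\mu_\dbZ\inv(1+\grd_\dbZ^{(e,n)})\cap F^{(m,0)}$. Then $\mu_\dbZ(g_m)-1\in\grd_\dbZ^{(e,n)}\cap\grd_\dbZ^m$, and Corollary~\ref{cor to th 4.2} gives $\mu_\dbZ(g_m)^{(m)}\in e(n,m)\dbZ\langle X\rangle^{(m)}$ (the degree-$m$ part). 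Now I invoke Corollary~\ref{from g to h}: the cokernel of $\mu_\dbZ^{(m)}\colon F^{(m,0)}/F^{(m+1,0)}\to\dbZ\langle X\rangle^{(m)}$ is torsion-free, hence the preimage of $e(n,m)\dbZ\langle X\rangle^{(m)}$ under $\mu_\dbZ^{(m)}$ equals $e(n,m)\cdot\bigl(F^{(m,0)}/F^{(m+1,0)}\bigr)+$ (nothing extra), i.e. $g_m F^{(m+1,0)}$ is an $e(n,m)$-th power in the abelian group $F^{(m,0)}/F^{(m+1,0)}$. So $g_m=h_m^{e(n,m)}\cdot g_{m+1}$ for some $h_m\in F^{(m,0)}$ and $g_{m+1}\in F^{(m+1,0)}$, which advances the induction to $m+1$.

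The main obstacle is the torsion-freeness argument in the inductive step: one needs that $\mu_\dbZ(g_m)^{(m)}$ being divisible by $e(n,m)$ in $\dbZ\langle X\rangle^{(m)}$ forces $g_m F^{(m+1,0)}$ to be an $e(n,m)$-th power in $F^{(m,0)}/F^{(m+1,0)}$ --- and this is exactly what torsion-freeness of the cokernel of $\mu_\dbZ^{(m)}$ buys us (if $x$ maps into $e(n,m)\cdot\mathrm{image}$, then $x-e(n,m)y$ maps to $0$ in the torsion-free cokernel for a suitable $y$, but $\mu_\dbZ^{(m)}$ need not be injective, so one must be careful: actually $x$ and $e(n,m)y$ have the same image, hence $x-e(n,m)y\in\ker\mu_\dbZ^{(m)}$; however, $\mu_\dbZ^{(m)}$ \emph{is} injective since $\gr\mu_\dbZ$ restricted through $L_{X,\dbZ}$ is, or more directly because the kernel of $\mu_\dbZ$ is trivial on the graded pieces by \eqref{MagnusWitt}, which identifies $F^{(m,0)}$ with $F\cap(1+\grd_\dbZ^m)$ --- so $\ker\mu_\dbZ^{(m)}=0$ and $x=e(n,m)y$ outright). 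I would make this precise by noting injectivity of $\mu_\dbZ^{(m)}$ follows from \eqref{MagnusWitt} together with the fact that $\mu_\dbZ(F^{(m+1,0)})\subseteq 1+\grd_\dbZ^{m+1}$, so that $\mu_\dbZ(g_m)^{(m)}=0$ implies $g_m\in F^{(m+1,0)}$. Everything else is bookkeeping with the group property of $1+\grc_\dbZ^{(e,n)}$ and the ideal inclusions.
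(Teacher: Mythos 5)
Your proposal is correct and follows essentially the same route as the paper: the two inclusions are proved exactly as in the paper (binomial expansion of $(1+\gamma)^{e(n,i)}$ using the binomial property of $e$ together with Lemma~\ref{subgroup}, and $\mu_R(\grc_R)\subseteq\grd_R$ for the second), and your inductive construction of the decomposition $g=\bigl(\prod_{i<m}h_i^{e(n,i)}\bigr)g_m$ is just the contrapositive packaging of the paper's maximal-counterexample argument, resting on the same three ingredients (Corollary~\ref{cor to th 4.2} for divisibility of the degree-$m$ coefficient by $e(n,m)$, torsion-freeness of the cokernel of $\mu_\dbZ^{(m)}$ from Corollary~\ref{from g to h} to lift $\lambda$, and Magnus--Witt to descend into $F^{(m+1,0)}$). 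The stray remark that it ``suffices to show $g^{e(n,i)}\in 1+e(n,i)\grc_R^i$'' is not what you end up proving, but the conclusion you actually reach, $g^{e(n,i)}\in 1+\grc_R^{(e,n)}$, is the correct one and matches the paper.
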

\begin{proof}
The right inclusion follows from (\ref{c0 and d0}).

For the left inclusion, let $1\leq i\leq n$ and let $g\in F^{(i,0)}$.
By (\ref{inclusions}), $g=1+\alpha$ with $\alpha\in\grc_R^i$.
We have
\[
g^{e(n,i)}=1+\sum_{l=1}^{e(n,i)}{{e(n,i)}\choose l}\alpha^l.
\]
Let  $1\leq l\leq e(n,i)$.
Since $e$ is binomial, if $il\leq n$, then
\[
{{e(n,i)}\choose l}\alpha^l\in e(n,il)\grc_R^{il}\subseteq\grc_R^{(e,n)}.
\]
If $il\geq n$, then
\[
{{e(n,i)}\choose l}\alpha^l\in\grc_R^{il}\subseteq\grc_R^n=e(n,n)\grc_R^n\subseteq\grc_R^{(e,n)}
\]
in this case as well.
Therefore $g^{e(n,i)}\in1+\grc_R^{(e,n)}$.
It remains to recall that $F\cap(1+\grc_R^{(e,n)})$ is a subgroup of $F$ (Lemma \ref{subgroup}).

Finally, we prove that when $R=\dbZ$, the set
\[
\Gam=\mu_\dbZ\inv(1+\grd_\dbZ^{(e,n)})\setminus\prod_{i=1}^n(F^{(i,0)})^{e(n,i)}
\]
is empty.
Indeed, assume that $\Gam\neq\emptyset$.
Since $e(n,n)=1$ we have $\Gam\cap F^{(n,0)}=\emptyset$.
Therefore for every $g\in \Gam$ there is an integer $m(g)$ such that $1\leq m(g)<n$ and $g\in F^{(m(g),0)}\setminus F^{(m(g)+1,0)}$.
We may choose $g\in\Gam$ with $m:=m(g)$ maximal.
The definition of $\Gamma$ implies that $\mu_\dbZ(g)\in1+\grd_\dbZ^{(e,n)}$, and $g\in F^{(m,0)}$ implies that $\mu_\dbZ(g)\in1+\grd_\dbZ^m$, by (\ref{MagnusWitt}).
It therefore follows from Corollary \ref{cor to th 4.2} that $\mu_\dbZ(g)^{(m)}=e(n,m)\lambda\neq0$ for some $\lambda\in \dbZ\langle X\rangle^{(m)}$.
By Corollary \ref{from g to h}, the cokernel of $\mu_\dbZ^{(m)}$ is a torsion-free $\dbZ$-module.
Hence there exists $h\in F^{(m,0)}$ with $\mu_\dbZ(h)^{(m)}=\lambda$.
We obtain that  $gh^{-e(n,m)}\in F^{(m,0)}$ and $\mu_\dbZ^{(m)}(gh^{-e(n,m)})=0$, so by (\ref{MagnusWitt}), $gh^{-e(n,m)}\in F^{(m+1,0)}$.

On the other hand, by the first part of the theorem, $h^{-e(n,m)}\in\mu_\dbZ\inv(1+\grd_\dbZ^{(e,n)})$.
Since $1+\grd_\dbZ^{(e,n)}$ is multiplicatively closed (Lemma \ref{inverse in Magnus ring}), $gh^{-e(n,m)}\in\mu_\dbZ\inv(1+\grd_\dbZ^{(e,n)})$.
Further, $h^{e(n,m)}\in\prod_{i=1}^n(F^{(i,0)})^{e(n,i)}$, so we have $gh^{-e(n,m)}\not\in\prod_{i=1}^n(F^{(i,0)})^{e(n,i)}$.
Therefore $gh^{-e(n,m)}\in\Gam$, contrary to the maximality of $m$.
\end{proof}

\begin{exam}
\label{examples for first theorem - trivial e}
\rm
Let $R=\dbZ$ and let $e(n,i)$ be the trivial multiplicatively descending map (Example \ref{trivial example}).
Then Theorem \ref{first main theorem} contains the classical results (\ref{inclusions}),  (\ref{MagnusWitt}) of Magnus and Witt as special cases (note however that (\ref{MagnusWitt}) was used in the proof of Theorem \ref{first main theorem}).
\end{exam}

\begin{exam}
\label{examples for first theorem - A filtration}
\rm
Let $R=\dbZ$, let $a$ be a non-negative integer, let $e(n,i)=a^{n-i}$ as in Example \ref{lower q-central series}, and recall that it is binomial (Example \ref{e for A binom}).
We obtain from Theorem \ref{first main theorem} that
\[
\prod_{i=1}^n(F^{(i,0)})^{a^{n-i}}
=F\cap(1+\sum_{i=1}^na^{n-i}\grc_\dbZ^i)
=\mu_\dbZ\inv(1+\sum_{i=1}^na^{n-i}\grd_\dbZ^i).
\]
The sequence $\prod_{i=1}^n(F^{(i,0)})^{a^{n-i}}$, $n=1,2\nek$ is a generalization of the descending $p$-lower sequence of $F$ (when one takes $a=p$).
In this sense, Theorem  \ref{first main theorem} for this choice of $e$ generalizes the results of Skopin and Koch (\cite{Skopin50}, \cite{Koch60}, \cite{Koch02}*{Th.\ 7.14}) discussed in the Introduction  (the latter result is however in a pro-$p$ context).
\end{exam}

\begin{exam}
\label{examples for first theorem - Zassenhaus}
\rm
Let $R=\dbZ$, and take a prime number $p$ and an integer $t\geq1$.
Consider the multiplicatively descending map $e$ of Example \ref{log pni}, and recall that it is binomial (Example \ref{log pni binom}).
We obtain from Theorem \ref{first main theorem} that
\[
\prod_{i=1}^n(F^{(i,0)})^{p^{t\lceil\log_p\frac ni\rceil}}
=F\cap(1+\sum_{i=1}^np^{t\lceil\log_p\frac ni\rceil}\grc_\dbZ^i)
=\mu_\dbZ\inv(1+\sum_{i=1}^np^{t\lceil\log_p\frac ni\rceil}\grd_\dbZ^i).
\]
As we will later show in \S\ref{section on Zassenhaus filtration}, for $t=1$ the product $\prod_{i=1}^n(F^{(i,0)})^{p^{\lceil\log_p\frac ni\rceil}}$ is the $n$th term $F_{(n,p)}$ of the $p$-Zassenhaus filtration of $F$.
\end{exam}

\begin{rem}
\rm
The proof of Theorem \ref{first main theorem} does not use condition (ii) of Definition  \ref{def: multiplicatively descending map}.
However, given a map $e\colon\{(n,i)\ |\ 1\leq i\leq n\}\to\dbZ_{\ge0}$ satisfying only $e(n,n)=1$, we may define a map $e'$ by $e'(n,i)=\gcd_{1\leq j\leq i}e(n,j)$.
Then $e'$ is a multiplicatively descending map.
Clearly, $e(n,i)\in e'(n,i)\dbZ$ for every $i\leq n$.
Furthermore, $e'(n,i)$ is a linear combination of $e(n,1)\nek e(n,i)$ with integral coefficients.
Therefore
\[
\prod_{i=1}^n(F^{(i,0)})^{e(n,i)}=\prod_{i=1}^n(F^{(i,0)})^{e'(n,i)}, \quad \grc_R^{(e,n)}=\grc_R^{(e',n)}, \quad \grd_R^{(e,n)}=\grd_R^{(e',n)}.
\]
Hence we lose nothing by assuming (ii) here.
\end{rem}

\section{Intersections of kernels}
\label{section on intersections of kernels}
Let $R$ be again a unital commutative ring and $n\geq2$ an integer.
Let $\dbU_n(R)$ be the group of $n\times n$ upper-triangular unipotent matrices over $R$.
For an integer $t\geq0$ let $T_{n,t}(R)$ be the set of $n\times n$ matrices $(a_{ij})$ over $R$ with $a_{ij}=0$ for every $1\leq i,j\leq n$ such that $j-i<t$.
Thus $T_{n,t}(R)=\{0\}$ for $n\leq t$.
We view $T_{n,0}(R)$ as an $R$-algebra, filtered by the powers of the ideal $T_{n,1}(R)$.
Note that $T_{n,t}(R)T_{n,t'}(R)\subseteq T_{n,t+t'}(R)$, and in particular  $T_{n,1}(R)^n=\{0\}$.
We write $I_n$ for the identity matrix of order $n\times n$.

As before, let $F=F_X$ be the free group on a set $X$ of generators.
Let $\varphi\colon F\to\dbU_n(R)$ be a group homomorphism.
We filter $R\langle\langle X\rangle\rangle$ by the powers of $\grd_R$.
Its universal property then gives rise to a unique  $R$-algebra homomorphism $\hat\varphi\colon R\langle\langle X\rangle\rangle\rightarrow T_{n,0}(R)$ which is compatible with the filtrations, and such that $\hat\varphi(x)=\varphi(x)-I_n\in T_{n,1}(R)$ for $x\in X$.
Thus  $\varphi(\grd_R)\subseteq T_{n,1}(R)$.

For $1\leq i<j\leq n$ let $\varphi_{ij}\colon F\to R$ be the composition of $\varphi$ with the projection on the $(i,j)$-entry of $\dbU_n(R)$.

Also let $X^*$ be the free monoid on $X$, i.e., the set of finite words in the alphabet $X$.
We denote the length of a word $w\in X^*$ by $|w|$.
Given $g\in F$ we write
\[
\mu_R(g)=\sum_{w\in X^*}\mu_{R,w}(g)w,
\]
where $\mu_{R,w}$ is the coefficient of the Magnus homomorphism $\mu_R\colon F\to R\langle\langle X\rangle\rangle^\times$ at $w$ (see \S\ref{section on garded Lie algebras}).
The fact that $\mu_R$ is a group homomorphism implies, as in \cite{Efrat14a}*{Lemma 7.5}, the following lemma:

\begin{lem}
\label{intlem}
Let $w=(x_1\cdots x_{n-1})\in X^*$ be a word of length $n-1$.
The map $\varphi_{R,w}\colon F\rightarrow\dbU_n(R)$, defined by $(\varphi_{R,w})_{ij}=\mu_{R,(x_i\cdots x_{j-1})}$ for $1\leq i<j\leq n$, is a group homomorphism.
\end{lem}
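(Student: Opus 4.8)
The plan is to verify multiplicativity of $\varphi_{R,w}$ directly, by reading off what the identity $\mu_R(gh)=\mu_R(g)\mu_R(h)$ says about individual coefficients. First I would record the coefficient form of that identity: expanding $\mu_R(g)=\sum_{v\in X^*}\mu_{R,v}(g)v$ and comparing the coefficient of a fixed word $v$ on the two sides of $\mu_R(gh)=\mu_R(g)\mu_R(h)$ gives, for all $g,h\in F$,
\[
\mu_{R,v}(gh)=\sum_{\substack{v',v''\in X^*\\ v'v''=v}}\mu_{R,v'}(g)\,\mu_{R,v''}(h),
\]
the (finite) sum ranging over the factorizations of $v$ into a prefix $v'$ and a suffix $v''$. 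I would also note that, since $\mu_R(g)\in 1+\grd_R$, the coefficient of the empty word in $\mu_R(g)$ is $1$; hence completing $\varphi_{R,w}(g)$ to a matrix by putting $1$'s on the diagonal and $0$'s below it lands it in $\dbU_n(R)$, and for all $1\le i\le j\le n$ one may write uniformly $(\varphi_{R,w}(g))_{ij}=\mu_{R,(x_i\cdots x_{j-1})}(g)$, where $x_i\cdots x_{j-1}$ denotes the empty word when $i=j$.

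The key step is then the elementary observation that the prefix--suffix factorizations of the subword $x_i\cdots x_{j-1}$ of $w$ are exactly the pairs $\bigl(x_i\cdots x_{k-1},\,x_k\cdots x_{j-1}\bigr)$ for $k=i,i+1\nek j$, with $k$ recording the length $k-i$ of the prefix. Feeding $v=x_i\cdots x_{j-1}$ into the coefficient identity then gives
\[
(\varphi_{R,w}(gh))_{ij}=\mu_{R,(x_i\cdots x_{j-1})}(gh)=\sum_{k=i}^{j}\mu_{R,(x_i\cdots x_{k-1})}(g)\,\mu_{R,(x_k\cdots x_{j-1})}(h)=\sum_{k=i}^{j}(\varphi_{R,w}(g))_{ik}\,(\varphi_{R,w}(h))_{kj},
\]
and since $(\varphi_{R,w}(g))_{ik}=0$ for $k<i$ and $(\varphi_{R,w}(h))_{kj}=0$ for $k>j$, the right-hand side is the $(i,j)$-entry of the product $\varphi_{R,w}(g)\varphi_{R,w}(h)$. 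As this holds for all $1\le i\le j\le n$ (both sides vanishing when $i>j$), I conclude $\varphi_{R,w}(gh)=\varphi_{R,w}(g)\varphi_{R,w}(h)$, as asserted.

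I do not expect any genuine obstacle; the one place to be careful is the bookkeeping in the last computation --- matching factorizations of the fixed word $x_i\cdots x_{j-1}$ with the index $k$, and keeping in mind that the coefficient identity sums over \emph{ordered} factorizations, so that repetitions among the letters $x_1\nek x_{n-1}$ produce no double counting (each split position contributes exactly one term). Alternatively, and perhaps more cleanly, one can phrase the argument as follows: $g\mapsto\varphi_{R,w}(g)$ is the composite of $\mu_R$ with the $R$-algebra homomorphism $R\langle\langle X\rangle\rangle\to T_{n,0}(R)$ sending a power series $\alpha$ to the upper-triangular matrix whose $(i,j)$-entry is the coefficient of $\alpha$ at $x_i\cdots x_{j-1}$; this map respects multiplication by exactly the computation above and carries $1+\grd_R$ into $\dbU_n(R)$, whence the claim. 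This is the viewpoint of \cite{Efrat14a}*{Lemma 7.5}.
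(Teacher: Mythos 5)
Your proof is correct and follows exactly the route the paper intends: the paper gives no written proof, merely noting that the lemma follows from the multiplicativity of $\mu_R$ as in \cite{Efrat14a}*{Lemma~7.5}, and your coefficient computation (reading off the prefix--suffix factorizations of $x_i\cdots x_{j-1}$ from $\mu_R(gh)=\mu_R(g)\mu_R(h)$) is precisely that argument spelled out. Your closing reformulation via the $R$-algebra homomorphism $R\langle\langle X\rangle\rangle\to T_{n,0}(R)$ also matches the machinery the paper sets up around the lemma.
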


\begin{prop}
\label{inclusion of intersections}
For every positive integer $n$ we have
\[
\mu_R\inv(1+\grd_R^n)=
\bigcap\Bigl\{\Ker(\varphi) \Bigm| \varphi\in\Hom(F,\dbU_n(R))\Bigr\}
=\bigcap_{{w\in X^*}\atop{|w|=n-1}}\Ker(\varphi_{R,w}).
\]
\end{prop}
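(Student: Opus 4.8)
The plan is to prove a chain of three containments in the cycle, establishing the equality of all three sets. First I would show $\mu_R\inv(1+\grd_R^n)\subseteq\bigcap_{|w|=n-1}\Ker(\varphi_{R,w})$. Fix a word $w=x_1\cdots x_{n-1}\in X^*$ and let $g\in\mu_R\inv(1+\grd_R^n)$; then $\mu_R(g)=1+\alpha$ with $\alpha\in\grd_R^n$, so every coefficient $\mu_{R,u}(g)$ of $\mu_R(g)$ at a word $u$ of length $<n$ and length $\geq 1$ vanishes, while $\mu_{R,\emptyset}(g)=1$. By Lemma \ref{intlem} the homomorphism $\varphi_{R,w}$ has $(i,j)$-entry $\mu_{R,(x_i\cdots x_{j-1})}(g)$ for $i<j$; since $1\le j-i\le n-1$, the word $x_i\cdots x_{j-1}$ has length between $1$ and $n-1$, so all strictly-upper entries of $\varphi_{R,w}(g)$ are $0$ and $\varphi_{R,w}(g)=I_n$. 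Hence $g\in\Ker(\varphi_{R,w})$.

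The second containment, $\bigcap_{|w|=n-1}\Ker(\varphi_{R,w})\subseteq\bigcap\{\Ker\varphi\mid\varphi\in\Hom(F,\dbU_n(R))\}$, is trivial since each $\varphi_{R,w}$ is itself a homomorphism $F\to\dbU_n(R)$, so the left-hand intersection is over a subfamily and hence contains the right-hand one; wait — it is the reverse: intersecting over a larger family gives a smaller set, so $\bigcap\{\Ker\varphi\}\subseteq\bigcap_{|w|=n-1}\Ker(\varphi_{R,w})$ holds automatically. Thus the genuine content is the containment $\bigcap\{\Ker\varphi\mid\varphi\colon F\to\dbU_n(R)\}\subseteq\mu_R\inv(1+\grd_R^n)$, which closes the cycle together with the first step. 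For this, let $g\in F$ lie in every such kernel; I must show $\mu_R(g)\in 1+\grd_R^n$, i.e. $\mu_{R,w}(g)=0$ for every word $w$ with $1\le|w|\le n-1$. Given such a $w$, pad it on the right (by any letters, or repeat a letter) to a word $w'=x_1\cdots x_{n-1}$ of length exactly $n-1$ whose initial segment of length $|w|$ is $w$; then by Lemma \ref{intlem} the $(1,|w|+1)$-entry of $\varphi_{R,w'}(g)$ is exactly $\mu_{R,(x_1\cdots x_{|w|})}(g)=\mu_{R,w}(g)$, and since $\varphi_{R,w'}(g)=I_n$ by hypothesis, this coefficient vanishes.

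The one subtlety is the degenerate case $n=1$: there $\grd_R^1=\grd_R$, $1+\grd_R=\mu_R(F)$'s target is all of $1+\grd_R$ and $\mu_R\inv(1+\grd_R)=F$, while $\dbU_1(R)$ is trivial and $X^*$ has a unique word of length $0$ (the empty word) with $\varphi_{R,\emptyset}$ the trivial map; all three sets equal $F$, so the statement holds. I would dispatch this line at the start, then assume $n\geq 2$ for the main argument (matching the standing hypothesis of the section).

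The main obstacle is purely bookkeeping: making precise that the coefficients of $\mu_R(g)$ on words of length $1,\dots,n-1$ are simultaneously detected by the entries of the homomorphisms $\varphi_{R,w}$ — this is where Lemma \ref{intlem} does the work, and one must check the index ranges carefully so that every relevant word of length $\le n-1$ appears as some $x_i\cdots x_{j-1}$ (equivalently, as a factor of a suitable padded word $w'$) and conversely every entry of $\varphi_{R,w}$ corresponds to a word of length $\le n-1$. No hard algebra is involved; the proof is a two-way translation between the filtration $1+\grd_R^n$ on the Magnus side and the unipotent-matrix side, exactly parallel to \cite{Efrat14a}*{Lemma 7.5} and its consequences.
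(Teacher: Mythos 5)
Your three containments do not close the cycle, and the set that gets left out is precisely the middle one. Write $A=\mu_R\inv(1+\grd_R^n)$, $B=\bigcap\{\Ker(\varphi)\mid\varphi\in\Hom(F,\dbU_n(R))\}$, and $C=\bigcap_{|w|=n-1}\Ker(\varphi_{R,w})$. You prove $A\subseteq C$ (step 1), you correctly observe that $B\subseteq C$ is trivial, and your final step -- although you announce it as $B\subseteq A$ -- only ever uses that $g$ lies in the kernels of the special homomorphisms $\varphi_{R,w'}$, so what it actually establishes is $C\subseteq A$. Altogether this gives $A=C$ and $B\subseteq C$, but nothing forces $C\subseteq B$: you never show that an element of $\mu_R\inv(1+\grd_R^n)$ is killed by an \emph{arbitrary} homomorphism $\varphi\colon F\to\dbU_n(R)$, which need not be of the form $\varphi_{R,w}$ (its values $\varphi(x)$ for $x\in X$ can be arbitrary unipotent matrices). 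Your hesitation in the second paragraph is the symptom: having noticed that the trivial inclusion runs the "wrong way," you concluded the remaining content was $B\subseteq A$, when in fact the remaining content is $A\subseteq B$.

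This missing inclusion is exactly the first paragraph of the paper's proof, and it requires an idea your proposal does not contain: extend $\varphi$ to the filtered $R$-algebra homomorphism $\hat\varphi\colon R\langle\langle X\rangle\rangle\to T_{n,0}(R)$ with $\hat\varphi(x)=\varphi(x)-I_n\in T_{n,1}(R)$, note that $\hat\varphi(\grd_R^n)\subseteq T_{n,1}(R)^n=\{0\}$, and use $\varphi=\hat\varphi\circ\mu_R$ on $F$ to conclude that $\mu_R(g)\in 1+\grd_R^n$ forces $\varphi(g)=I_n$. (As a bonus, this argument makes your step 1 redundant, since $A\subseteq B\subseteq C$ then comes for free.) Your steps 1 and 3 are otherwise correct and coincide with the paper's treatment of $C\subseteq A$ via prefix extension, and your handling of the degenerate case $n=1$ is fine; the gap is solely the absence of any argument covering general unipotent representations.
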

\begin{proof}
Take $\varphi\in\Hom(F,\dbU_n(R))$ and let $\hat\varphi\colon R\langle\langle X\rangle\rangle\to T_{n,0}(R)$ be an $R$-algebra homomorphism as above.
Then $\hat\varphi(\grd_R^n)\subseteq T_{n,1}(R)^n=\{0\}$.
Since $\varphi=\hat\varphi\circ\mu_R$ on $F$, this implies that $\mu_R\inv(1+\grd_R^n)\subseteq\Ker(\varphi)$.

The middle intersection is trivially contained in the right intersection.

Finally, take $g$ in the right intersection.
Every word $u\in X^*$ of length $k$, with $1\leq k\leq n-1$, can be extended to a word $w\in X^*$ of length $n-1$ with prefix $u$.
Then $\mu_{R,u}(g)=(\varphi_{R,w}(g))_{1,k+1}=0$.
We conclude that $g\in \mu_R\inv(1+\grd_R^n)$.
\end{proof}

We can now add to Theorem \ref{first main theorem} the following characterization of $\mu_R\inv(1+\grd_R^{(e,n)})$ in terms of linear representations by upper-triangular unipotent matrices.

\begin{thm}
\label{intersection thm}
Let $e$ be multiplicatively descending map.
Then
\[
\begin{split}
\mu_R\inv(1+\grd_R^{(e,n)})&=\bigcap_{d=1}^{n-1}\bigcap\Bigl\{\Ker(\varphi)\ \Bigm|\ \varphi\in\Hom(F,\dbU_{d+1}(R/e(n,d)R)\Bigr\} \\
&=\bigcap_{d=1}^{n-1}\bigcap_{{w\in X^*}\atop{|w|=d}}\Ker(\varphi_{R/e(n,d)R,w}).
\end{split}
\]
\end{thm}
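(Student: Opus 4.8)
The plan is to reduce, via \Lref{description of grd(e,n)}, to a single degree $d$ and then invoke \Pref{inclusion of intersections} over the quotient ring $R/e(n,d)R$. By \Lref{description of grd(e,n)} we have $\grd_R^{(e,n)}=\bigcap_{d=1}^n\bigl(e(n,d)\grd_R+\grd_R^{d+1}\bigr)$, and since $e(n,n)=1$ the term $d=n$ equals $\grd_R$ and may be dropped (each $e(n,d)\grd_R+\grd_R^{d+1}$ is contained in $\grd_R$), so $\grd_R^{(e,n)}=\bigcap_{d=1}^{n-1}\bigl(e(n,d)\grd_R+\grd_R^{d+1}\bigr)$; for $n=1$ both sides of the asserted identity are $F$, so there is nothing to prove. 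Because $g\in\mu_R\inv\bigl(1+\bigcap_j\gra_j\bigr)$ if and only if $\mu_R(g)-1\in\gra_j$ for every $j$, the operation $\mu_R\inv(1+\,\cdot\,)$ turns this intersection of ideals into an intersection of subgroups, and since the two candidate right-hand sides are likewise intersections over $d$ of expressions depending only on $d$, it suffices to prove, for each fixed $d$ with $1\le d\le n-1$,
\[
\mu_R\inv\bigl(1+e(n,d)\grd_R+\grd_R^{d+1}\bigr)=\bigcap\bigl\{\Ker(\varphi)\bigm|\varphi\in\Hom(F,\dbU_{d+1}(R/e(n,d)R))\bigr\}=\bigcap_{{w\in X^*}\atop{|w|=d}}\Ker(\varphi_{R/e(n,d)R,w}).
\]

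For the key step, fix $d$, put $S=R/e(n,d)R$, and let $\pi_*\colon R\langle\langle X\rangle\rangle\to S\langle\langle X\rangle\rangle$ be the surjective $R$-algebra homomorphism reducing all coefficients modulo $e(n,d)$. It carries $\grd_R$ onto $\grd_S$, hence $\grd_R^{d+1}$ onto $\grd_S^{d+1}$, and it carries $\mu_R(g)$ to $\mu_S(g)$ for every $g\in F$, since $\pi_*\circ\mu_R$ and $\mu_S$ are group homomorphisms $F\to S\langle\langle X\rangle\rangle^\times$ agreeing on $X$. I would then identify $\mu_R\inv\bigl(1+e(n,d)\grd_R+\grd_R^{d+1}\bigr)$ with $\mu_S\inv(1+\grd_S^{d+1})$ by comparing homogeneous components: the (homogeneous) ideal $e(n,d)\grd_R+\grd_R^{d+1}$ has degree-$k$ component $e(n,d)R\langle X\rangle^{(k)}$ for $1\le k\le d$ and all of $R\langle X\rangle^{(k)}$ for $k\ge d+1$, so $\mu_R(g)-1$ lies in it exactly when, for each $1\le k\le d$, the degree-$k$ component of $\mu_R(g)-1$ lies in $e(n,d)R\langle X\rangle^{(k)}$ — equivalently, $\pi_*(\mu_R(g)-1)=\mu_S(g)-1$ has vanishing components in degrees $\le d$, i.e.\ $\mu_S(g)-1\in\grd_S^{d+1}$. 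Hence $\mu_R\inv\bigl(1+e(n,d)\grd_R+\grd_R^{d+1}\bigr)=\mu_S\inv(1+\grd_S^{d+1})$.

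Finally I apply \Pref{inclusion of intersections} to the ring $S$ and the integer $d+1$, rewriting $\mu_S\inv(1+\grd_S^{d+1})$ as $\bigcap\{\Ker(\psi)\mid\psi\in\Hom(F,\dbU_{d+1}(S))\}=\bigcap_{|w|=d}\Ker(\varphi_{S,w})$; this is precisely the fixed-$d$ statement displayed above, and intersecting over $d=1,\dots,n-1$ completes the proof. The one point requiring care — the nearest thing to an obstacle — is the quotient step: one must be sure that reduction modulo $e(n,d)$ matches $e(n,d)\grd_R+\grd_R^{d+1}$ with $\grd_S^{d+1}$ after restricting to the image of $\mu_R$, and organizing that comparison through homogeneous components (rather than manipulating the ideals directly inside the power-series ring) keeps it clean. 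Everything else is a formal assembly of \Lref{description of grd(e,n)} and \Pref{inclusion of intersections}; note in particular that, in contrast with \Tref{first main theorem}, this result uses only that $e$ is multiplicatively descending — condition (ii) entering through \Lref{description of grd(e,n)} — and does not need the binomial hypothesis.
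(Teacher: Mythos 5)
Your proposal is correct and follows the paper's own proof exactly: reduce via \Lref{description of grd(e,n)} to the intersection of the ideals $e(n,d)\grd_R+\grd_R^{d+1}$, identify $\mu_R\inv(1+e(n,d)\grd_R+\grd_R^{d+1})$ with $\mu_{R/e(n,d)R}\inv(1+\grd_{R/e(n,d)R}^{d+1})$, and apply \Pref{inclusion of intersections} over the quotient ring. The only difference is that you spell out, via homogeneous components, the reduction-mod-$e(n,d)$ step that the paper states without justification.
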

\begin{proof}
By Lemma \ref{description of grd(e,n)},
$\grd_R^{(e,n)}=\bigcap_{d=1}^n(e(n,d)\grd_R+\grd_R^{d+1})$.
Therefore
\[
\mu_R\inv(1+\grd_R^{(e,n)})=\bigcap_{d=1}^n\mu_{R/e(n,d)R}\inv(1+\grd_{R/e(n,d)R}^{d+1}).
\]
The Theorem now follows from Proposition \ref{inclusion of intersections}.
\end{proof}

\begin{exam}
\rm
Let $e(n,i)$ be the trivial multiplicatively descending map (Example \ref{trivial example}).
Then, by Proposition \ref{inclusion of intersections},
\begin{equation}
\label{yyy}
\mu_R\inv(1+\grd_R^{(e,n)})=\mu_R\inv(1+\grd_R^n)=\bigcap\Bigl\{\Ker(\varphi)\ |\ \varphi\in\Hom(F,\dbU_n(R))\Bigr\}.
\end{equation}

In particular, when $R=\dbZ$, the group $\mu_\dbZ\inv(1+\grd_\dbZ^{(e,n)})$ is the $n$-th term $F^{(n,0)}$ of the lower central series of $F$ (see (\ref{MagnusWitt})).
Then (\ref{yyy}) is essentially due to Gr\"un \cite{Grun36} (see also \cite{Rohl85}; note that Gr\"un works with \textsl{lower-}triangular unipotent matrices).
\end{exam}

\begin{exam}
\rm
Let $a$ be a non-negative integer and consider the multiplicatively descending map $e(n,i)=a^{n-i}$ (Example \ref{lower q-central series}).
We obtain that
\[
\mu_\dbZ\inv(1+\grd_\dbZ^{(e,n)})=\bigcap_{d=1}^{n-1}\bigcap\Bigl\{\Ker(\varphi)\ \Bigm|\ \varphi\in\Hom(F,\dbU_{d+1}(\dbZ/a^{n-d}\dbZ)\Bigr\}.
\]
When $a=p$ is a prime number, $\mu_\dbZ\inv(1+\grd_\dbZ^{(e,n)})$ is the $n$-th term $F^{(n,p)}$ in the lower $p$-central filtration of $F$, by the results of Skopin and Koch (see Example \ref{examples for first theorem - A filtration}).
Thus, in this special case, the first equality in Theorem \ref{intersection thm} recovers the characterization of $F^{(n,p)}$ due to Min\'a\v c and T\^an \cite{MinacTan15}*{Th.\ 2.7(c)} mentioned in (III)$^p$ of the Introduction
(an equivalent characterization was independently proved in \cite{Efrat14b}).
\end{exam}

\begin{exam}
\rm
Let $R=\dbZ$, let $t$ be a positive integer, and let $p$ be a prime number.
Consider the multiplicatively descending map $e(n,i)=p^{tj(n,i)}$ of Example \ref{log pni}, where $j(n,i)=\lceil\log_p\bigl(\frac ni\bigr)\rceil$.
Then $\mu_\dbZ\inv(1+\grd_\dbZ^{(e,n)})$ is the $n$th term $F_{(n,p)}$ in the $p$-Zassenhaus filtration (see  Example \ref{examples for first theorem - Zassenhaus}).
We obtain from Theorem \ref{intersection thm} that
\[
F_{(n,p)}=\bigcap_{d=1}^{n-1}\Bigl\{\Ker(\varphi)\ |\ \varphi\in\Hom(F,\dbU_{d+1}(\dbZ/p^{tj(n,d)}\dbZ))\Bigr\}.
\]
\end{exam}

\begin{exam}
\rm
Let $R=\dbF_p$ for a prime number $p$ and let $e(n,i)$ be the trivial multiplicatively descending map.
Then $\mu_{\dbF_p}\inv(1+\grd_{\dbF_p}^{(e,n)})=\mu_{\dbF_p}\inv(1+\grd_{\dbF_p}^n)$ is again the $n$-th term $F_{(n,p)}$ of the $p$-Zassenhaus filtration of $F$ (see (I)$_p$ in the Introduction).
Hence Proposition \ref{inclusion of intersections} gives
\begin{equation}
\label{Zassenhaus as kernel intersection}
F_{(n,p)}=\bigcap\{\Ker(\varphi)\ |\ \varphi\in\Hom(F,\dbU_n(\dbF_p))\}.
\end{equation}
A profinite version of (\ref{Zassenhaus as kernel intersection})  was proved in \cite{Efrat14a}*{Th.\ A} as a special case of a more general result on Massey products in profinite cohomology.
More direct alternative proofs were later given in \cite{Efrat14b}*{Ex.\ 6.4} (also in the discrete setting) and \cite{MinacTan15}*{Th.\ 2.7}.
\end{exam}

We conclude this section with some terminology and facts that will be needed in \S\ref{section on Massey products}.
Let $Z_n(R)$ be the subgroup of $\dbU_n(R)$ consisting of all unipotent matrices which are zero except for the main diagonal and (possibly) entry $(1,n)$.
It lies in the center of $\dbU_n(R)$, so we may define
\[
\bar\dbU_n(R)=\dbU_n(R)/Z_n(R).
\]
We may view the elements of  $\bar\dbU_n(R)$ as upper-triangular unipotent $n\times n$-matrices without the $(1,n)$-entry.

Given a word $w=(x_1\cdots x_n)$, let $\bar\varphi_w\colon F\to\bar \dbU_{n+1}(R)$ be the homomorphism induced by $\varphi_w$.
The following proposition complements Proposition \ref{inclusion of intersections}.

\begin{prop}
\label{kernel intersection to bar U}
For $n\geq1$ we have $\mu_R\inv(1+\grd_R^n)=\bigcap_{{w\in X^*}\atop{|w|=n}}\Ker(\bar\varphi_w)$.
\end{prop}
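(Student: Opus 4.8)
The plan is to translate both sides into the same explicit condition on the Magnus coefficients $\mu_{R,u}(g)$, $u\in X^*$, of an element $g\in F$. First I would record the elementary fact that $g\in\mu_R\inv(1+\grd_R^n)$ if and only if $\mu_{R,u}(g)=0$ for every word $u$ with $1\le|u|\le n-1$; this follows from the grading of $R\langle\langle X\rangle\rangle$ by total degree together with the observation that $\mu_R(g)$ always has constant term $1$, so the degree-$0$ component never obstructs membership in $1+\grd_R^n$.

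Next I would unwind the right-hand side. Fix a word $w=(x_1\cdots x_n)\in X^*$ of length $n$. By definition (cf.\ Lemma \ref{intlem}), $\varphi_w\colon F\to\dbU_{n+1}(R)$ has $(i,j)$-entry $\mu_{R,(x_i\cdots x_{j-1})}$ for $1\le i<j\le n+1$, and $\bar\varphi_w$ is $\varphi_w$ followed by the quotient $\dbU_{n+1}(R)\to\dbU_{n+1}(R)/Z_{n+1}(R)$. Since $Z_{n+1}(R)$ consists of the unipotent matrices that are zero off the diagonal except possibly in the $(1,n+1)$-slot, membership $g\in\Ker(\bar\varphi_w)$ holds exactly when $\mu_{R,(x_i\cdots x_{j-1})}(g)=0$ for every $(i,j)$ with $1\le i<j\le n+1$ and $(i,j)\ne(1,n+1)$.

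The two inclusions are then short. For "$\subseteq$", suppose $\mu_{R,u}(g)=0$ for all $u$ with $1\le|u|\le n-1$. Given any $w$ of length $n$ and any pair $(i,j)\ne(1,n+1)$ with $1\le i<j\le n+1$, the subword $(x_i\cdots x_{j-1})$ has length $j-i$, which lies between $1$ and $n-1$ (the only such pair with $j-i=n$ is $(1,n+1)$); hence that entry of $\varphi_w(g)$ vanishes, so $g\in\Ker(\bar\varphi_w)$, and as $w$ was arbitrary $g$ lies in the intersection. For "$\supseteq$", let $g$ lie in the intersection and let $u$ be a word with $1\le|u|=k\le n-1$. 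Pad $u$ on the right by $n-k$ arbitrary letters of $X$ to obtain a word $w=(x_1\cdots x_n)$ of length $n$ with $u=(x_1\cdots x_k)$ (if $X=\emptyset$ there is nothing to prove, both sides being the trivial group). The $(1,k+1)$-entry of $\varphi_w(g)$ is $\mu_{R,u}(g)$, and $(1,k+1)\ne(1,n+1)$ because $k<n$; thus $g\in\Ker(\bar\varphi_w)$ forces $\mu_{R,u}(g)=0$. Together with the first step, this gives $g\in\mu_R\inv(1+\grd_R^n)$.

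This argument is purely a matter of bookkeeping, so I do not anticipate a genuine obstacle; the one point that must be handled with care is the index juggling, namely that deleting the single $(1,n+1)$-entry of an $(n+1)\times(n+1)$ unipotent matrix discards precisely the one degree-$n$ coefficient $\mu_{R,w}(g)$ while retaining all coefficients of degree $\le n-1$, and that every word of length $\le n-1$ occurs as a contiguous proper subword of some word of length $n$.
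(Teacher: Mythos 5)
Your argument is correct, and it runs on the same underlying computation as the paper's: the identification of the entries of $\varphi_w$ with Magnus coefficients of contiguous subwords, together with the padding trick that realizes every word of length at most $n-1$ as a proper contiguous subword of some word of length $n$. The difference is one of organization rather than substance. The paper recomputes nothing: it records the single identity $\Ker(\bar\varphi_{(x_1\cdots x_n)})=\Ker(\varphi_{(x_1\cdots x_{n-1})})\cap\Ker(\varphi_{(x_2\cdots x_n)})$ (deleting the $(1,n+1)$-entry leaves exactly the entries coming from the two truncations of $w$), deduces $\bigcap_{|w|=n}\Ker(\bar\varphi_w)=\bigcap_{|w|=n-1}\Ker(\varphi_w)$, and then quotes Proposition~\ref{inclusion of intersections}, which already identifies the latter intersection with $\mu_R\inv(1+\grd_R^n)$. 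You instead re-derive the content of Proposition~\ref{inclusion of intersections} inline by translating both sides into the vanishing of $\mu_{R,u}(g)$ for $1\le|u|\le n-1$; this is self-contained and makes fully explicit that only the one degree-$n$ coefficient is discarded, at the cost of repeating an argument the paper has already made. One point to keep in mind if you retain your route: the opening equivalence ``$g\in\mu_R\inv(1+\grd_R^n)$ iff $\mu_{R,u}(g)=0$ for all $u$ with $1\le|u|\le n-1$'' rests on the fact that $\grd_R^n$ consists precisely of the series with no terms of degree $<n$; this is clear for finite $X$ and is exactly the same fact the paper uses inside the proof of Proposition~\ref{inclusion of intersections}, so you are on equal footing there.
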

\begin{proof}
For $x_1,x_2\nek x_{n-1},x_n\in X$ one has inclusions
\[
\Ker(\varphi_{(x_1\cdots x_{n-1})})\supseteq \Ker(\bar\varphi_{(x_1\cdots x_n)})=\Ker(\varphi_{(x_1\cdots x_{n-1})})\cap \Ker(\varphi_{(x_2\cdots x_n)}).
\]
It follows that
\[
\bigcap_{{w\in X^*}\atop{|w|=n-1}}\Ker(\varphi_w)=\bigcap_{{w\in X^*}\atop{|w|=n}}\Ker(\bar\varphi_w).
\]
Now apply Proposition \ref{inclusion of intersections}.
\end{proof}

It follows from Proposition \ref{kernel intersection to bar U} that, for every $w\in X^*$ of length $n\geq1$,
the restriction of $\varphi_w$ to $\mu_R\inv(1+\grd_R^n)$ is into $Z_{n+1}(R)$.
We therefore obtain:

\begin{cor}
\label{restriction of mu hom}
The map $\mu_R\inv(1+\grd_R^n)\to R$,  $g\mapsto \varphi_w(g)_{1,n+1}$, is a group homomorphism.
\end{cor}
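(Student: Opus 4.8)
The final statement is Corollary~\ref{restriction of mu hom}, which asserts that for a fixed word $w\in X^*$ of length $n\geq1$, the map $\mu_R\inv(1+\grd_R^n)\to R$ sending $g\mapsto\varphi_w(g)_{1,n+1}$ is a group homomorphism.

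The plan is to exploit the remark made immediately before the statement: by Proposition~\ref{kernel intersection to bar U}, every $g\in\mu_R\inv(1+\grd_R^n)$ lies in $\Ker(\bar\varphi_w)$, which means precisely that $\varphi_w(g)$ has all entries off the diagonal vanishing except possibly the $(1,n+1)$-entry; that is, $\varphi_w(g)\in Z_{n+1}(R)$. So the restriction of $\varphi_w$ to $\mu_R\inv(1+\grd_R^n)$ is a group homomorphism into the abelian central subgroup $Z_{n+1}(R)\leq\dbU_{n+1}(R)$. First I would note that $Z_{n+1}(R)$ is isomorphic as a group to the additive group of $R$ via the $(1,n+1)$-entry: a matrix in $Z_{n+1}(R)$ is $I_{n+1}+rE_{1,n+1}$ for a unique $r\in R$, and since $E_{1,n+1}^2=0$ one has $(I_{n+1}+rE_{1,n+1})(I_{n+1}+sE_{1,n+1})=I_{n+1}+(r+s)E_{1,n+1}$. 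Composing the restricted homomorphism $\varphi_w|$ with this isomorphism $Z_{n+1}(R)\xrightarrow{\sim}(R,+)$, which picks out the $(1,n+1)$-entry, yields the desired map $g\mapsto\varphi_w(g)_{1,n+1}$, and it is a composite of two group homomorphisms, hence a group homomorphism.

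The only genuine content to check is the claim extracted from Proposition~\ref{kernel intersection to bar U}, namely that the restriction of $\varphi_w$ to $\mu_R\inv(1+\grd_R^n)$ really does land in $Z_{n+1}(R)$. This is exactly what the sentence preceding the corollary records: since $\mu_R\inv(1+\grd_R^n)=\bigcap_{|w|=n}\Ker(\bar\varphi_w)$, in particular every such $g$ is killed by $\bar\varphi_w$ for our chosen $w$, and $\bar\varphi_w$ is by definition $\varphi_w$ followed by the quotient $\dbU_{n+1}(R)\to\bar\dbU_{n+1}(R)=\dbU_{n+1}(R)/Z_{n+1}(R)$; so $\varphi_w(g)\in Z_{n+1}(R)$. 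There is no real obstacle here — the corollary is essentially a repackaging of Proposition~\ref{kernel intersection to bar U} together with the elementary identification of the central subgroup $Z_{n+1}(R)$ with $(R,+)$. I would write the proof in two or three sentences, citing Proposition~\ref{kernel intersection to bar U} for the containment in $Z_{n+1}(R)$ and observing that the $(1,n+1)$-entry gives a group isomorphism $Z_{n+1}(R)\isom(R,+)$.
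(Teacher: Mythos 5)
Your proposal is correct and is essentially the paper's own argument: the paper likewise deduces from Proposition~\ref{kernel intersection to bar U} that $\varphi_w$ restricted to $\mu_R\inv(1+\grd_R^n)$ lands in the central subgroup $Z_{n+1}(R)$, and then reads off the $(1,n+1)$-entry, which identifies $Z_{n+1}(R)$ with $(R,+)$. Nothing is missing.
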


\section{Inductive definitions of filtrations}
\label{section on inductive definitions}
We now turn to study inductive definitions of the general filtrations discussed in \S\ref{section on powers of lcs},
similarly to what we had for the lower $p$-central series and $p$-Zassenhaus filtrations.
First we develop in the current section a general machinery for such inductive constructions.
In \S\ref{section on A filtration} and \S\ref{section on Zassenhaus filtration} we apply it to obtain the above known examples, and extend them to new cases.

Suppose that $T\subseteq\dbZ_{\ge1}^2$, and let  $f\colon \dbZ_{\geq2}\to\dbZ_{\geq0}$, $g\colon \dbZ_{\geq2}\to\dbZ_{\geq1}$ be maps such that $g(n)<n$ for every $n\geq2$.
For a group $G$ we define inductively subgroups $G_{(n)}$, $n=1,2\nek$ by
\begin{equation}
\label{recursive definition}
G_{(1)}=G, \quad
G_{(n)}=G_{(g(n))}^{f(n)}\prod_{(s,t)\in T,\ s+t=n}[G_{(s)},G_{(t)}].
\end{equation}
The subgroups $G_{(n)}$ are characteristic, whence normal in $G$.
Note that the sequence $G_{(n)}$, $n=1,2\nek$ need not be decreasing.

Let $e(n,i)$ be a multiplicatively descending map, and assume that:
\begin{enumerate}
\item[(1)]
For every $(s,t)\in T$,  $1\leq i\leq s$ and $1\leq j\leq t$ one has
$e(s,i)e(t,j)\in e(s+t,i+j)\dbZ$.
\item[(2)]
For every $n\geq2$ and $1\leq j_1\nek j_l\leq g(n)$ such that $1\leq l\leq f(n)$ and $j_1+\cdots+j_l\leq n$ one has
\[
\binom{f(n)}l e(g(n),j_1)\cdots e(g(n),j_l)\in e(n,j_1+\cdots+j_l)\dbZ.
\]
\end{enumerate}

\begin{rem}
\label{meaning of (1), (2)}
\rm
Condition (1) implies that  $\grd_\dbZ^{(e,s)}\grd_\dbZ^{(e,t)},\grd_\dbZ^{(e,t)}\grd_\dbZ^{(e,s)}\subseteq\grd_\dbZ^{(e,s+t)}$ for $(s,t)\in T$.
Condition (2) implies that $\binom{f(n)}l\alp^l\in \grd_\dbZ^{(e,n)}$ for every $n\geq2$, $1\leq l\leq f(n)$, and $\alp\in \grd_\dbZ^{(e,g(n))}$.
\end{rem}

\begin{thm}
\label{rec sequence inclusion}
Let $F=F_X$ be a free group.
Then $F_{(n)}\subseteq\mu_\dbZ\inv(1+\grd_\dbZ^{(e,n)})$ for every $n$.
\end{thm}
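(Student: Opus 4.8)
The plan is to argue by induction on $n$. The base case $n=1$ is trivial since $F_{(1)}=F$ and $\grd_\dbZ^{(e,1)}=e(1,1)\grd_\dbZ=\grd_\dbZ$, while also $F\subseteq\mu_\dbZ\inv(1+\grd_\dbZ)$ because $\mu_\dbZ(x)=1+x\in 1+\grd_\dbZ$ generate the image. For the inductive step, assume $F_{(m)}\subseteq\mu_\dbZ\inv(1+\grd_\dbZ^{(e,m)})$ for all $m<n$; in view of the defining formula
\[
F_{(n)}=F_{(g(n))}^{f(n)}\prod_{(s,t)\in T,\ s+t=n}[F_{(s)},F_{(t)}],
\]
and since $\mu_\dbZ\inv(1+\grd_\dbZ^{(e,n)})$ is a subgroup of $F$ (Lemma \ref{inverse in Magnus ring} together with Lemma \ref{subgroup}, or directly because $1+\grd_\dbZ^{(e,n)}$ is multiplicatively closed), it suffices to show that each generator of $F_{(n)}$ lies in $\mu_\dbZ\inv(1+\grd_\dbZ^{(e,n)})$. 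There are two types of generators to handle.

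For the commutator generators, take $(s,t)\in T$ with $s+t=n$, and let $g\in F_{(s)}$, $h\in F_{(t)}$. By the induction hypothesis $\mu_\dbZ(g)=1+\alp$ with $\alp\in\grd_\dbZ^{(e,s)}$ and $\mu_\dbZ(h)=1+\beta$ with $\beta\in\grd_\dbZ^{(e,t)}$. A direct computation with the commutator $[g,h]=g\inv h\inv gh$, using that $(1+\alp)\inv=1-\alp+\alp^2-\cdots$ lies in $1-\alp+\grd_\dbZ^{(e,s)}\grd_\dbZ$ and similarly for $\beta$, expresses $\mu_\dbZ([g,h])-1$ as a sum of products each of which contains a factor from $\grd_\dbZ^{(e,s)}$ and a factor from $\grd_\dbZ^{(e,t)}$; by the first part of Remark \ref{meaning of (1), (2)} (i.e.\ condition (1)), all such products lie in $\grd_\dbZ^{(e,s+t)}=\grd_\dbZ^{(e,n)}$. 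Hence $[g,h]\in\mu_\dbZ\inv(1+\grd_\dbZ^{(e,n)})$.

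For the power generators, take $g\in F_{(g(n))}$, so by induction $\mu_\dbZ(g)=1+\alp$ with $\alp\in\grd_\dbZ^{(e,g(n))}$. Then
\[
\mu_\dbZ\bigl(g^{f(n)}\bigr)=(1+\alp)^{f(n)}=1+\sum_{l=1}^{f(n)}\binom{f(n)}l\alp^l,
\]
and by the second part of Remark \ref{meaning of (1), (2)} (i.e.\ condition (2)), each term $\binom{f(n)}l\alp^l$ lies in $\grd_\dbZ^{(e,n)}$; therefore $g^{f(n)}\in\mu_\dbZ\inv(1+\grd_\dbZ^{(e,n)})$. Combining the two cases, every generator of $F_{(n)}$ lies in the subgroup $\mu_\dbZ\inv(1+\grd_\dbZ^{(e,n)})$, so $F_{(n)}\subseteq\mu_\dbZ\inv(1+\grd_\dbZ^{(e,n)})$, completing the induction.

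The main obstacle is the commutator estimate: one must be careful that forming $\mu_\dbZ(g)\inv$ does not destroy membership in the relevant ideal, and that after expanding $[g,h]$ the cross terms genuinely produce a factor from $\grd_\dbZ^{(e,s)}$ times one from $\grd_\dbZ^{(e,t)}$ rather than, say, two factors from the same ideal. This is exactly what Remark \ref{meaning of (1), (2)} is designed to supply — it packages conditions (1) and (2) into the ideal-theoretic statements $\grd_\dbZ^{(e,s)}\grd_\dbZ^{(e,t)}\subseteq\grd_\dbZ^{(e,n)}$ and $\binom{f(n)}l\grd_\dbZ^{(e,g(n))}{}^{l}\subseteq\grd_\dbZ^{(e,n)}$ — so the remaining work is a routine bookkeeping of the binomial and commutator expansions. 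Everything else (the subgroup property, the base case) is immediate from the lemmas already established.
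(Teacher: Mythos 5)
Your proposal is correct and follows essentially the same route as the paper's proof: induction on $n$, with the power generators handled by the binomial expansion via condition (2) and the commutator generators via condition (1), both through Remark \ref{meaning of (1), (2)}. The only cosmetic difference is that the paper packages your commutator bookkeeping into the single identity $[1+\alp,1+\beta]=1+(1+\alp)\inv(1+\beta)\inv(\alp\beta-\beta\alp)$, which makes the cross-term structure immediate.
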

\begin{proof}
We argue by induction on $n$.
For $n=1$ we have $\grd_\dbZ^{(e,1)}=\grd_\dbZ$, so $F_{(1)}=F=\mu_\dbZ\inv(1+\grd_\dbZ^{(e,1)})$.

Suppose that $n>1$.
For every $\alp\in \grd_\dbZ^{(e,g(n))}=\sum_{j=1}^{g(n)}e(g(n),j)\grd_\dbZ^j$ and $1\leq l\leq f(n)$, assumption (2) implies that $\binom{f(n)}l\alp^l\in\grd_\dbZ^{(e,n)}$ (see Remark \ref{meaning of (1), (2)}).
Therefore $(1+\alp)^{f(n)}=\sum_{l=0}^{f(n)}\binom{f(n)}l\alp^l\in 1+\grd_\dbZ^{(e,n)}$.
By the induction hypothesis, this shows that $F_{(g(n))}^{f(n)}\subseteq\mu_\dbZ\inv(1+\grd_\dbZ^{(e,n)})$.

Next  take $(s,t)\in T$ with $s+t=n$ and consider $\alp\in\grd^{(e,s)}$ and $\beta\in \grd^{(e,t)}$.
By (1),  $\grd_\dbZ^{(e,s)}\grd_\dbZ^{(e,t)},\grd_\dbZ^{(e,t)}\grd_\dbZ^{(e,s)}\subseteq\grd_\dbZ^{(e,n)}$ (see Remark \ref{meaning of (1), (2)}).
As $1+\alp,1+\beta\in\ZX^\times$ (Lemma \ref{inverse in Magnus ring}), we obtain that
\[
\begin{split}
[1+\alp,1+\beta]&=1+(1+\alp)\inv(1+\beta)\inv\bigl((1+\alp)(1+\beta)-(1+\beta)(1+\alp)\bigr)\\
&=1+(1+\alp)\inv(1+\beta)\inv(\alp\beta-\beta\alp)\subseteq 1+\grd_\dbZ^{(e,n)}.
\end{split}
\]
From this and the induction hypothesis we conclude that $[F_{(s)},F_{(t)}]\subseteq \mu\inv(1+\grd_\dbZ^{(e,n)})$.
Consequently, $F_{(n)}\subseteq \mu\inv(1+\grd_\dbZ^{(e,n)})$, completing the induction.
\end{proof}

\section{The $A$-filtration}
\label{section on A filtration}
Let $A=(a_i)_{i=1}^\infty$ be a sequence of non-negative integers.
Let $e$ be the multiplicatively descending map of Example \ref{e for A}.
Thus for $1\leq i<n$ we set
\[
e(n,i)=\gcd\Bigl\{\prod_{j\in J}a_j\ \Bigm|\ J\subseteq\{1,2,\nek n-1\},\ \ |J|=n-i\Bigr\},
\]
and by convention, $e(n,n)=1$.
In this section we give an inductive definition for the subgroups $\prod_{i=1}^n(G^{(i,0)})^{e(n,i)}$ of a group $G$.
Namely, we show that they coincide with the \textit{$A$-filtration}  $G^{(n,A)}$, $n=1,2\nek$ of $G$  (Theorem \ref{closed formula for A filtration}).
Recall that we defined $G^{(1,A)}=G$, and
\[
G^{(n,A)}=(G^{(n-1,A)})^{a_{n-1}}[G^{(n-1,A)},G],
\]
for every $n>1$.
For this we use the general framework of  \S\ref{section on inductive definitions} with \
\[
f(n)=a_{n-1},  \quad g(n)=n-1, \quad T=\dbZ_{\ge1}\times\{1\}.
\]

\begin{lem}
\label{(1) (2) for A-filtration}
Conditions (1) and (2) of \S\ref{section on inductive definitions} hold in this setup.
\end{lem}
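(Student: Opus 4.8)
The plan is to read off conditions (1) and (2) directly from the gcd-description of $e$ in Example~\ref{e for A}, using repeatedly the elementary fact that the gcd of a finite family of integers divides the gcd of any subfamily of it.

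\emph{Condition (1).} Here $T=\dbZ_{\ge1}\times\{1\}$, so every $(s,t)\in T$ has $t=1$, forcing $j=1$ and $e(t,j)=e(1,1)=1$; thus (1) just asserts $e(s+1,i+1)\mid e(s,i)$ for $1\le i\le s$. If $i=s$ both sides equal $1$. If $i<s$, then $e(s+1,i+1)$ is the gcd of all products $\prod_{j\in J}a_j$ with $J\subseteq\{1\nek s\}$ and $|J|=(s+1)-(i+1)=s-i$, while $e(s,i)$ is the gcd of the subfamily of those products having $J\subseteq\{1\nek s-1\}$; hence $e(s+1,i+1)\mid e(s,i)$.

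\emph{Condition (2).} I first record two divisibilities: for $1\le m\le n-1$ one has $e(n,m)\mid e(n-1,m-1)$ and $e(n,m)\mid a_{n-1}e(n-1,m)$. The first holds exactly as above, since the products of $n-m$ of the $a_j$ with $j\le n-2$ form a subfamily of the products of $n-m$ of the $a_j$ with $j\le n-1$. For the second, note that $a_{n-1}e(n-1,m)$ is the gcd of all products of $n-m$ of the $a_j$, $j\le n-1$, that contain the factor $a_{n-1}$, and each such product is divisible by $e(n,m)$. (Together these say $e(n,m)=\gcd\bigl(e(n-1,m-1),\,a_{n-1}e(n-1,m)\bigr)$, but only the divisibilities are needed.) Now fix $n\ge2$, integers $1\le j_1\nek j_l\le n-1$ with $1\le l\le a_{n-1}$, and put $m=j_1+\cdots+j_l\le n$; one must show that $e(n,m)$ divides $\binom{a_{n-1}}{l}e(n-1,j_1)\cdots e(n-1,j_l)$. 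If $m=n$ then $e(n,m)=1$ and there is nothing to prove. If $l=1$, the product equals $a_{n-1}e(n-1,m)$ with $m\le n-1$, so the second divisibility applies. If $l\ge2$ and $m<n$, then $m\ge l\ge2$; setting $j_*=\min_k j_k$, the inequality $l\,j_*\le\sum_k j_k=m$ gives $j_*\le m/l\le m/2<m$, i.e.\ $j_*\le m-1\le n-1$. Since $e$ is multiplicatively descending, iterating condition~(ii) of Definition~\ref{def: multiplicatively descending map} (in the first coordinate $n-1$) from index $j_*$ up to $m-1$ yields $e(n-1,m-1)\mid e(n-1,j_*)$, and $e(n-1,j_*)$ divides $\prod_k e(n-1,j_k)$; combined with $e(n,m)\mid e(n-1,m-1)$ this gives the claim.

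The only point worth flagging is that for $l\ge2$ the binomial coefficient $\binom{a_{n-1}}{l}$ contributes nothing beyond being an integer: the divisibility in that range is forced purely by the monotonicity built into ``multiplicatively descending'' once the crude bound $j_*\le m-1$ is in hand, and the genuine interaction with the exponent $a_{n-1}$ is confined to the one-line case $l=1$ (and the trivial case $m=n$). So there is no real obstacle here; the whole lemma amounts to comparing gcd's over nested families of products of the $a_j$.
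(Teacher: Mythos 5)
Your proof is correct and follows essentially the same route as the paper's: condition (1) by comparing gcd's over nested families of products, and condition (2) by splitting into the cases $l=1$ and $l\ge 2$ and chaining divisibilities through the monotonicity (ii) of Definition~\ref{def: multiplicatively descending map}. If anything, your $l=1$ case is handled more carefully than in the paper, which asserts the stronger (and in general false) divisibility $e(n,j_1)\mid e(n-1,j_1)$ (e.g.\ $n=3$, $j_1=1$, $a_1=2$, $a_2=3$), whereas you correctly retain the essential factor $a_{n-1}=\binom{a_{n-1}}{1}$.
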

\begin{proof}
For condition (1) observe that $e(s,i)e(1,1)=e(s,i)\in e(s+1,i+1)\dbZ$ for every $1\leq i\leq s$.

For condition (2), take $1\leq j_1\nek j_l\leq n-1$ with $1\leq l\leq a_{n-1}$ and $j_1+\cdots+j_l\leq n$.
We need to show that
\[
\binom{a_{n-1}}le(n-1,j_1)\cdots e(n-1,j_l)\in e(n,j_1+\cdots+j_l)\dbZ.
\]
When $l=1$ we have in fact $e(n-1,j_1)\in e(n,j_1)\dbZ$.
When $2\leq l\leq a_{n-1}$ we have $j_1+1\leq j_1+\cdots+j_l$, so
\[
e(n-1,j_1)\cdots e(n-1,j_l)\in e(n-1,j_1)\dbZ\subseteq e(n,j_1+1)\dbZ\subseteq e(n,j_1+\cdots+j_l)\dbZ.
\qedhere
\]
\end{proof}

Next we recall some basic facts about commutators of subgroups.

\begin{lem}
\label{comm}
Let $H$ be a normal subgroup of $G$ and let $a,b$ be non-negative integers.
Then:
\begin{enumerate}
\item[(a)]
$[H^a,G]\equiv [H,G]^a\pmod{[[H,G],G]}$.
\item[(b)]
$(H^a[H,G])^b[H^a[H,G],G]=(H^b[H,G])^a[H^b[H,G],G]$.
\end{enumerate}
\end{lem}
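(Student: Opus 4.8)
The plan is to prove (a) first by a direct commutator-identity computation, and then derive (b) from (a) together with a symmetric observation. For part (a), I would argue by induction on $a$, the base case $a=0$ being trivial. The key identity is $[xy,g]=[x,g]^y[y,g]$, which for $x=h^{a-1}$, $y=h$ with $h\in H$ gives $[h^a,g]=[h^{a-1},g]^h[h,g]$. Since $H$ is normal, $[h^{a-1},g]\in[H,G]$, and conjugating an element of $[H,G]$ by $h\in H\subseteq G$ changes it only modulo $[[H,G],G]$ (because $z^h z^{-1}=[z^{-1},h^{-1}]^{-1}\cdot$ --- more simply, $z^h\equiv z\pmod{[[H,G],G]}$ for $z\in[H,G]$). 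Combining this with the induction hypothesis $[h^{a-1},g]\equiv[h,g]^{a-1}\pmod{[[H,G],G]}$, and using that $[H,G]/[[H,G],G]$ is abelian so products of commutators multiply exponents additively, yields $[h^a,g]\equiv[h,g]^a$. Finally one passes from single generators $h,g$ to the full subgroups $[H^a,G]$ and $[H,G]^a$: both are generated (modulo $[[H,G],G]$) by the classes of $[h,g]$ as $h$ ranges over $H$ and $g$ over $G$, and the congruence above identifies the generating sets, so the subgroups they generate agree modulo $[[H,G],G]$.

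For part (b), set $N=H^a[H,G]$ and $M=H^b[H,G]$; these are normal in $G$ since $H$ is. I would show both sides of (b) equal the same subgroup by expanding. The left side is $N^b[N,G]$. Now $N^b=(H^a[H,G])^b$, and since $[H,G]$ is normal and contained in $N$, modulo $[N,G]$ we have $N^b\equiv H^{ab}[H,G]^b$; by part (a), $[H,G]^b\equiv[H^b,G]\pmod{[[H,G],G]}\subseteq[N,G]$, so $N^b[N,G]=H^{ab}[H,G][N,G]$. It then remains to compare $[N,G]=[H^a[H,G],G]$ with $[M,G]=[H^b[H,G],G]$ modulo the common piece. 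Using the commutator calculus $[UV,G]=[U,G][V,G]$ for normal $U,V$, we get $[N,G]=[H^a,G][[H,G],G]$ and, by (a), $[H^a,G]\equiv[H,G]^a\pmod{[[H,G],G]}$, so $[N,G]=[H,G]^a[[H,G],G]$, which is visibly symmetric in the roles of $a$ and $b$ only through $[H,G]^a$ versus $[H,G]^b$. The claim is that $H^{ab}[H,G][N,G]$ is in fact symmetric in $a,b$: indeed it equals $H^{ab}[H,G]^{\gcd\text{-ish}}[[H,G],G]$, and one checks the $[H,G]$-part is simply all of $[H,G]$ once one multiplies by the factor $[H,G]$ already present, so the whole expression collapses to $H^{ab}[H,G][[H,G],G]=H^{ab}[H,G]$, which is manifestly unchanged under $a\leftrightarrow b$. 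Running the same computation for the right side $M^a[M,G]$ gives the identical group, proving (b).

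I expect the main obstacle to be bookkeeping in part (b): keeping straight which commutator subgroups are absorbed into which, and justifying the repeated use of identities like $[UV,G]=[U,G][V,G]$ and $(UV)^b\equiv U^bV^b$ modulo higher commutators when $U,V$ are normal. Each such step is standard group-theoretic commutator calculus (the relevant identities are $[xy,z]=[x,z]^y[y,z]$ and $[x,yz]=[x,z][x,y]^z$, plus normality to control conjugates), but assembling them in the right order so that both sides of (b) visibly reduce to $H^{ab}[H,G]$ (or whatever the correct symmetric normal form turns out to be) requires care. A cleaner alternative I would consider is to avoid computing the common value explicitly: instead show directly that $N^b[N,G]$ contains $H^{ab}$, contains $[H,G]$, and is contained in $H^{ab}[H,G]\cdot(\text{something symmetric})$, then invoke the symmetry of the resulting description under $a\leftrightarrow b$ to conclude $N^b[N,G]=M^a[M,G]$ without ever pinning down the exact group. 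Either way, part (a) is the substantive input and part (b) is formal manipulation built on it.
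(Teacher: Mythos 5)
Your part (a) is essentially the paper's argument (the paper skips the induction by writing $[h^a,g]=h^{-a}(g^{-1}hg)^a=h^{-a}(h[h,g])^a$ and using that $[h,g]$ is central modulo $[[H,G],G]$, but the content is the same), and your remark about passing from generators to the subgroups is the right thing to say. Part (b), however, contains a genuine error. With $N=H^a[H,G]$ you assert that $[H^b,G]\subseteq[N,G]$ and then that the left-hand side ``collapses to $H^{ab}[H,G]$''. Both claims are false. For the first: $H^b$ is not contained in $N=H^a[H,G]$ in general (take $H=G=\mathbb{Z}$, $a=2$, $b=3$), so $[H^b,G]\not\subseteq[N,G]$. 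For the second: there is no full factor $[H,G]$ ``already present'' in $N^b[N,G]$ --- the factor $[H,G]$ inside $N$ contributes only $[H,G]^b$ to $N^b$ and only $[[H,G],G]$ to $[N,G]$, so the commutator part of the left-hand side is $[H,G]^a[H,G]^b[[H,G],G]$, roughly $[H,G]^{\gcd(a,b)}$, not $[H,G]$. Concretely, for $a=b=0$ your normal form predicts $[H,G]$ while the left-hand side is $[[H,G],G]$, which is a proper subgroup already for $H=G$ free of rank $2$; for $a=b=2$ and $H=G$ free of rank $2$, the left-hand side meets $[G,G]$ inside $[G,G]^2G^{(3,0)}$, again proper in $[G,G]$. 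The same objection defeats your ``cleaner alternative'', which also requires $N^b[N,G]\supseteq[H,G]$.

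The repair is not to seek a normal form independent of $a$ and $b$ separately, but an expression that is literally invariant under swapping them. This is what the paper does: modulo $[[H,G],G]$ one has $(H^a[H,G])^b\equiv H^{ab}[H,G]^b\equiv H^{ab}[H^b,G]$, and $[H^a[H,G],G]\equiv[H^a,G]$ (from $[h^ak,g]=k^{-1}[h^a,g]k\,[k,g]$ together with normality of $[H^a,G]$), so the left-hand side of (b) equals $H^{ab}[H^b,G][H^a,G][[H,G],G]$. Interchanging $a$ and $b$ merely permutes the two commutator factors, so this expression is symmetric and equals the right-hand side; no collapse of $[H,G]^a[H,G]^b$ to $[H,G]$ is needed, and none is true.
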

\begin{proof}
(a) \quad
For $h\in H$ and $g\in G$ we have
\[
[h^a,g]=h^{-a}(g\inv hg)^a=h^{-a}(h[h,g])^a\equiv  [h,g]^a\pmod{[[H,G],G]}.
\]
(b) \quad
Since both subgroups contain the normal subgroup $[[H,G],G]$, we prove the equality modulo $[[H,G],G]$.
By (a),
\[
(H^a[H,G])^b\equiv H^{ab}[H,G]^b\equiv H^{ab}[H^b,G] \pmod{[[H,G],G]}.
\]
Also, for $h\in H$ and $k\in [H,G]$ we have
\[
[h^ak,g]=k\inv[h^a,g]k[k,g]\in k\inv[H^a,G]k[[H,G],G]=[H^a,G][[H,G],G],
\]
since $[H^a,G]$ is normal in $G$.
Hence
\[
[H^a[H,G],G]\equiv[H^a,G]\pmod{[[H,G],G]}.
\]
Therefore the left hand side of (b) is  $H^{ab}[H^b,G][H^a,G][[H,G],G]$.
By symmetry, this is also  the right hand side of (b).
\end{proof}

\begin{lem}
\label{ABseq}
Let  $\sig\in S_{n-1}$ be a permutation, and $A_\sig$ the sequence obtained from $A$ by applying $\sig$ to the first $n-1$ elements.
Then $G^{(n,A)}=G^{(n,A_\sig)}$.
\end{lem}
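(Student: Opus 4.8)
The plan is to reduce the claim to the case of a transposition of two \emph{adjacent} indices, since adjacent transpositions generate $S_{n-1}$, and because the definition of $G^{(n,A)}$ only involves $a_1\nek a_{n-1}$, an adjacent transposition at any position other than the last can be absorbed into the inductive step. So suppose first that $\sig$ transposes $n-2$ and $n-1$, fixing $1\nek n-3$. Here $G^{(n-1,A)}=G^{(n-1,A_\sig)}$ by the inductive hypothesis applied in degree $n-1$, and we must compare $G^{(n,A)}=(G^{(n-1,A)})^{a_{n-1}}[G^{(n-1,A)},G]$ with $G^{(n,A_\sig)}=(G^{(n-1,A)})^{a_{n-2}}[G^{(n-1,A)},G]$. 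But $G^{(n-1,A)}=(G^{(n-2,A)})^{a_{n-2}}[G^{(n-2,A)},G]$, so with $H:=G^{(n-2,A)}$, $a:=a_{n-2}$, $b:=a_{n-1}$ we are comparing exactly
\[
(H^a[H,G])^b[H^a[H,G],G]
\quad\text{and}\quad
(H^b[H,G])^a[H^b[H,G],G],
\]
which are equal by \Lref{comm}(b). (Note $H$ is normal in $G$, being a term of an $A$-type filtration.)

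Next, for an adjacent transposition $\sig$ of $k$ and $k+1$ with $k+1<n-1$, I would induct on $n$: by the inductive hypothesis in degree $n-1$ we get $G^{(n-1,A)}=G^{(n-1,A_\sig)}$, and since $a_{n-1}$ is unchanged by $\sig$, the recursive formula $G^{(n,A)}=(G^{(n-1,A)})^{a_{n-1}}[G^{(n-1,A)},G]$ immediately gives $G^{(n,A)}=G^{(n,A_\sig)}$. The base of this induction on $n$ is small: for $n$ such that $n-1\le 2$ the only nontrivial permutation is the transposition of $1$ and $2$ (when $n=3$), which is the adjacent-transposition-at-the-last-index case handled by \Lref{comm}(b) as above (with $H=G^{(1,A)}=G$).

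Finally, an arbitrary $\sig\in S_{n-1}$ is a product of adjacent transpositions $\sig=\tau_1\cdots\tau_m$; applying the two cases above repeatedly, and using that $(A_{\tau_1\cdots\tau_{j-1}})_{\tau_j}=A_{\tau_1\cdots\tau_j}$, we chain the equalities $G^{(n,A)}=G^{(n,A_{\tau_1})}=\cdots=G^{(n,A_\sig)}$. The one point requiring a little care is bookkeeping: one must phrase the induction so that "the inductive hypothesis in degree $n-1$" is available for \emph{all} permutations in $S_{n-2}$ simultaneously, i.e.\ run a single induction on $n$ with statement "for every $A$ and every $\sig\in S_{n-1}$, $G^{(n,A)}=G^{(n,A_\sig)}$", and reduce the inductive step to the two adjacent-transposition cases. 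The only genuine content is \Lref{comm}(b); everything else is the reduction to adjacent transpositions and the observation that $a_{n-1}$ is the sole new datum in passing from degree $n-1$ to degree $n$. I expect no real obstacle beyond stating the induction cleanly.
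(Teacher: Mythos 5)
Your proposal is correct and follows essentially the same route as the paper: reduce to an adjacent transposition and apply \Lref{comm}(b) with $H$ the preceding term of the filtration (the paper does this once at the level where the transposition acts and then observes the higher terms agree, rather than running your explicit induction on $n$, but the content is identical). One slip worth noting: in your first case the assertion that $G^{(n-1,A)}=G^{(n-1,A_\sig)}$ ``by the inductive hypothesis'' is false in general, since these are $H^{a}[H,G]$ and $H^{b}[H,G]$ respectively and need not coincide (e.g.\ $a=0$, $b=1$); consequently the displayed identity $G^{(n,A_\sig)}=(G^{(n-1,A)})^{a_{n-2}}[G^{(n-1,A)},G]$ is misstated, but this does no harm because the comparison you actually carry out in terms of $H$, $a$, $b$ is the correct one and is exactly the computation in the paper.
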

\begin{proof}
Every permutation is a composition of transpositions, so we can assume that $\sig$ transposes $k-1$ and $k$, where $2\leq k<n$.
We apply Lemma \ref{comm}(b) with $H=G^{(k-1,A)}=G^{(k-1,A_\sig)}$,
$a=a_{k-1}$,  and $b=a_k$,  to obtain that
\[
\begin{split}
G^{(k+1,A)}&=(G^{(k,A)})^b[G^{(k,A)},G]=(H^a[H,G])^b[H^a[H,G],G] \\
&=(H^b[H,G])^a[H^b[H,G],G]=(G^{(k,A_\sig)})^a[G^{(k,A_\sig)},G]=G^{(k+1,A_\sig)}.
\end{split}
\]
Hence also $G^{(n,A)}=G^{(n,A_\sig)}$.
\end{proof}

We now obtain the main result of this section:

\begin{thm}
\label{closed formula for A filtration}
Let $A=(a_i)_{i=1}^\infty$ be a sequence of non-negative integers and $G$ a group.
Let $e(n,i)$ be as in Example \ref{e for A}.
Then for every $n\geq1$ we have
\[
G^{(n,A)}=\prod_{i=1}^n(G^{(i,0)})^{e(n,i)}.
\]
\end{thm}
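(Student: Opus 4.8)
The plan is to prove the two inclusions separately, using induction on $n$. For $n=1$ both sides equal $G$, so assume $n \geq 2$ and that the statement holds for all smaller values. The base observations are that $e(n,i)$ for the sequence $A$ is a \emph{binomial} multiplicatively descending map (\Example{e for A binom}), and that conditions (1), (2) of \S\ref{section on inductive definitions} hold in the present setup (\Lref{(1) (2) for A-filtration}), so $\Tref{rec sequence inclusion}$ applies.

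For the inclusion $G^{(n,A)} \subseteq \prod_{i=1}^n (G^{(i,0)})^{e(n,i)}$, I would first reduce to the free group: both sides are the images under a surjection $F = F_X \to G$ of the corresponding subgroups of $F$ (taking $X$ a generating set, or even $X = G$), and both constructions are compatible with surjections, so it suffices to prove $F^{(n,A)} \subseteq \prod_{i=1}^n (F^{(i,0)})^{e(n,i)}$. Now by $\Tref{rec sequence inclusion}$ (with $f(n) = a_{n-1}$, $g(n) = n-1$, $T = \dbZ_{\ge 1}\times\{1\}$) we have $F^{(n,A)} = F_{(n)} \subseteq \mu_\dbZ\inv(1+\grd_\dbZ^{(e,n)})$, and by $\Tref{first main theorem}$ (valid since $e$ is binomial and $R = \dbZ$) the latter group equals $\prod_{i=1}^n (F^{(i,0)})^{e(n,i)}$. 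This gives one inclusion.

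For the reverse inclusion $\prod_{i=1}^n (G^{(i,0)})^{e(n,i)} \subseteq G^{(n,A)}$, I would show that each generating factor $(G^{(i,0)})^{e(n,i)}$ lies in $G^{(n,A)}$, for $1 \leq i \leq n$. The case $i = n$ is trivial since $e(n,n) = 1$ and $G^{(n,0)} \subseteq G^{(n,A)}$ (an easy induction: $G^{(n,A)}$ always contains $[G^{(n-1,A)},G] \supseteq \cdots$). For $1 \leq i < n$, I would fix a subset $J \subseteq \{1,\dots,n-1\}$ with $|J| = n-i$; by $\Lref{ABseq}$ we may reorder the first $n-1$ entries of $A$, so without loss of generality $J = \{i, i+1, \dots, n-1\}$, i.e. the last $n-i$ of them. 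The key claim is then $(G^{(i,0)})^{a_i a_{i+1}\cdots a_{n-1}} \subseteq G^{(n,A)}$. This should follow from iterating a statement like: $(G^{(k,0)})^{m} \subseteq G^{(k+1,A)}$ whenever $a_k \mid m$, together with the more refined $(G^{(k,0)})^{a_k m} \cdot (\text{commutator terms}) \subseteq G^{(k+1,A)}$, bootstrapping the power up one central degree at a time. Since $e(n,i)$ is the gcd of the products $\prod_{j \in J} a_j$ over all such $J$, and $G^{(n,A)}$ is a subgroup, containing all these power-subgroups forces it to contain $(G^{(i,0)})^{e(n,i)}$.

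The main obstacle I anticipate is the reverse inclusion, specifically making the "bootstrap the exponent through the central series" argument precise. The naive statement $(G^{(k,0)})^m \subseteq G^{(k+1,A)}$ when $a_k \mid m$ is false without care, because pushing powers past commutators introduces lower-order correction terms (as in $\Lref{comm}$(a): $[H^a,G] \equiv [H,G]^a$ only modulo $[[H,G],G]$). The right way is probably to prove by downward induction on $k$ (from $k = n-1$ to $k = i$) that $\prod_{r \geq k}(G^{(r,0)})^{a_i\cdots a_{r-1}\,e'} \subseteq G^{(k,A)}$ for suitable cumulative products $e'$, using $\Lref{comm}$ at each step to absorb the correction terms into the next higher factor, and invoking the already-established inclusion $G^{(k,A)} \subseteq \prod_{i=1}^k(G^{(i,0)})^{e(k,i)}$ to control what the corrections look like. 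Alternatively — and this may be cleaner — one can run the whole reverse inclusion inside the free group using $\Tref{first main theorem}$: show $\prod_{i=1}^n(F^{(i,0)})^{e(n,i)} = \mu_\dbZ\inv(1+\grd_\dbZ^{(e,n)}) \subseteq F^{(n,A)}$ by verifying directly that $\mu_\dbZ$ kills the quotient $\mu_\dbZ\inv(1+\grd_\dbZ^{(e,n)})/F^{(n,A)}$, using the induction hypothesis $F^{(n-1,A)} = \prod_{i=1}^{n-1}(F^{(i,0)})^{e(n-1,i)}$ and a dimension/torsion-freeness argument parallel to the proof of $\Tref{first main theorem}$ itself.
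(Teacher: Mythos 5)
Your first inclusion ($G^{(n,A)}\subseteq\prod_i(G^{(i,0)})^{e(n,i)}$) is exactly the paper's argument: reduce to the free group, apply \Tref{rec sequence inclusion} via \Lref{(1) (2) for A-filtration}, and conclude with \Tref{first main theorem} using binomiality (Example \ref{e for A binom}). No issues there.

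For the reverse inclusion your skeleton (reduce to a single subset $J$ via \Lref{ABseq}, then use that $e(n,i)$ is an integral linear combination of the products $\prod_{j\in J}a_j$) is also the paper's, but you leave the key claim $(G^{(i,0)})^{a_{\sigma(i)}\cdots a_{\sigma(n-1)}}\subseteq G^{(n,A_\sigma)}$ unproved and misdiagnose it as requiring a delicate bootstrap with commutator correction terms. It does not: no powers ever need to be pushed past commutators. One first checks by a trivial induction that $G^{(k,0)}\subseteq G^{(k,A_\sigma)}$ for all $k$ (since $G^{(k+1,0)}=[G^{(k,0)},G]\subseteq[G^{(k,A_\sigma)},G]\subseteq G^{(k+1,A_\sigma)}$), and then uses only the elementary containment $H^{ab}\subseteq(H^a)^b$ for subgroups generated by powers (because $h^{ab}=(h^a)^b$) together with the defining relation $(G^{(k,A_\sigma)})^{a_{\sigma(k)}}\subseteq G^{(k+1,A_\sigma)}$, iterated from $k=i$ up to $k=n-1$. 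Since one only needs a \emph{containment} of the subgroup generated by the full-product powers, and the target $G^{(n,A_\sigma)}$ already contains all the relevant commutator factors by definition, \Lref{comm} plays no role here; it is needed only inside the proof of \Lref{ABseq}, which you correctly invoke as a black box. Your two proposed workarounds are also problematic: the ``naive statement'' $(G^{(k,0)})^m\subseteq G^{(k+1,A)}$ for $a_k\mid m$ is not what is needed (and your downward induction scheme is far more machinery than required), while the free-group/torsion-freeness alternative is essentially circular --- adapting the maximality argument of \Tref{first main theorem} to land in $F^{(n,A)}$ requires knowing $(F^{(m,0)})^{e(n,m)}\subseteq F^{(n,A)}$, which is precisely the claim at issue. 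Note also that this direction is proved directly for an arbitrary group $G$; no reduction to the free group is needed for it.
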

\begin{proof}
Let $T$ be a subset of $\{1,2\nek n-1\}$ of size $n-i$.
There exists $\sigma\in S_{n-1}$ such that $T=\{\sigma(i),\cdots,\sigma(n-1)\}$.
Let $A_\sigma$ be again the sequence obtained from $A$ by applying $\sig$ to the first $n-1$ elements.
By the definition of the filtrations, for every $1\leq i\leq n$ we have $G^{(i,0)}\subseteq G^{(i,A_\sigma)}$.
Therefore $(G^{(i,0)})^{\prod_{j\in J}a_j}\subseteq G^{(n,A_\sigma)}$, and by Lemma \ref{ABseq}, $G^{(n,A_\sigma)}=G^{(n,A)}$.
Further, $e(n,i)$ is a linear combination of the products $\prod_{j\in T}a_j$ with integral coefficients.
We conclude that $(G^{(i,0)})^{e(n,i)}\subseteq G^{(n,A)}$, whence  $\prod_{i=1}^n(G^{(i,0)})^{e(n,i)}\subseteq G^{(n,A)}$.

For the opposite inclusion let $F=F_X$ be a free group on $X$.
By Lemma \ref{(1) (2) for A-filtration} and  Theorem \ref{rec sequence inclusion},
$F^{(n,A)}\subseteq\mu_\dbZ\inv(1+\grd_\dbZ^{(e,n)})$.
Since $e$ is binomial (Example \ref{e for A binom}), Theorem \ref{first main theorem} shows that $\mu_\dbZ\inv(1+\grd_\dbZ^{(e,n)})=\prod_{i=1}^n(F^{(i,0)})^{e(n,i)}$, so $F^{(n,A)}\subseteq\prod_{i=1}^n(F^{(i,0)})^{e(n,i)}$.
Since every group is an epimorphic image of a free group, this completes the proof.
\end{proof}

In particular, for a non-negative integer $a$, we define the \textsl{$a$-lower central series} $G^{(n,a)}$, $n=1,2\nek$ of $G$ by
\[
G^{(1,a)}=G, \quad G^{(n,a)}=(G^{(n-1,a)})^a[G^{(n-1,a)},G]
\]
for $n\geq2$.
Note that when $a=0$ (resp., $a=p$ is prime) this is just the lower central (resp., $p$-central) series, and therefore there is no conflict of notation.
We obtain:

\begin{cor}
For every integer $n\geq1$ we have
$G^{(n,a)}=\prod_{i=1}^n(G^{(i,0)})^{a^{n-i}}$.
\end{cor}

When $a=q$ is a prime power, this is contained in \cite{NeukirchSchmidtWingberg}*{Prop.\ 3.8.6}.

\section{The $q$-Zassenhaus filtration}
\label{section on Zassenhaus filtration}
In this section, let $p$ be a prime number, $t$  a positive integer, and $q=p^t$ a $p$-power.
Let $e(n,i)$ be the multiplicatively descending map of Example \ref{log pni}.
Thus for integers $1\leq i\leq n$ we set $j(n,i)=\left\lceil{\log_p\left(\frac ni\right)}\right\rceil$ and $e(n,i)=q^{j(n,i)}$.
We give an inductive construction of the subgroups $\prod_{i=1}^n(G^{(i,0)})^{e(n,i)}$ of a group $G$ in this case.
More specifically, we show that they coincide with the \textsl{$q$-Zassenhaus filtration} $G_{(n,q)}$, $n=1,2\nek$ defined inductively by
\[
G_{(1,q)}=G, \quad G_{(n,q)}=G_{(\lceil n/p\rceil,q)}^q\prod_{s+t=n}[G_{(s,q)},G_{(t,q)}].
\]

\begin{lem}
\label{Zassenhaus is decreasing}
$G_{(n-1,q)}\geq G_{(n,q)}$ for every $n\geq2$.
\end{lem}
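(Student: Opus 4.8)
The plan is to prove the inequality $G_{(n-1,q)}\geq G_{(n,q)}$ by induction on $n$, exploiting the recursive definition of the $q$-Zassenhaus filtration together with standard commutator identities. The base case $n=2$ is immediate: $G_{(2,q)}=G_{(1,q)}^q[G_{(1,q)},G_{(1,q)}]=G^q[G,G]\leq G=G_{(1,q)}$. For the inductive step with $n\geq3$, I would assume $G_{(m-1,q)}\geq G_{(m,q)}$ for all $m<n$ and show $G_{(n-1,q)}\geq G_{(n,q)}$.

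First I would handle the ``power part'' $G_{(\lceil n/p\rceil,q)}^q$. Since $\lceil n/p\rceil\leq\lceil(n-1)/p\rceil+1$, and in fact for $n\geq2$ one checks $\lceil n/p\rceil$ is either $\lceil(n-1)/p\rceil$ or $\lceil(n-1)/p\rceil+1$; using the induction hypothesis (applied possibly iteratively, since the filtration is descending below level $n$) we get $G_{(\lceil n/p\rceil,q)}\leq G_{(\lceil(n-1)/p\rceil,q)}$, hence $G_{(\lceil n/p\rceil,q)}^q\leq G_{(\lceil(n-1)/p\rceil,q)}^q\leq G_{(n-1,q)}$, the last inclusion being a summand of the recursive definition of $G_{(n-1,q)}$.

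Next I would handle each commutator term $[G_{(s,q)},G_{(t,q)}]$ with $s+t=n$, $s,t\geq1$. Say $s\leq t$; then $t\geq\lceil n/2\rceil\geq 2$ for $n\geq3$, so $t-1\geq 1$ and $s+(t-1)=n-1$. By the induction hypothesis $G_{(t,q)}\leq G_{(t-1,q)}$, so $[G_{(s,q)},G_{(t,q)}]\leq[G_{(s,q)},G_{(t-1,q)}]$, which is one of the commutator summands appearing in the recursive definition of $G_{(n-1,q)}$ (note $s+(t-1)=n-1$ and both indices are $\geq1$). If instead $s=t$ (so $n=2t$ is even), the same argument with roles as above still works since $t\geq 2$. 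Collecting the power part and all commutator parts, every generator of $G_{(n,q)}$ lies in $G_{(n-1,q)}$, completing the induction.

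The only mildly delicate point is the bookkeeping on the ceiling function: one must verify $G_{(\lceil n/p\rceil,q)}\leq G_{(\lceil(n-1)/p\rceil,q)}$, which follows from $\lceil(n-1)/p\rceil\leq\lceil n/p\rceil\leq\lceil(n-1)/p\rceil+1$ combined with the (already established, lower) instances of the induction hypothesis guaranteeing $G_{(k+1,q)}\leq G_{(k,q)}$ for $k<n-1$. I do not expect any genuine obstacle here; the argument is a routine induction once the recursive structure is unwound, and the key observation is simply that decreasing the larger of the two commutator indices by one keeps the sum equal to $n-1$ while staying within the generating set of $G_{(n-1,q)}$.
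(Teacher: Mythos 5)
Your proof is correct and follows essentially the same route as the paper's: induct on $n$ so that $G_{(1,q)}\geq\cdots\geq G_{(n-1,q)}$ is known, then show each factor of $G_{(n,q)}$ sits inside the corresponding factor of $G_{(n-1,q)}$ — the power part via $\lceil(n-1)/p\rceil\leq\lceil n/p\rceil\leq n-1$, and each commutator $[G_{(s,q)},G_{(t,q)}]$ by lowering one index that is at least $2$. The only cosmetic difference is that you single out the base case $n=2$ and always decrement the larger commutator index, whereas the paper absorbs $n=2$ into the general argument; the substance is identical.
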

\begin{proof}
We may assume inductively that $G=G_{(1,q)}\geq\cdots\geq G_{(n-1,q)}$.
We show that all the factors in the definition of $G_{(n,q)}$ are contained in $G_{(n-1,q)}$.

We first note that $\lceil (n-1)/p\rceil\leq\lceil n/p\rceil\leq n-1$.
By the induction hypothesis, $G_{(\lceil (n-1)/p\rceil,q)}\geq G_{(\lceil n/p\rceil,q)}$, whence
$G_{(n-1,q)}\geq G_{(\lceil (n-1)/p\rceil,q)}^q\geq G_{(\lceil n/p\rceil,q)}^q$.

Next, consider $s,t\geq1$ with $s+t=n$.
If, say $s\geq2$, then
\[
G_{(n-1,q)}\geq[G_{(s-1,q)},G_{(t,q)}]\geq[G_{(s,q)},G_{(t,q)}],
\]
and similarly when $t\geq2$.
Finally, when $s=t=1$ and $n=2$ the latter inclusion is trivial.
\end{proof}

We consider the general framework of \S\ref{section on inductive definitions} with
\[
f(n)=q,  \quad g(n)=\lceil n/p\rceil,  \quad T=\dbZ_{\geq1}^2.
\]

\begin{lem}
\label{1, 2 for q-Zassenhaus}
Conditions (1) and (2) of \S\ref{section on inductive definitions} hold in this setup.
\end{lem}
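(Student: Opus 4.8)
The plan is to verify conditions (1) and (2) of \S\ref{section on inductive definitions} directly from the formula $e(n,i)=q^{j(n,i)}$ with $j(n,i)=\lceil\log_p(n/i)\rceil$, by reducing every divisibility statement about powers of $q$ to an inequality about the exponents $j(n,i)$, which in turn reduces to a statement about the ceiling of $\log_p$.

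For condition (1), I would take $(s,t)\in T=\dbZ_{\geq1}^2$, $1\leq i\leq s$, $1\leq j\leq t$, and I need $e(s,i)e(t,j)=q^{j(s,i)+j(t,j)}\in e(s+t,i+j)\dbZ=q^{j(s+t,i+j)}\dbZ$. Since $q=p^t$ is a prime power, this holds iff $j(s,i)+j(t,j)\geq j(s+t,i+j)$. Writing $j(s,i)=\lceil\log_p(s/i)\rceil$ etc., it suffices to show $\lceil\log_p(s/i)\rceil+\lceil\log_p(t/j)\rceil\geq\lceil\log_p((s+t)/(i+j))\rceil$. Using $\lceil a\rceil+\lceil b\rceil\geq\lceil a+b\rceil$, it is enough to check that $\log_p(s/i)+\log_p(t/j)\geq\log_p((s+t)/(i+j))$, i.e.\ $\frac{s}{i}\cdot\frac{t}{j}\geq\frac{s+t}{i+j}$. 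This last inequality is elementary: clearing denominators, $st(i+j)\geq ij(s+t)$, i.e.\ $sti+stj\geq ijs+ijt$, i.e.\ $si(t-j)+tj(s-i)\geq 0$, which holds because $t\geq j$ and $s\geq i$. (One can also invoke the mediant/super-multiplicativity of the map.)

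For condition (2), I would take $n\geq2$ and $1\leq j_1,\ldots,j_l\leq g(n)=\lceil n/p\rceil$ with $1\leq l\leq f(n)=q$ and $j_1+\cdots+j_l\leq n$, and I need
\[
\binom{q}{l}e(\lceil n/p\rceil,j_1)\cdots e(\lceil n/p\rceil,j_l)\in e(n,j_1+\cdots+j_l)\dbZ.
\]
Set $m=\lceil n/p\rceil$ and $\sigma=j_1+\cdots+j_l$. I expect to handle the two factors separately: first, $e(m,j_1)\cdots e(m,j_l)=q^{j(m,j_1)+\cdots+j(m,j_l)}$, and an iterated application of the super-multiplicativity argument from condition (1) (with a telescoping sum, using $j_1+j_2\leq\sigma$, etc.) should give $j(m,j_1)+\cdots+j(m,j_l)\geq j(lm,\sigma)$; since $lm=l\lceil n/p\rceil\geq l\cdot(n/p)\geq\ldots$ I then want to compare $j(lm,\sigma)$ with $j(n,\sigma)$. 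The point is that $l\leq q=p^t$, so $lm\leq p^t\lceil n/p\rceil\leq p^t\cdot\frac{n+p-1}{p}\cdot$— here I need to be a bit careful, since $\lceil n/p\rceil$ can exceed $n/p$. The cleaner route is to keep the $\binom{q}{l}$ factor in play: $v_p\bigl(\binom{q}{l}\bigr)=v_p\bigl(\binom{p^t}{l}\bigr)=t-v_p(l)$ by Kummer's theorem (the exponent of $p$ in $\binom{p^t}{l}$ is $t$ minus the number of carries, equivalently $t-v_p(l)$ for $1\le l\le p^t$). So the left side has $p$-adic valuation $t-v_p(l)+t\sum_r j(m,j_r)$, and I must show this is $\geq t\,j(n,\sigma)$. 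Writing $l=p^a l'$ with $p\nmid l'$, one has $l'm\leq lm\leq q\,m=p^t\lceil n/p\rceil$; and since $l'$ divides the product sloppily I instead bound $j(m,j_r)$ from below to absorb the deficit: each $j(m,j_r)=\lceil\log_p(m/j_r)\rceil\ge\lceil\log_p(n/(pj_r))\rceil\geq j(n,\sigma)-1-\lceil\log_p(\sigma/j_r)\rceil$, and summing with the carry term $t-v_p(l)$ should close the gap. I anticipate this bookkeeping — reconciling the ceiling in $\lceil n/p\rceil$ with the power $q=p^t$ and the binomial valuation $t-v_p(l)$ — to be the main obstacle; it may be cleanest to split into the case $l=1$ (where condition (2) reduces to $j(m,j_1)\geq j(n,j_1)$, immediate from $m=\lceil n/p\rceil\leq n$ and monotonicity) and the case $l\geq 2$, and within the latter to use $\sigma\geq 2j_1$ type inequalities to trade a $\log_p$ for the available factor of $q$, exactly as in Lemma~\ref{condition  (iii)} and Example~\ref{log pni binom}.

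Alternatively, and perhaps more transparently, I would observe that condition (2) for this $e$ can be deduced from the binomiality of $e$ (Example~\ref{log pni binom}) together with condition~(1): indeed, $e(m,j_1)\cdots e(m,j_l)\in e(lm,\sigma)\dbZ\subseteq e(pm,\sigma)\dbZ\subseteq e(n,\sigma)\dbZ$ once $l\le q$ forces $lm$ into the relevant range, but since $q=p^t$ rather than $p$ this needs the binomial factor; so the statement $\binom{q}{l}\prod_r e(m,j_r)\in e(n,\sigma)\dbZ$ is really the binomiality condition $\binom{e(pm,m)}{l}\in e(pm,ml)\dbZ$ transported along condition~(1). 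In the write-up I would present whichever of these two routes is shortest, almost certainly reducing everything to the single inequality $t-v_p(l)+t\sum_r\lceil\log_p(m/j_r)\rceil\ge t\lceil\log_p(n/\sigma)\rceil$ and dispatching it by the super-additivity of $\lceil\,\cdot\,\rceil$ and of $\log_p$ under the mediant, exactly as above.
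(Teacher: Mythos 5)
Your treatment of condition (1) is exactly the paper's: reduce $e(s,i)e(t,j)\in e(s+t,i+j)\dbZ$ to $\lceil\log_p(s/i)\rceil+\lceil\log_p(t/j)\rceil\geq\lceil\log_p((s+t)/(i+j))\rceil$ via superadditivity of the ceiling and the mediant inequality $st/ij\geq(s+t)/(i+j)$. No issues there.

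For condition (2) there is a genuine gap: you correctly identify the target inequality and the relevant ingredients ($v_p\bigl(\binom{q}{l}\bigr)=t-v_p(l)$, iterated use of (1), the comparison of $\lceil n/p\rceil$ with $n/p$), but none of the concrete routes you sketch actually closes it. The per-term bound $j(m,j_r)\geq j(n,\sigma)-1-\lceil\log_p(\sigma/j_r)\rceil$ loses too much when summed over $r$ (you subtract $\sum_r\lceil\log_p(\sigma/j_r)\rceil$, which you have no way to recover), and the case split you propose, $l=1$ versus $l\geq2$, is not where the argument bifurcates: for $2\leq l<p$ one still has $l\,g(n)<n$ in general (e.g.\ $p=5$, $n=10$, $l=2$), so iterating (1) alone does not reach $e(n,\sigma)$, while for $l=q$ the binomial coefficient contributes nothing. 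The "alternative route" via binomiality of $e$ is also not a valid reduction as stated, since $e(lm,\sigma)\dbZ\subseteq e(pm,\sigma)\dbZ$ requires $l\geq p$. The paper splits instead on $l\geq p$ versus $l<p$: if $p\leq l$ then $l\,g(n)\geq n$, so iterating (1) gives $\prod_r e(g(n),j_r)\in e(l\,g(n),\sigma)\dbZ\subseteq e(n,\sigma)\dbZ$ with no help from the binomial coefficient; if $l<p$ then $\gcd(l,q)=1$ and $q\binom{q-1}{l-1}=l\binom{q}{l}$ forces $q\mid\binom{q}{l}$, and the single extra factor of $q$ absorbs the one-power-of-$p$ deficit via $\lceil\log_p(n/\sigma)\rceil\leq1+\lceil\log_p(g(n)/j_1)\rceil\leq1+\sum_r\lceil\log_p(g(n)/j_r)\rceil$. (Your final displayed inequality $t-v_p(l)+t\sum_r j(m,j_r)\geq t\,j(n,\sigma)$ is in fact true and would suffice — one can prove it uniformly from $\sum_r j(m,j_r)\geq j(lm,\sigma)\geq j(n,\sigma)+\lfloor\log_p l\rfloor-1\geq j(n,\sigma)+v_p(l)-1$ — but you neither prove it nor supply a case analysis that would; as written the bookkeeping you flag as "the main obstacle" remains open.)
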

\begin{proof}
For (1), take integers $1\leq i\leq s$ and $1\leq j\leq t$.
As $st/ij\geq (s+t)/(i+j)$ we have
\[
\Bigl\lceil\log_p\frac si\Bigr\rceil+\Bigl\lceil\log_p\frac tj\Bigr\rceil
\geq \Bigl\lceil\log_p\frac {st}{ij}\Bigr\rceil
\geq \Bigl\lceil\log_p\frac {s+t}{i+j}\Bigr\rceil,
\]
whence $e(s,i)e(t,j)\in e(s+t,i+j)\dbZ$.

For (2), let $1\leq j_1\nek j_l\leq g(n)$ where $1\leq l\leq q$ and $j_1+\cdots+j_l\leq n$.

When $p\leq l$ we have $lg(n)\geq n$, so (1) implies that
\[
e(g(n),j_1)\cdots e(g(n),j_l)\in e(lg(n),j_1+\cdots+j_l)\dbZ\subseteq e(n,j_1+\cdots+j_l)\dbZ.
\]

When $1\leq l<p$ the equality $q\binom{q-1}{l-1}=l\binom ql$ shows that $q$ divides $q\choose l$.
Further,  $n/(j_1+\cdots+j_l)\leq n/j_1\leq pg(n)/j_1$, so
\[
\Bigl\lceil\log_p\frac n{j_1+\cdots+j_l}\Bigr\rceil
\leq 1+\Bigl\lceil\log_p\frac {g(n)}{j_1}\Bigr\rceil
\leq1+\Bigl\lceil\log_p\frac {g(n)}{j_1}\Bigr\rceil+\cdots+\Bigl\lceil\log_p\frac {g(n)}{j_l}\Bigr\rceil.
\]
Therefore $qe(g(n),j_1)\cdots e(g(n),j_l)\in e(n,j_1+\cdots+j_l)\dbZ$, and we get (2) in this case as well.
\end{proof}

We now prove the main result of this section.
It extends Lazard's result (\cite{Lazard65}*{p.\ 209, (3.14.5)},  \cite{DixonDuSautoyMannSegal99}*{Th.\ 11.2}) mentioned in the Introduction, which is the case $t=1$, $q=p$ of our theorem.
Note that our method of proof is different from Lazard's.

\begin{thm}
\label{q Zassenhaus as product}
Let $G$ be a group.
Then $G_{(n,q)}=\prod_{i=1}^n(G^{(i,0)})^{e(n,i)}$.
\end{thm}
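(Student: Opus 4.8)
The strategy mirrors the proof of Theorem \ref{closed formula for A filtration}, so I would prove the two inclusions separately, reducing the ``hard'' direction to a free group via the Magnus-algebra machinery of \S\ref{section on inductive definitions}.

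First I would establish the easy inclusion $\prod_{i=1}^n(G^{(i,0)})^{e(n,i)}\subseteq G_{(n,q)}$. This amounts to checking, for each $1\leq i\leq n$, that $(G^{(i,0)})^{q^{j(n,i)}}\subseteq G_{(n,q)}$, where $j(n,i)=\lceil\log_p(n/i)\rceil$. I would argue by induction on $n$, using the recursive definition of $G_{(n,q)}$ together with the standard commutator identity $(G^{(i,0)})^q\supseteq G^{(i+1,0)}\cdot(\text{deeper terms})$ and, more to the point, the fact that raising to the $q$-th power one step of the Zassenhaus filtration lands inside the next batch: since $G_{(i,q)}\supseteq G^{(i,0)}$ by an auxiliary induction (using $[G_{(s,q)},G_{(t,q)}]\supseteq[G^{(s,0)},G^{(t,0)}]\supseteq G^{(s+t,0)}$ and $G_{(\lceil n/p\rceil,q)}^q\supseteq (G^{(\lceil n/p\rceil,0)})^q$), one gets that $(G^{(i,0)})^q\subseteq G_{(i,q)}^q\subseteq G_{(pi,q)}$ and inductively $(G^{(i,0)})^{q^r}\subseteq G_{(ip^r,q)}\subseteq G_{(n,q)}$ whenever $ip^r\geq n$, i.e.\ for $r=j(n,i)$; here I use Lemma \ref{Zassenhaus is decreasing} to pass from $G_{(ip^r,q)}$ to $G_{(n,q)}$.

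For the reverse inclusion $G_{(n,q)}\subseteq\prod_{i=1}^n(G^{(i,0)})^{e(n,i)}$ I would reduce to a free group $F=F_X$, since every group is a quotient of a free group and both sides are defined by generators-and-relations in a way compatible with epimorphisms. For the free group, by Lemma \ref{1, 2 for q-Zassenhaus} conditions (1) and (2) of \S\ref{section on inductive definitions} hold for the present choice $f(n)=q$, $g(n)=\lceil n/p\rceil$, $T=\dbZ_{\geq1}^2$, so Theorem \ref{rec sequence inclusion} gives $F_{(n,q)}\subseteq\mu_\dbZ\inv(1+\grd_\dbZ^{(e,n)})$. Since $e$ is binomial (Example \ref{log pni binom}), Theorem \ref{first main theorem} identifies $\mu_\dbZ\inv(1+\grd_\dbZ^{(e,n)})=\prod_{i=1}^n(F^{(i,0)})^{e(n,i)}$, so $F_{(n,q)}\subseteq\prod_{i=1}^n(F^{(i,0)})^{e(n,i)}$. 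Pushing forward along $F\twoheadrightarrow G$ completes the argument.

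The main obstacle is the easy inclusion, specifically pinning down the auxiliary claim $G_{(i,q)}\supseteq G^{(i,0)}$ and, more delicately, that $(G^{(i,0)})^{q^r}\subseteq G_{(ip^r,q)}$ for all $r$: this requires carefully iterating the $q$-th-power step in the recursion and invoking Lemma \ref{Zassenhaus is decreasing} at the right moments, since $\lceil ip^r/p\rceil=ip^{r-1}$ only when $p\mid ip^r$, which always holds for $r\geq1$, so the recursion $g(ip^r)=ip^{r-1}$ interacts cleanly with the exponent bookkeeping $j(n,i)=\lceil\log_p(n/i)\rceil$. Once that arithmetic is laid out the rest is formal.
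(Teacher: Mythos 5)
Your proposal is correct and follows essentially the same route as the paper: the easy inclusion via $G^{(i,0)}\subseteq G_{(i,q)}$, $(G_{(i,q)})^{q^j}\subseteq G_{(ip^j,q)}$ and Lemma \ref{Zassenhaus is decreasing}, and the reverse inclusion by reducing to a free group and applying Lemma \ref{1, 2 for q-Zassenhaus}, Theorem \ref{rec sequence inclusion}, and Theorem \ref{first main theorem} (via Example \ref{examples for first theorem - Zassenhaus}). The only difference is that you spell out the iteration of the $q$-th power step in slightly more detail than the paper does.
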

\begin{proof}
For every $1\leq i\leq n$ we have by definition $G^{(i,0)}\subseteq G_{(i,q)}$ and $(G_{(i,q)})^q\subseteq G_{(ip,q)}$.
For $j=j(n,i)$ we have $ip^j\geq n$.
Using these observations and Lemma \ref{Zassenhaus is decreasing} we obtain
\[
(G^{(i,0)})^{q^j}\subseteq(G_{(i,q)})^{q^j}\subseteq G_{(ip^j,q)}\subseteq  G_{(n,q)}.
\]
This shows that $\prod_{i=1}^n(G^{(i,0)})^{e(n,i)}\subseteq G_{(n,q)}$.

For the opposite inclusion we may assume that $G=F_X$ is a free group.
By Lemma \ref{1, 2 for q-Zassenhaus} and Theorem \ref{rec sequence inclusion}, $(F_X)_{(n,q)}\subseteq\mu_\dbZ\inv(1+\grd_\dbZ^{(e,n)})$,
and by Example \ref{examples for first theorem - Zassenhaus}, the latter group is $\prod_{i=1}^n(F^{(i,0)})^{e(n,i)}$.
\end{proof}

\section{Massey products}
\label{section on Massey products}
We now use the machinery of \S\ref{section on intersections of kernels} to investigate the cohomology of the quotients $F/\mu_R\inv(1+\grd_R^n)$ for a free group $F$.
We show that the quotients of consecutive subgroups $\mu_R\inv(1+\grd_R^n)$ are dual to a certain subgroup of $H^2(F/\mu_R\inv(1+\grd_R^n),R)$ generated by $n$-fold Massey products (Theorem \ref{duality}).
The discussion is partly inspired by \cite{Efrat14a}.

Throughout this section we fix a commutative unital ring $R$.
Let $G$ be a group acting from the left on $R$, considered as an abelian group.
We write $C^i(G,R)$ for the group of (inhomogeneous) $i$-cochains on $G$ with values in $R$ and $\partial\colon C^i(G,R)\to C^{i+1}(G,R)$ for the differential.
Let $H^i(G,R)$ be the $i$th cohomology group.
We refer, e.g., to \cite{Weibel94}*{Ch.\ 6} for the basic notions in group cohomology.

Let $F=F_X$ be as before the free group on basis $X$.
We let it act trivially on $R$.
One has $H^2(F,R)=0$ \cite{Weibel94}*{Cor.\ 6.2.7}.
Let $N$ be a normal subgroup of $F$ contained in $[F,F]$.
Then every homomorphism $F\to R$ factors via $F/N$, i.e., the inflation map $H^1(F/N,R)\to H^1(F,R)$ is surjective.
The 5-term sequence in group cohomology \cite{Weibel94}*{6.8.3} therefore shows that the transgression map
$\trg\colon H^1(N,R)^F\to H^2(F/N,R)$ (which is the $d_2^{0,1}$-differential in the Hochschild--Serre spectral sequence) is an isomorphism.
We may therefore define a bilinear map
\begin{equation}
\label{cohomological perfect pairing}
(\cdot,\cdot)'\colon \ N\times H^2(F/N,R)\to R, \quad (g,\alp)'=(\trg\inv(\alp))(g).
\end{equation}
It has a trivial right kernel (see \cite{EfratMinac11b}*{(3.3)}).

Next we recall  a few notions and facts about Massey products in group cohomology; see e.g.  \cite{Kraines66}, \cite{Dwyer75}, \cite{Fenn83} for more details.
Let $n\geq2$.
An array
\[
M=\{c_{ij}\ |\ 1\leq i<j\leq n+1,\ (i,j)\neq(1,n+1)\}
\]
in $C^1(G,R)$ is called a \textsl{defining system} if $\partial c_{ij}=-\sum_{k=i+1}^{j-1}c_{ik}\cup c_{kj}$ for every $i,j$ as above.
In particular, $\partial c_{i,i+1}=0$ for every $1\leq i\leq n$.
One can show that $\sum_{k=2}^nc_{1k}\cup c_{k,n+1}$ is a $2$-cocycle (see \cite{Fenn83}*{p.\ 233}, \cite{Kraines 66}*{p.\ 432}; note that various sources have different sign conventions).
Its cohomology class $\Val(M)$ in $H^2(G,M)$ is the \textsl{value} of the defining system $M$.
Given $\chi_1\nek\chi_n\in H^1(G,M)$, the \textsl{$n$-fold Massey product}  $\langle\chi_1\nek\chi_n\rangle$ is the subset of $H^2(G,M)$ consisting of all values of defining systems $M$ as above such that $c_{i,i+1}$ is a representative of $\chi_i$, $i=1,2\nek n$.
For example, when $n=2$ the $2$-fold Massey product $\langle\chi_1,\chi_2\rangle$ consists only of the cup product $\chi_1\cup\chi_2$.

By  Dwyer \cite{Dwyer75}*{Th.\ 2.4}, when $G$ acts trivially on $R$ there is a bijective correspondence between defining systems $M=(c_{ij})$ and group homomorphisms $\bar\varphi\colon G\to\bar\dbU_{n+1}(R)$ (see \S\ref{section on intersections of kernels}), given by $\bar\varphi(g)_{ij}=(-1)^{j-i}c_{ij}(g)$ for $1\leq i<j\leq n+1$ with $(i,j)\neq(1,n+1)$ (the other entries being obvious).

For the rest of this section we abbreviate
\[
F_{(n,R)}=\mu_R\inv(1+\grd_R^n).
\]
Given a word $w=(x_1\cdots x_n)\in X^*$ of length $n$, let $\varphi_w=\varphi_{R,w}\colon F\to\dbU_{n+1}(R)$ and $\bar\varphi_w\colon F\to\bar \dbU_{n+1}(R)$ be as in \S\ref{section on intersections of kernels}.
By Proposition \ref{kernel intersection to bar U}, $\bar\varphi_w$ factors via a homomorphism $\bar\varphi'_w\colon F/F_{(n,R)}\to \bar\dbU_{n+1}(R)$.
It gives rise as above to a defining system $M=(\bar c_{ij})$ in $C^1(F/F_{(n,R)},R)$.
We set $\psi_w=\Val(M)$.
Thus $\psi_w\in \langle[\bar c_{12}]\nek[\bar c_{n,n+1}]\rangle\subseteq H^2(F/F_{(n,R)},R)$, where $[\bar c_{ij}]$ denote the cohomology class of $c_{ij}$ in $H^1(F/F_{(n,R)},R)$.
For $1\leq i<j\leq n+1$ let  $c_{ij}\in C^1(F,R)$ be the projection of $\varphi_w$ on the $(i,j)$-entry.

\begin{lem}
\label{psi as transgression}
$\psi_w=\trg[c_{1,n+1}|_{F_{(n,R)}}]$.
\end{lem}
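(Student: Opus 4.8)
The plan is to unravel both sides and check they are given by the same cocycle on $F_{(n,R)}$, up to coboundary. On the left, $\psi_w = \Val(M)$ where $M = (\bar c_{ij})$ is the defining system in $C^1(F/F_{(n,R)},R)$ coming from $\bar\varphi'_w$; thus $\psi_w$ is the class of the $2$-cocycle $\sum_{k=2}^n \bar c_{1k} \cup \bar c_{k,n+1}$. On the right, $c_{1,n+1}$ is the $(1,n+1)$-entry of $\varphi_w$, a priori only an element of $C^1(F,R)$, not a cocycle; but by the remark following Corollary~\ref{restriction of mu hom}, its restriction to $F_{(n,R)}$ \emph{is} a homomorphism, hence an element of $H^1(F_{(n,R)},R)^F$ on which $\trg$ is defined. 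So the statement makes sense, and what must be verified is that transgressing this restriction recovers $\psi_w$.

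First I would recall the explicit description of the transgression via the Hochschild--Serre spectral sequence for $1 \to F_{(n,R)} \to F \to F/F_{(n,R)} \to 1$: given $\lambda \in \Hom(F_{(n,R)},R)^F$, one extends $\lambda$ to a cochain $\tilde\lambda \in C^1(F,R)$ and $\trg[\lambda]$ is represented by the $2$-cocycle on $F/F_{(n,R)}$ obtained by pushing down $\partial\tilde\lambda$ (which takes values in $R$ via $\lambda$, since $\partial\tilde\lambda$ vanishes on $F_{(n,R)}$-coordinates up to the appropriate identification). The natural choice is $\tilde\lambda = c_{1,n+1} \in C^1(F,R)$ itself. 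Then $\partial c_{1,n+1}$ must be computed. Since $\varphi_w\colon F\to\dbU_{n+1}(R)$ is a homomorphism, the matrix identity for multiplication in $\dbU_{n+1}(R)$ gives, for the $(1,n+1)$-entry, exactly $c_{1,n+1}(gh) = c_{1,n+1}(g) + c_{1,n+1}(h) + \sum_{k=2}^{n} c_{1k}(g)c_{k,n+1}(h)$, i.e. $\partial c_{1,n+1} = -\sum_{k=2}^{n} c_{1k}\cup c_{k,n+1}$ in $C^2(F,R)$. This is precisely (the negative of) the cocycle whose class is $\Val$ of the defining system, matching the sign convention built into Dwyer's correspondence $\bar\varphi(g)_{ij} = (-1)^{j-i}c_{ij}(g)$ used in the paper.

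The remaining step is bookkeeping: one checks that pushing $\partial c_{1,n+1}$ down along $F \to F/F_{(n,R)}$ yields $\sum_{k=2}^n \bar c_{1k}\cup \bar c_{k,n+1}$ (up to the sign that is absorbed into the definition of $\Val$), using that each $c_{1k}$ and $c_{k,n+1}$ for $2\le k\le n$, $(1,k)\ne(1,n+1)$, factors through $F/F_{(n,R)}$ as $\bar c_{1k}$, respectively $\bar c_{k,n+1}$ — this is again Proposition~\ref{kernel intersection to bar U}, which tells us $\bar\varphi_w$ kills $F_{(n,R)}$. I expect the only real subtlety to be matching the sign conventions: the paper's definition of a defining system uses $\partial c_{ij} = -\sum_{k} c_{ik}\cup c_{kj}$, Dwyer's bijection inserts $(-1)^{j-i}$, and the transgression differential $d_2^{0,1}$ carries its own sign from the spectral sequence. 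Tracking these consistently so that the equality $\psi_w = \trg[c_{1,n+1}|_{F_{(n,R)}}]$ holds on the nose (rather than up to a global sign) is where care is needed; everything else is a direct matrix computation and an application of the results already in \S\ref{section on intersections of kernels}.
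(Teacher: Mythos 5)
Your proposal is correct and follows essentially the same route as the paper: compute $\partial c_{1,n+1}$ from the matrix identity $c_{1,n+1}(g_1g_2)=\sum_{k=1}^{n+1}c_{1k}(g_1)c_{k,n+1}(g_2)$ to get $-\sum_{k=2}^{n}c_{1k}\cup c_{k,n+1}$, observe that this pushes down to the quotient by Proposition~\ref{kernel intersection to bar U}, and invoke the explicit cochain-level description of the transgression (the paper cites \cite{NeukirchSchmidtWingberg}*{Prop.\ 1.6.5}). The sign bookkeeping you flag is handled in the paper at the same level of detail (the $(-1)^{j-i}$ factors from Dwyer's correspondence are absorbed into the identification of the defining system $(\bar c_{ij})$ with the entries of $\bar\varphi'_w$), so nothing further is needed.
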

\begin{proof}
Since $\varphi_w$ is a group homomorphism, for every $g_1,g_2\in F$ we have
$c_{1,n+1}(g_1g_2)=\sum_{k=1}^{n+1}c_{1k}(g_1)c_{k,n+1}(g_2)$, whence
\[
\begin{split}
(\partial c_{1,n+1})(g_1,g_2)&=-\sum_{k=2}^nc_{1k}(g_1)c_{k,n+1}(g_2)  \\
&=-\sum_{k=2}^n\bar c_{1k}(g_1F_{(n,R)})\bar c_{k,n+1}(g_2F_{(n,R)}).
\end{split}
\]
The explicit definition of the transgression map, as in \cite{NeukirchSchmidtWingberg}*{Prop.\ 1.6.5}, therefore implies that $\psi_w=-[\sum_{k=2}^n\bar c_{1k}\cup \bar c_{k,n+1}]=\trg[c_{1,n+1}|_{F_{(n,R)}}]$.
\end{proof}

We now define a homomorphism
\[
\Psi_{(n,R)}\colon \bigoplus_{{w\in X^*}\atop{|w|=n}}R \to H^2(F/F_{(n,R)},R), \quad
(r_w)_w\mapsto\sum_wr_w\psi_w.
\]

In view of Corollary \ref{restriction of mu hom} and Proposition \ref{inclusion of intersections},  there is a bilinear map
\begin{equation}
\label{upper pairing}
(\cdot,\cdot)''\colon \ F_{(n,R)}/F_{(n+1,R)}\ \times\  \bigoplus_{{w\in X^*}\atop{|w|=n}}R \to R, \quad
(\bar g,(r_w)_w)''=\sum_{|w|=n}r_w\varphi_w(g)_{1,n+1}
\end{equation}
with a trivial left kernel.
Combining this with  (\ref{cohomological perfect pairing}) (for $N=F_{(n,R)}$), we obtain a  diagram of bilinear maps of $\dbZ$-modules
\begin{equation}
\label{commutative bilinear maps}
\xymatrix{
F_{(n,R)}/F_{(n+1,R)} &  *-<3pc>{\times} &  \bigoplus_{{w\in X^*}\atop{|w|=n}}R \ar[r]^{\quad(\cdot,\cdot)''}\ar[d]^{\Psi_{(n,R)}} & R \ar@{=}[d]\\
F_{(n,R)}\ar@{->>}[u] &   *-<3pc>{\times} & H^2(F/F_{(n,R)},R)\ar[r]^{\qquad \quad (\cdot,\cdot)'} & R.
}
\end{equation}
This diagram commutes, in the sense that for $g\in F_{(n,R)}$ and for $(r_w)_w\in \bigoplus_{{w\in X^*}\atop{|w|=n}}R$ one has,
by Lemma \ref{psi as transgression},
\[
\begin{split}
(g,\Psi_{(n,R)}((r_w)_w))'&=(g,\sum_wr_w\psi_w)'=\sum_wr_w(g,\trg[c_{1,n+1}|_{F_{(n,R)}}])'\\
&=\sum_wr_wc_{1,n+1}(g)=\sum_wr_w\varphi_w(g)_{1,n+1}
=(\bar g,(r_w)_w)''.
 \end{split}
 \]

We denote
\[
H^2(F/F_{(n,R)},R)_{n-{\rm Massey}}=\Img(\Psi_{(n,R)}).
\]

\begin{thm}
\label{duality}
For every integer $n\geq2$ there is a canonical non-degenerate bilinear map
\[
F_{(n,R)}/F_{(n+1,R)} \times H^2(F/F_{(n,R)},R)_{n-{\rm Massey}}\to R.
\]
\end{thm}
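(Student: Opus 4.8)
The plan is to extract the desired non-degenerate pairing from the commutative diagram of bilinear maps (\ref{commutative bilinear maps}) by passing to the image of $\Psi_{(n,R)}$ on the right and to the quotient $F_{(n,R)}/F_{(n+1,R)}$ on the left. Concretely, I would define the asserted bilinear map by $(\bar g, \beta) \mapsto (g,\beta)'$ for $\beta\in H^2(F/F_{(n,R)},R)_{n\text{-}\mathrm{Massey}}$ and $g\in F_{(n,R)}$, and then verify it is well-defined and non-degenerate on both sides.

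First I would check that $(\cdot,\cdot)'$ descends to $F_{(n,R)}/F_{(n+1,R)}$ in the first variable when restricted to the subgroup $H^2(F/F_{(n,R)},R)_{n\text{-}\mathrm{Massey}}$: by the commutativity of (\ref{commutative bilinear maps}), for any $\beta=\Psi_{(n,R)}((r_w)_w)$ we have $(g,\beta)'=(\bar g,(r_w)_w)''$, and the right-hand side depends only on the image $\bar g$ of $g$ in $F_{(n,R)}/F_{(n+1,R)}$ by the very definition of $(\cdot,\cdot)''$ in (\ref{upper pairing}). Hence the induced map on $F_{(n,R)}/F_{(n+1,R)} \times H^2(F/F_{(n,R)},R)_{n\text{-}\mathrm{Massey}}$ is well-defined (note that $(\cdot,\cdot)'$ is already linear, hence well-defined, in the second variable). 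Next, non-degeneracy on the left: if $\bar g$ pairs to zero with every element of the image of $\Psi_{(n,R)}$, then by commutativity $(\bar g,(r_w)_w)''=0$ for all $(r_w)_w$, and since $(\cdot,\cdot)''$ has trivial left kernel by the remark following (\ref{upper pairing}), we get $\bar g=0$. For non-degeneracy on the right: suppose $\beta\in H^2(F/F_{(n,R)},R)_{n\text{-}\mathrm{Massey}}$ satisfies $(g,\beta)'=0$ for all $g\in F_{(n,R)}$; then $\beta$ lies in the right kernel of $(\cdot,\cdot)'$, which is trivial by the statement accompanying (\ref{cohomological perfect pairing}) (with $N=F_{(n,R)}$), so $\beta=0$.

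The one genuine subtlety — and the step I expect to require the most care — is the well-definedness on the left, i.e. confirming that replacing $g$ by $gh$ with $h\in F_{(n+1,R)}$ does not change $(g,\beta)'$. This is exactly where one must use that $\beta$ is a Massey-product class and not an arbitrary element of $H^2(F/F_{(n,R)},R)$: the argument goes through the identification $(g,\Psi_{(n,R)}((r_w)_w))' = \sum_w r_w\,\varphi_w(g)_{1,n+1}$ established just before the statement, combined with Corollary \ref{restriction of mu hom} (which guarantees $g\mapsto\varphi_w(g)_{1,n+1}$ is a homomorphism on $F_{(n,R)}$) and Proposition \ref{kernel intersection to bar U} (which forces this homomorphism to vanish on $F_{(n+1,R)}$, since $F_{(n+1,R)}\subseteq F_{(n,R)}=\bigcap_{|w|=n}\Ker(\bar\varphi_w)$ and the $(1,n+1)$-coordinate on $F_{(n+1,R)}$ factors through these $\bar\varphi_w$). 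Once this is in place, the rest is a formal extraction from a commutative square of bilinear forms with prescribed trivial kernels, so I would keep that part brief. Finally I would note the pairing is canonical because every ingredient — the transgression isomorphism, the homomorphisms $\varphi_w$, and $\Psi_{(n,R)}$ — is.
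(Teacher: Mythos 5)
Your proof is correct and is essentially the paper's own argument: the paper likewise obtains the pairing formally from the commutativity of (\ref{commutative bilinear maps}) together with the trivial left kernel of the upper pairing $(\cdot,\cdot)''$ and the trivial right kernel of the lower pairing $(\cdot,\cdot)'$ (citing \cite{Efrat14a}*{Lemma 2.2} for this formal extraction). One small quibble: the vanishing of $g\mapsto\varphi_w(g)_{1,n+1}$ on $F_{(n+1,R)}$ comes from Proposition \ref{inclusion of intersections} (since $\varphi_w(g)_{1,n+1}=\mu_{R,w}(g)=0$ once $\mu_R(g)\in 1+\grd_R^{n+1}$ and $|w|=n$) rather than from Proposition \ref{kernel intersection to bar U}, but this is already built into the definition of the pairing (\ref{upper pairing}) on the quotient, so nothing is missing.
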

\begin{proof}
This follows formally from the commutativity of (\ref{commutative bilinear maps}), and the triviality of the left- (resp., right-) kernel of the upper (resp., lower) bilinear map (see \cite{Efrat14a}*{Lemma 2.2}).
\end{proof}

\begin{exam}
\rm
Take $R=\dbZ$.
We recall that, by Magnus' theorem, $F_{(n,\dbZ)}$ is the $n$-th term $F^{(n,0)}$ in the lower central sequence.
Theorem \ref{duality} therefore gives a canonical non-degenerate bilinear map
\begin{equation}
F^{(n,0)}/F^{(n+1,0)}\times H^2(F/F^{(n,0)},\dbZ)_{n-{\rm Massey}}\to\dbZ.
\end{equation}
By the classical results of Magnus and Witt \cite{SerreLie}*{Part I, Ch.\ IV, Cor.\ 6.2}, $F^{(n,0)}/F^{(n+1,0)}$ is a free $\dbZ$-module of rank $l_{|X|}(n)$;
here $l_m(n)$ is the \textsl{necklace function}, defined for a positive integer $m$ by
\[
l_m(n)=\frac1n\sum_{d|n}\mu(d)m^{n/d},
\]
where $\mu$ is the M\"obius function, and with the convention $l_\infty(n)=\infty$.
We deduce:

\begin{cor}
\label{rank of H2}
For $n\geq2$, the $\dbZ$-module $H^2(F/F^{(n,0)},\dbZ)_{n-{\rm Massey}}$ is free of rank $l_{|X|}(n)$.
\end{cor}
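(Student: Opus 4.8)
The corollary follows by combining the two facts assembled just above it with a small dose of module theory over the PID $\dbZ$. By Theorem \ref{duality} applied with $R=\dbZ$ --- so that $F_{(n,\dbZ)}=F^{(n,0)}$ by (\ref{MagnusWitt}) --- there is a non-degenerate bilinear map
\[
M\ \times\ N\longrightarrow\dbZ,\qquad M:=F^{(n,0)}/F^{(n+1,0)},\quad N:=H^2(F/F^{(n,0)},\dbZ)_{n-{\rm Massey}},
\]
and by the theorem of Magnus and Witt \cite{SerreLie}*{Part I, Ch.\ IV, Cor.\ 6.2} the $\dbZ$-module $M$ is free of finite rank $r:=l_{|X|}(n)$ (finite since $X$ is finite). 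So the plan is to upgrade \emph{non-degenerate pairing against a finitely generated free module} to \emph{free of the same rank}.

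Recall from the proof of Theorem \ref{duality} (and \cite{Efrat14a}*{Lemma 2.2}) that non-degeneracy of the pairing means precisely that the two adjoint homomorphisms $M\to\Hom_\dbZ(N,\dbZ)$ and $N\to\Hom_\dbZ(M,\dbZ)$ are injective. Since $M\isom\dbZ^r$, the target $\Hom_\dbZ(M,\dbZ)$ of the second map is $\isom\dbZ^r$; hence $N$ embeds into a free $\dbZ$-module of finite rank and is therefore itself free, of some rank $s\le r$. Then $\Hom_\dbZ(N,\dbZ)\isom\dbZ^s$, and the injection $M\hookrightarrow\Hom_\dbZ(N,\dbZ)$ becomes an embedding $\dbZ^r\hookrightarrow\dbZ^s$; applying $-\tensor_\dbZ\mathbb{Q}$ forces $r\le s$. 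Therefore $s=r$, and $N=H^2(F/F^{(n,0)},\dbZ)_{n-{\rm Massey}}$ is free of rank $l_{|X|}(n)$.

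I do not expect any serious obstacle: once Theorem \ref{duality} is available the deduction is formal. The only place the hypotheses genuinely intervene is the finiteness of $r$ (i.e.\ of $X$), which is what makes $\Hom_\dbZ(M,\dbZ)$ free, so that its submodule $N$ is free as well. If one wished to include an infinite basis $X$ under the convention $l_\infty(n)=\infty$, this step would fail --- the $\dbZ$-dual of $M$ is then a Baer--Specker-type product rather than a free module --- and one would reduce to the finite case by restricting along the inclusions $F_{X'}\hookrightarrow F_X$ for finite $X'\subseteq X$ and passing to a limit.
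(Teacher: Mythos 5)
Your proposal is correct and follows the paper's route exactly: the paper likewise deduces the corollary immediately from the non-degenerate pairing of Theorem \ref{duality} (with $R=\dbZ$, $F_{(n,\dbZ)}=F^{(n,0)}$) together with the Magnus--Witt fact that $F^{(n,0)}/F^{(n+1,0)}$ is free of rank $l_{|X|}(n)$, leaving the routine PID argument (both adjoints injective, so $N\hookrightarrow\dbZ^r$ gives $s\le r$ and $M\hookrightarrow\dbZ^s$ gives $r\le s$) unstated. Your extra remark about infinite $X$ is a sensible caveat the paper glosses over, but it does not change the comparison.
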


For $n=2$ the quotient $\bar F=F/F^{(2,0)}$ is the free abelian group on basis $X$.
Then Corollary \ref{rank of H2} recovers the well-known fact that the image of the cup product map $H^2(\bar F,\dbZ)\tensor_\dbZ H^2(\bar F,\dbZ)\to H^2(\bar F,\dbZ)$ is a free $\dbZ$-module of rank $l_{|X|}(2)=\binom{|X|}2$ ($\infty$ if $X$ is infinite).
Indeed, the $2$-fold Massey product is the cup product, and the cohomology ring $H^*(\bar F,\dbZ)$ is the exterior algebra over $H^1(\bar F,\dbZ)\isom\oplus_X\dbZ$.
\end{exam}

\begin{exam}
\rm
Let $R=\dbF_p$ with $p$ prime.
Recall that, $F_{(n,\dbF_p)}=\mu_{\dbF_p}\inv(1+\grd_{\dbF_p}^n)$ is the $n$th term $F_{(n,p)}$ in the $p$-Zassenhaus filtration of $F$ (see (I)$_p$ of the Introduction).
We obtain for $n\geq2$ a non-degenerate bilinear map
\[
F_{(n,p)}/F_{(n+1,p)}\times H^2(F/F_{(n,p)},\dbF_p)_{n-{\rm Massey}}\to\dbF_p.
\]
A profinite  variant of this result  was proved in \cite{Efrat14a}*{Th.\ B}.

\end{exam}

\begin{bibdiv}
\begin{biblist}

\bib{BogomolovTschinkel12}{article}{
author={Bogomolov, Fedor},
author={Tschinkel, Yuri},
title={Introduction to birational anabelian geometry},
book={editor={L.\ Caporaso et al},
          title={In: ``Current Developments in Algebraic Geometry"},
          series={MSRI Publications},
               volume={59},
              date={2012},
              publisher={Cambridge Univ. Press},
                           },
         pages={17\ndash 63},
label={BT12},
}

\bib{ChandlerMagnus82}{book}{
author={Chandler, Bruce},
author={ Magnus, Wilhelm},
title={The History of Combinatorial Group Theory: a case study of the history of ideas},
series={Studies in the History of Mathematics and Physical Sciences},
volume={9},
publisher={Springer-Verlag},
date={1982},
}

\bib{CheboluEfratMinac12}{article}{
author={Chebolu, Sunil K.},
author={Efrat, Ido},
author={Min\' a\v c, J\'an},
title={Quotients of absolute Galois groups which determine the entire Galois cohomology},
journal={Math.\ Ann.},
volume={352},
date={2012},
pages={205\ndash221},
}

\bib{DixonDuSautoyMannSegal99}{book}{
    author={Dixon, J.D.},
	author={Du Sautoy, M.P.F.},
    author={Mann, A.},
	author={Segal, D.},
    title={Analytic Pro-$p$ Groups},
   date={1999},
   publisher={Cambridge Univ.\ Press},
   place={Cambridge},
   label={DDMS99},
   }

\bib{Dwyer75}{article}{
   author={Dwyer, William G.},
   title={Homology, Massey products and maps between groups},
   journal={J. Pure Appl. Algebra},
   volume={6},
   date={1975},
   pages={177\ndash190},
 }

\bib{Efrat14a}{article}{
   author={Efrat, Ido},
   title={The Zassenhaus filtration, Massey products, and representations of profinite groups},
   journal={Adv. Math.},
   volume={263},
   date={2014},
   pages={389--411},
}

\bib{Efrat14b}{article}{
   author={Efrat, Ido},
   title={Filtrations of free groups as intersections},
   journal={Arch. Math. (Basel)},
   volume={103},
   date={2014},
   pages={411--420},
}

\bib{EfratMinac11a}{article}{
author={Efrat, Ido},
author={Min\' a\v c, J\'an},
title={On the descending central sequence of absolute Galois groups},
journal={Amer.\ J.\ Math.},
volume={133},
date={2011},
pages={1503\ndash1532},
}

\bib{EfratMinac11b}{article}{
   author={Efrat, Ido},
   author={Min\'a\v c, J\'an},
   title={Galois groups and cohomological functors},
   journal={Trans.\ Amer.\ Math.\ Soc.},
   date={2016},
   status={to appear},
   eprint={arXiv:1103.1508 }
 }

\bib{Fenn83}{book}{
author={Fenn, R. A.},
title={Techniques of Geometric Topology},
series={London Math.\ Soc.\ Lect.\ Notes Series},
volume={57},
publisher={Cambridge Univ.\ Press},
place={Cambridge},
date={1983},
}

\bib{Grun36}{article}{
author={Gr\"un, Otto},
title={\"Uber eine Faktorgruppe freier Gruppen I},
journal={Deutsche Math.},
volume={1},
date={1936},
pages={772\ndash 782},}

\bib{Jennings41}{article}{
    author={Jennings, S. A.},
     title={The structure of the group ring of a $p$-group over a modular field},
     journal={Tran. Amer. Math. Soc.},
     volume={50},
     date={1941},
     pages={169\ndash187},
}

\bib{Koch60}{article}{
    author={Koch, H.},
     title={\"Uber die Faktorgruppen einer absteigenden Zentralreihe},
     journal={Math.\ Nach.},
     volume={22},
     date={1960},
     pages={159\ndash161},
}

\bib{Koch02}{book}{
    author={Koch, Helmut},
     title={Galois Theory of $p$-Extensions},
      date={2002},
	publisher={Springer}
}

\bib{Kraines66}{article}{
author={Kraines, David},
title={Massey higher products},
journal={Trans.\ Amer.\ Math.\ Soc.},
volume={124},
date={1966},
pages={431\ndash449},
}

\bib{Lazard54}{article}{
   author={Lazard, Michel},
   title={Sur les groupes nilpotents et les anneaux de Lie},
   journal={Ann. Sci. Ecole Norm. Sup. (3)},
   volume={71},
   date={1954},
   pages={101\ndash190},
 }

\bib{Lazard65}{article}{
   author={Lazard, M.}
   title={Groupes analytiques $p$-adiques},
   journal={Inst. Hautes \'Etudes Sci., Publ. Math.},
   colume={26},
   date={1965},
   pages={389\ndash603},
}


\bib{Magnus35}{article}{
   author={Magnus, Wilhelm},
   title={Beziehungen zwischen Gruppen und Idealen in einem speziellen Ring},
   journal={Math. Ann. 111},
   date={1935},
   pages={259--280},
}

\bib{Magnus37}{article}{
   author={Magnus, Wilhelm},
   title={\"Uber Beziehungen zwischen h\"oheren Kommutatoren},
   journal={J.\ reine angew.\ Math.},
   volume={177},
   date={1937},
   pages={105--115},
}

\bib{MinacTan15}{article}{
author={Min\'a\v c, J\'an},
author={T\^an, Nguyen Duy},
title={The kernel unipotent conjecture and the vanishing of Massey products for odd rigid fields},
note={with an appendix by I.\ Efrat, J.\ Min\'a\v c, and N.D.\ T\^an},
journal={Adv.\ Math.},
volume={273},
date={2015},
pages={242\ndash270},
}

\bib{Morishita04}{article}{
author={Morishita, Masanori},
title={Milnor invariants and Massey products for prime numbers},
journal={Compos.\ Math.},
volume={140},
year={2004},
pages={69--83},
}

\bib{Morishita12}{book}{
   author={Morishita, Masanori},
   title={Knots and Primes: An Introduction to Arithmetic Topology},
   series={Universitext},
   publisher={Springer, London},
   date={2012},
   pages={xii+191},
}

\bib{NeukirchSchmidtWingberg}{book}{
   author={Neukirch, J.},
   author={Schmidt, A.},
   author={Wingberg, K.},
   title={Cohomology of Number Fields},
   date={2008},
   publisher={Springer}
}

\bib{Quillen68}{article}{
   author={Quillen, Daniel G.},
   title={On the associated graded ring of a group ring},
   journal={J.\ Algebra},
   volume={10},
   date={1968},
   pages={411--418},
}

\bib{Rohl85}{article}{
   author={R\"ohl, Frank},
   title={Review and some critical comments on a paper of Gr\"un concerning the dimension subgroup conjecture},
   journal={Bol.\ Soc.\ Braz.\ Mat.},
   volume={16},
   date={1985},
   pages={11\ndash27},
}

\bib{SerreLie}{book}{
    author={Serre, Jean-Pierre},
     title={Lie Algebras and Lie Groups},
      publisher={Springer}
      date={1992},
      }

\bib{Skopin50}{article}{
   author={Skopin, A. I.},
   title={The factor groups of an upper central series of free groups},
   language={Russian},
   journal={Doklady Akad. Nauk SSSR (N.S.)},
   volume={74},
   date={1950},
   pages={425--428},
}

\bib{Topaz12}{article}{
author={Topaz, Adam},
title={Commuting-liftable subgroups of Galois groups II},
journal={J.\ reine angew.\ Math.},
date={2012},
eprint={arXiv:1208.0583v5},
doi={10.1515/crelle-2014-0115},
}

\bib{Vogel05}{article}{
   author={Vogel, Denis},
   title={On the Galois group of 2-extensions with restricted ramification},
   journal={J. reine angew. Math.},
   volume={581},
   date={2005},
   pages={117--150},
}
	
\bib{Weibel94}{book}{
author={Weibel, Charles A.},
title={An Introduction to Homological Algebra},
series={Cambridge Studies in Advanced Mathematics},
volume={38},
publisher={Cambridge University Press},
place={Cambridge},
year={1994},
}

\bib{Witt37}{article}{
   author={Witt, Ernst},
   title={Treue Darstellungen Liescher Ringe},
    journal={J.\ reine angew. Math.},
   volume={177},
   date={1937},
   pages={152\ndash160},
 }

\bib{Zassenhaus39}{article}{
   author={Zassenhaus, H.},
   title={Ein Verfahren, jeder endlichen p-Gruppe einen Lie-Ring mit Charakteristik $p$ zuzuorden},
    journal={Abh.\  Math.\ Sem.\ Univ.\ Hambg.},
   volume={13},
   date={1939},
   pages={200--206},
 }

\end{biblist}
\end{bibdiv}
\end{document}